\newcounter{foo}
 \newtheorem{theorem}{Theorem}[section]
 \newtheorem{corollary}[theorem]{Corollary}
\newtheorem{conjecture}[theorem]{Conjecture}
 \newtheorem{lemma}[theorem]{Lemma}
 \newtheorem{example}[theorem]{Example}
 \newtheorem{characterization}[foo]{Characterization}
 \theoremstyle{remark}
 \theoremstyle{definition}
 \newtheorem{definition}[theorem]{Definition}
\newcommand{\citep}[1]{\cite{#1}}
\DeclareMathAlphabet{\mathpzc}{OT1}{pzc}{m}{it}
\DeclareMathOperator{\minanz}{minanz}
\def\mathpzc{\mathsf}
\newcommand{\GETOUT}[1]{}
\newcommand{\diag}{\mathsf{diag}}
\newcommand{\Parking}{\mathsf{Park}}
\newcommand{\decomp}{\mathsf{decomp}}  
\newcommand{\inc}{\mathsf{inc}}
\newcommand{\sperm}{\mathsf{smperm}}
\newcommand{\CanonTop}{\mathsf{CanonTop}}
\newcommand{\canontop}{\mathsf{canontop}}
\newcommand{\Bounce}{\mathsf{Bounce}}
\newcommand{\hBounce}{\mathsf{HagBounce}}
\newcommand{\hbounce}{\mathsf{hagbounce}}
\newcommand{\bounce}{\mathsf{bounce}}
\newcommand{\ebounce}{\mathsf{ebounce}}
\newcommand{\obounce}{\mathsf{obounce}}
\newcommand{\bounceWeight}{\mathsf{parabounce}}
\newcommand{\type}{\mathsf{type}}
\newcommand{\BiComp}{\mathsf{BiComp}}
\newcommand{\mtri}{\includegraphics{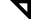}}
\newcommand{\UTBiComp}{\mathsf{BiComp}^{\mtri}}
\newcommand{\Rec}{\mathsf{Rec}}
\newcommand{\RecInc}{\mathsf{Rec}^{\star}}
\newcommand{\RecDec}{\mathsf{Rec}^{\dagger}}
\newcommand{\Stable}{\mathsf{Stable}}
\newcommand{\SqRec}{\mathsf{SqRec}}
\newcommand{\THSqRec}{{\mathsf{SqRec}}^{\rightarrow}}
\newcommand{\upperp}{\mathsf{upper}}
\newcommand{\lowerp}{\mathsf{lower}}
\newcommand{\Mat}[1]{\left(\begin{smallmatrix} #1 \end{smallmatrix}\right)}
\newcommand\nn{\mathsf{n}}
\newcommand\sss{\mathsf{s}}
\newcommand\ee{\mathsf{e}}
\newcommand\ww{\mathsf{w}}
\newcommand{\R}{\mathbb{R}}
\newcommand{\PP}{\mathcal{P}}
\newcommand\Para{\mathpzc{Para}}
\newcommand\Ribbon{\mathpzc{Ribbon}}
\newcommand\SetPara{\mathpzc{SetPara}}
\newcommand\Dyck{\mathpzc{Dyck}}
\newcommand{\BSP}{\mathpzc{LowerPara}}
\newcommand{\dyck}{\mathsf{dyck}}
\newcommand{\Myshape}[4]{\Lambda_{#3 , #4}(#1,#2)}
\newcommand\ttm{\phi}       
\newcommand{\RtoSP}{\Omega} 
\newcommand{\mtp}{\mathsf{mtp}}
\newcommand{\Posets}{\mathsf{Posets}}
\newcommand\area{\mathsf{area}}
\newcommand\uarea{\mathsf{uarea}}
\newcommand\larea{\mathsf{larea}}
\newcommand{\level}{\mathsf{level}}
\newcommand{\sgdistribution}{F}
\newcommand{\sgdistributionHaglund}{H}
\newcommand{\sgdistributionSticked}{S}
\newcommand{\diagonalLengths}{\mathsf{diaglen}}
\newcommand{\wave}{\mathsf{wave}}
\begin{document}
 \title[Polyominoes, the sandpile model, and a $q,t$-Narayana polynomial]{Parallelogram polyominoes, the sandpile model on a complete bipartite graph, and a $q,t$-Narayana polynomial}
 \author{Mark Dukes and Yvan Le Borgne}
 \address{Department of Computer and Information Sciences, University of Strathclyde, Glasgow G1 1XH, United Kingdom.}
 \address{CNRS, LaBRI, Universit\'e Bordeaux 1, 351 cours de la Lib\'eration, 33405 Talence cedex, France.}

\begin{abstract}
We classify recurrent configurations of the sandpile model on the complete bipartite graph $K_{m,n}$ in which one designated vertex is a sink.
We present a bijection from these recurrent configurations to decorated parallelogram polyominoes whose bounding box is a $m \times n$ rectangle. 
Several special types of recurrent configurations and their properties via this bijection are examined. 
For example, recurrent configurations whose sum of heights is minimal are shown to correspond to polyominoes of least area.
Two other classes of recurrent configurations are shown to be related to bicomposition matrices, a matrix analogue of set partitions,
and (2+2)-free partially ordered sets.

A canonical toppling process for recurrent configurations gives rise to a path within the associated parallelogram polyominoes.
This path bounces off the external edges of the polyomino, and is reminiscent of Haglund's well-known bounce statistic for Dyck paths.
We define a collection of polynomials that we call $q,t$-Narayana polynomials, 
defined to be the generating function of the bistatistic $(\area,\bounceWeight)$ on the set of parallelogram polyominoes, 
akin to the $(\area,\hbounce)$ bistatistic defined on Dyck paths in Haglund (2003). 
In doing so, we have extended a bistatistic of Egge et al. (2003) 
to the set of parallelogram polyominoes.
This is one answer to their question concerning extensions to other combinatorial objects.

We conjecture the $q,t$-Narayana polynomials to be symmetric and prove this conjecture for numerous special cases.
We also show a relationship between Haglund's $(\area,\hbounce)$ statistic on Dyck paths, and our bistatistic 
$(\area,\bounceWeight)$ on a sub-collection of those parallelogram polyominoes living in a $(n+1)\times n$ rectangle.
\end{abstract}
\maketitle

\section{Introduction}
The abelian sandpile model \citep{Dhar:2006} is a discrete diffusion model whose states are distributions of grains on the vertices of a general directed graph.
A vertex of a graph is {\it{stable}} if the number of grains at the vertex is strictly smaller than its out-degree, otherwise it is called {\it{unstable}}.
In addition to a randomised addition of grains, 
the dynamics of this model requires that an unstable vertex donate a grain to each of its direct successors. This is the so-called toppling or avalanche process of the model.
This process defines a Markov chain on stable states. 
In the particular case of graphs, there exist many bijections from recurrent states to spanning trees 
of the same graph ~\citep{Bernardi:2008,ChebikinPylyavskyy:2005,CoriLeBorgne:2003,Dhar:2006}.
Another rich result concerning the model shows that the recurrent states, together with a certain binary operation, form an abelian group called the sandpile group (see e.g. ~\cite{CoriRossin:2000}).

In this paper we classify recurrent configurations of the sandpile model on the complete bipartite graph $K_{m,n}$ in which a single designated vertex is a sink.
Cori and Poulalhon~\citep{CoriPoulalhon:2002} classified recurrent configurations of the complete $(k+1)$-partite graph 
$K_{p_1,\ldots,p_k,1}$ where the sink is the single vertex that is connected to all other vertices.
Their paper showed that such recurrent configurations can be related to a generalisation of parking functions, 
and also exhibited a rich connection to a {\L}ukasiewicz language which aided in the enumeration of their new generalised parking functions.

This paper complements their paper by breaking the symmetry of $K_{p,q,1}$ and showing 
how recurrent configurations of $K_{m,n}$ that have one sink can be interpreted as parallelogram (or staircase) polyominoes.
We study several different types of recurrent configurations, which are illustrated in Figure~\ref{SPECIAL:TYPES}.
We show that minimal recurrent configurations (those whose sum of heights is minimal) correspond 
to ribbon parallelogram polyominoes (those having minimal area). 
Almost non-zero configurations are configurations in which one distinguished vertex is allowed to be empty, and all others are non-empty.

Bicomposition matrices are defined to be square matrices of sets whose non-empty entries partition the set $\{1,\ldots,n\}$ 
and for which there are no rows or columns consisting of only empty sets.
We show that minimal almost non-zero (`minanz') recurrent configurations on $K_{n,n}$ are in one-to-one correspondence with bicomposition matrices.
A special class of these minanz recurrent configurations are in one-to-one correspondence with upper triangular bicomposition matrices.
Such matrices were the subject of several recent papers that presented surprising connections between five seemingly disparate combinatorial objects: (2+2)-free posets, Stoimenow matchings,
ascent sequences, permutations avoiding a length-3 bivincular pattern, and a class of upper triangular matrices ~\cite{BousquetMelouUs:2010,DukesParviainen:2010}. 
We present the composition of this new correspondence and the correspondence between matrices and posets to show how the heights of a configuration can be read from the corresponding poset.
It was through these structures that we first noticed a connection to the sandpile model and the initial motivation behind this paper.

During the last two decades, a series of papers have examined a power series $C_n(q,t)$ 
that has become known as the $q,t$-Catalan function (or polynomial).
This power series was introduced by Garsia and Haiman~\cite{GarsiaHaiman:1996} and has important links to algebraic geometry and representation theory. 
In the original paper they showed that two special cases of this polynomial had combinatorial significance. 
The first was that $C_n(q,1)$ was the generating function of the area statistic over Dyck paths having semi-length $n$. 
The second was that $q^{n \choose 2}C_n(q,1/q)$ is the $n$th $q$-Catalan number.

Haglund introduced a new statistic `bounce' (we will call this $\hbounce$ so as not to confuse it with another statistic) and conjectured that 
$C_n(q,t)$ was the generating function of the bistatistic $(\area,\hbounce)$ on the set of all Dyck path of semi-length $n$.
Garsia and Haglund~\cite{GarsiaHaglund:2001,GarsiaHaglund:2002} proved this conjecture using methods from the theory of symmetric functions.
(See Haglund~\citep{Haglund:2008} for a concise overview of these results.)
Egge, Haglund, Kremer and Killpatrick~\citep{EggeHaglundKremerKillpatrick:2003} asked if the lattice path statistics for $C_n(q,t)$ can be extended, 
in a way which preserves the rich combinatorial structure, to related combinatorial objects. 

We present a pair of statistics $(\area,\bounceWeight)$ on parallelogram polyominoes which is one answer to their question.
We call the resulting polynomials $q,t$-Narayana polynomials since they specialise to the Narayana numbers for the case $q=t=1$.
The bivariate generating function for this pair of statistics $F_{m,n}(q,t)$  appears to be symmetric in both $q,t$ and $m,n$.
We conjecture and discuss the symmetry of this bistatistic and prove it for numerous special cases.

The outline of the paper is as follows:
Section \ref{SEC:2} defines parallelogram polyominoes and some notation related to these objects.
Section \ref{SEC:3} attends to the classification of recurrent configurations and a proves a bijection from this set to a set of decorated parallelogram polyominoes.
Section \ref{SEC:4} looks at several different types of (minimal) recurrent configurations and their relationship to other structures.
The different collections we look at are illustrated and summarised in Figure~\ref{SPECIAL:TYPES}.
In Section \ref{SEC:5} we define the $q,t$-Narayana polynomials and investigate their symmetry.

\section{Parallelogram polyominoes -- background and definitions}
\label{SEC:2}
First we will define parallelogram polyominoes and some aspects thereof.
A {\it{parallelogram polyomino}} is a polyomino such that the intersection with every line of slope $-1$ is a connected segment. 
Parallelogram polyominoes may be described in several ways and in this paper we will see three distinct ways of doing this with each serving a different purpose.
Let us write $\Para_{m,n}$ for the collection of parallelogram polyominoes whose bounding rectangle is $[0,m] \times [0,n]$.
These polyominoes have been studied in several papers, see for example \citep{BousquetMelou:1996,LerouxRassart:2001}.
Every polyomino $\PP \in \Para_{m,n}$ can be uniquely described by a pair 
$(\upperp(\PP),\lowerp(\PP))$ of paths which begin at $(0,0)$, end at $(m,n)$, 
take steps in the set $\{\nn = (0,1),\ee=(1,0)\}$, and only touch at their endpoints.

Given $\PP \in \Para_{m,n}$, let $\area(\PP)$ be the area of the finite region enclosed by the upper and lower paths defining $\PP$. 
Let us define $\uarea(\PP)$ and $\larea(\PP)$ to be the areas of the regions above and below $\PP$ in its bounding $m\times n$ rectangle, 
so $\larea(\PP)+\area(\PP)+\uarea(\PP) = mn$.
Let $\Ribbon_{m,n}$ be the set of polyominoes $\PP$ in $\Para_{m,n}$ whose area is $m+n-1$, the minimal value,
which we will call {\it{ribbon polyominoes}}.

\begin{example}\label{exampleoneone}
The first polyomino $\PP_1$ in Figure~\ref{threeexamples} is a parallelogram polyomino. 
It can be described by the pair of paths $(\upperp(\PP_1),\lowerp(\PP_1)) = ( \nn\ee\nn\ee\ee\nn\ee\ee\nn\ee\ee , \ee\ee\ee\nn\ee\nn\ee\ee\nn\ee\nn)$.
The second polyomino $\PP_2$ is not a parallelogram polyomino, as is witnessed by the two disjoint segments the dashed line of slope $-1$ meets.
Polyomino $\PP_3$ is a ribbon (parallelogram) polyomino.
\begin{figure}
\centerline{
	\begin{tabular}{c}
	\includegraphics{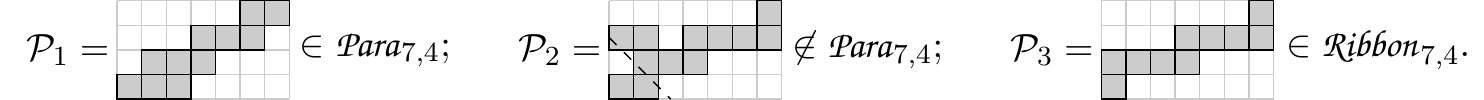}
	\end{tabular}
}
\caption{\label{threeexamples} Three examples.}
\end{figure}
\end{example}

Given $\PP \in \Para_{m,n}$, let $\Bounce(\PP)$ be the unique path from $(m-1,n)$ to $(0,0)$ which is defined as follows: 
Starting from $(m-1,n)$, the path goes south until it encounters a vertex on the lower path of $\PP$. 
The path then turns to the west and continues straight until it encounters a vertex on the upper path of $\PP$.
The path turns again to the south until it encounters a vertex on the lower path of $\PP$, and so on, until it reaches $(0,0)$.

Let $\bounce(\PP)=(c_1,c_2,\ldots)$ be the sequence of numbers where $c_1$ is the number of initial south $\sss$ steps in 
$\Bounce(\PP)$, $c_2$ is the number of contiguous west steps that follow the initial run of south steps in $\Bounce(\PP)$, and so forth.

\begin{example} \label{bpexamp}
The bounce path is indicated by a thick directed line in the following diagrams:
	\begin{center}
	\begin{tabular}{c}
	\includegraphics{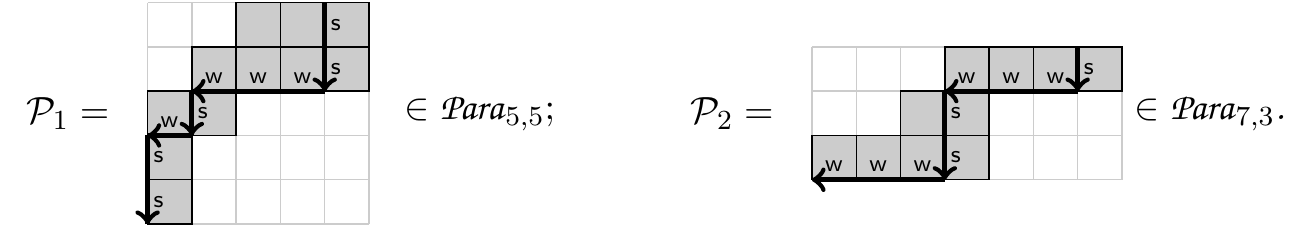}
	\end{tabular}
	\end{center}
For $\PP_1$ we have
$\Bounce(\PP_1)=(\sss,\sss,\ww,\ww,\ww,\sss,\ww,\sss,\sss)$ and $\bounce(\PP_1)=(2,3,1,1,2)$.
For $\PP_2$ we have 
$\Bounce(\PP_2)=(\sss,\ww,\ww,\ww,\sss,\sss,\ww,\ww,\ww)$ and $\bounce(\PP_2)=(1,3,2,3)$.
\end{example}
In the appendix we give two characterisations of parallelogram polyominoes.
Characterisation~\ref{bpcharact} is in terms of the bounce path of a polyomino. 
Characterisation~\ref{seqcharact} is in terms of the two integer sequences from which a polyomino may be constructed.

\section{The sandpile model on $D_{m,n}$ and recurrent configurations}
\label{SEC:3}
In this section we show how recurrent configurations of the sandpile model~\citep{Dhar:2006} 
on $K_{m,n}$ which has a designated vertex that acts as a sink can be classified in terms of parallelogram polyominoes. 
In what follows we will refer to the directed bipartite graph $K_{m,n}$ that has a designated vertex, which we will call the {\it{sink}}, as $D_{m,n}$.
The purpose in doing so is to avoid confusion about two non-equivalent choices for the sink in $K_{m,n}$ (in general if we have many sinks then we merge them into a single one).


For a general (stable) configuration $u$ on $D_{m,n}$ we define a collection of cells $f_{m,n}(u) \subseteq [0,m]\times [0,n] \subseteq \R^2$.
Next we define a canonical toppling process for checking recurrent configurations (using Lemma~\ref{knuthcori}) for the graph $D_{m,n}$ and 
go on to show that a configuration being recurrent is equivalent to $f_{m,n}(u)$ being a parallelogram polyomino in $\Para_{m,n}$.
In addition, we show the canonical toppling process of a recurrent configuration to be intimately linked with the bounce path of the corresponding polyomino $f_{m,n}(u)$.

Let $D_{m,n}$ be the directed graph $D_{m,n}(V,E)$ with vertex set $V=\{v_0,\ldots,v_{m+n-1}\}$ and a directed edge set
\[ E = \{ (v_i,v_j) \, :\, \min (i,j) \in \{0,\ldots,m-1\} \mbox{ and } \max(i,j) \in \{m,\ldots,m+n-1\} \}.\]
This is illustrated in Figure~\ref{the:graph}.
We call $\{v_0,\ldots,v_{m-1}\}$ and $\{v_m,\ldots,v_{m+n-1}\}$
the {\it{top}} vertices and {\it{bottom}} vertices of $D_{m,n}$, respectively. 

\begin{figure}
\centerline{\includegraphics{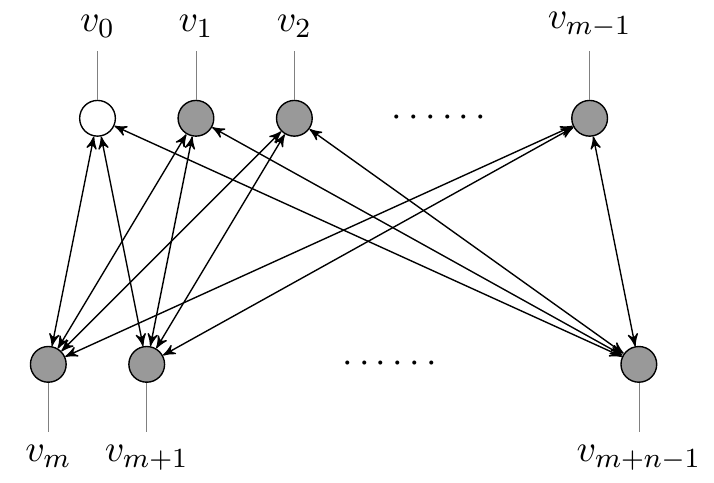}}
\caption{\label{the:graph} The graph $D_{m,n}$.}
\end{figure}

A {\it{configuration}}, or {\it{state}}, $x$ on $D_{m,n}$ is an assignment $$x:\{1,\ldots,m+n-1\} \to \mathbb{N}_0=\mathbb{N} \cup \{0\},$$
the collection of non-negative integers.
The value $x_i = x(i)$ represents the number of grains of sand on vertex $v_i$.
Vertex $v_0$ is treated as the {\it{sink}} and the number of grains of sand on this vertex is generally ignored.
There is, however, one benefit to considering the number of grains at the sink and this will be mentioned in the proof of Theorem~\ref{main:thm}.

Let $e_{ij}=1$ if $(v_i,v_j) \in E$, and 0 otherwise. Let $d_i$ be the total out-degree of vertex $v_i$. 
The toppling aspect of the sandpile model is as follows:
Suppose $x=(x_1,\ldots,x_{m+n-1})$ is a state on $D_{m,n}$.
If $x_i \geq d_i$ for some $i\geq 1$, then let $y=(y_1,\ldots,y_{m+n-1})$ be the state with
$y_i=x_i-d_i$ and set $y_j =  x_j+e_{ij}$ for all $j\neq i$. 
This is denoted $x \stackrel{i}{\rightarrow} y$ and means that in configuration $x$ the unstable vertex $v_i$ topples leading to the configuration $y$.

One continues this toppling procedure until $x_i<d_i$ for all $i \in \{1,\ldots,m+n-1\}$. 
(The order of the topplings does not matter, see e.g. Dhar~\cite{Dhar:2006}.) 
Such a state $(x_1,\ldots,x_{m+n-1})$ is called {\it{stable}}.
Let $\Stable(D_{m,n})$ be the set of stable states on $D_{m,n}$. 
For example, it is easy to see that $|\Stable(D_{m,n})|=n^{m-1}m^n$.

Let us call a configuration $u \in \Stable(D_{m,n})$ {\it{increasing}} if $u_1\leq \cdots \leq u_{m-1}$ and $u_m\leq \cdots \leq u_{m+n-1}$. 
Every configuration $u \in \Stable(D_{m,n})$ may be written uniquely as a pair $\decomp_{m,n}(u)=(\inc_{m,n}(u),\sperm_{m,n}(u))$,
where $\inc_{m,n}(u)=u'$ is an increasing configuration, $\sperm_{m,n}(u)=\pi$ is the lexicographically smallest permutation such 
that $\pi:\{1,\ldots ,m-1\}\to\{1,\ldots,m-1\}$, $\pi:\{m,\ldots,m+n-1\} \to \{m,\ldots,m+n-1\}$, and $u_i=u'_{\pi(i)}$ for all $i \in \{1,\ldots,m+n-1\}$.

\begin{example}
Consider $u=(5,2,1,2,4,2,1,3,2,2,1) \in \Stable(D_{5,7})$.
We have $\inc_{5,7}(u)=(1,2,2,5,1,1,2,2,2,3,4)$ and 
$\sperm_{5,7}(u) =(4,2,1,3,11,7,5,10,8,9,6).$
\end{example}

Let $\sigma(x)$ be the stable state that results from initial state $x$.
A stable state is {\it{recurrent}} if it is $\sigma(x)$ for some $x$ with $x_i \geq d_i$ for all $i\geq 1$. 
Let $\Rec(D_{m,n})$ be the set of recurrent states on $D_{m,n}$ and let $\RecInc(D_{m,n})$ be the set of increasing recurrent states on $D_{m,n}$.
Recurrent states may be classified for general directed graphs by the following lemma (see \citep{Knuth:2011} or \citep{CoriPoulalhon:2002,Dhar:2006}).

\begin{lemma}\label{knuthcori}
Let $D$ be a directed graph on the vertices $V_0,V_1,\ldots,V_n$ with $e_{ij}$ arcs from $V_i$ to $V_j$, and $e_{ii}=0$. 
Let $D$ be balanced (the in-degree of each vertex equals its out-degree).
Then the state $x=(x_1,\ldots,x_n)$ is recurrent iff $x=\sigma(x+a')$ where $a'=(e_{01},\ldots ,e_{0n})$.
Furthermore, in toppling from $x+a'$ to $x$, every vertex $V_1,\ldots,V_n$ topples exactly once.
\end{lemma}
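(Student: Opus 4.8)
The plan is to recast the toppling dynamics as linear algebra over the reduced Laplacian $\widetilde L$ of $D$ on the non-sink vertices $V_1,\dots,V_n$, where $\widetilde L_{ii}=d_i$ and $\widetilde L_{ij}=-e_{ij}$ for $i\neq j$. Firing $V_j$ once changes a configuration by subtracting the $j$th row of $\widetilde L$ (regarded as a coordinate vector), while firing the sink $V_0$ once adds $a'=(e_{01},\dots,e_{0n})$. The sole role of the hypothesis is to supply the identity $\widetilde L^{\mathsf T}\mathbf 1=a'$, where $\mathbf 1=(1,\dots,1)$: every row of the full Laplacian sums to $0$ automatically, so ``balanced'' is exactly the statement that every column of the full Laplacian sums to $0$, and reading that off on the non-sink columns gives $\widetilde L^{\mathsf T}\mathbf 1=a'$. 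Hence firing the sink once and then firing each of $V_1,\dots,V_n$ exactly once is the identity on configurations. I also record that $\widetilde L$ is nonsingular, since the sink is reachable from every vertex (immediate for $D_{m,n}$, and in general a consequence of $D$ being balanced and connected, hence strongly connected).

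Granting this, the two assertions ``$x=\sigma(x+a')$'' and ``each of $V_1,\dots,V_n$ topples exactly once in stabilising $x+a'$'' are equivalent, so it suffices to establish the first: letting $\mathbf n$ denote the (well-defined, by the abelian property) vector of toppling counts in stabilising $x+a'$, the final configuration is $(x+a')-\widetilde L^{\mathsf T}\mathbf n$, which equals the (stable) configuration $x$ iff $\widetilde L^{\mathsf T}\mathbf n=a'=\widetilde L^{\mathsf T}\mathbf 1$ iff $\mathbf n=\mathbf 1$, by nonsingularity. So the ``furthermore'' clause is automatic once the main equivalence is proved.

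For the forward implication I argue straight from the definition. Write $x=\sigma(w)$ with $w_i\geq d_i$ for all $i$. By the abelian property $\sigma(x+a')=\sigma(\sigma(w)+a')=\sigma(w+a')$, and in stabilising $w+a'$ I may begin by firing $V_1,\dots,V_n$ once each in any order: each is unstable in $w+a'$ (its height is at least $w_i\geq d_i$), and firing a vertex only adds grains to the others, so this is a legal toppling sequence; afterwards, by the identity of the first paragraph, the configuration is exactly $(w+a')-a'=w$. Therefore $\sigma(w+a')=\sigma(w)=x$, so $\sigma(x+a')=x$.

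The converse is the substantive part and the expected obstacle. One route is to invoke the ``forbidden sub-configuration'' criterion behind Dhar's burning algorithm~\citep{Dhar:2006}: $x$ is non-recurrent iff some non-empty $S\subseteq\{1,\dots,n\}$ satisfies $x_v<\#\{\text{arcs from }S\text{ into }v\}$ for every $v\in S$. Assuming $\sigma(x+a')=x$, each non-sink vertex topples exactly once (second paragraph); if $x$ were non-recurrent, pick such an $S$ and let $v\in S$ be the first of its vertices to topple, its height at that moment being $x_v$ plus $\sum e_{uv}$ over the earlier topplers $u$ (which include $V_0$ and, since $v$ is first in $S$, all lie in $\overline S$, each contributing once). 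That height is at most $x_v+\#\{\text{arcs from }\overline S\text{ into }v\}$, which by the forbidden inequality and balancedness is strictly less than $\#\{\text{arcs into }v\}=d_v$ --- contradicting that $v$ was unstable when fired. Hence $x$ is recurrent. Alternatively one can bypass Dhar's criterion: since $\widetilde L$ is a nonsingular $M$-matrix, the coset of $x+a'$ modulo the toppling lattice $\widetilde L^{\mathsf T}\mathbb Z^n$ contains a representative $w$ with $w_i\geq d_i$ for all $i$, and as stabilisation depends only on that coset, $\sigma(w)=\sigma(x+a')=x$, exhibiting $x$ as recurrent. Either way, the whole difficulty is concentrated in this converse --- importing the burning-algorithm characterisation, or carrying out the small $M$-matrix argument that produces $w$ --- while everything else is the bookkeeping above.
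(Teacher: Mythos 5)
The paper does not actually prove this lemma: it is quoted from the literature (Knuth; Cori--Poulalhon; Dhar), so there is no internal proof to compare your argument against. On its own terms, your Laplacian bookkeeping is correct and efficient: the identity $\widetilde L^{\mathsf T}\mathbf 1=a'$ is exactly what ``balanced'' buys, nonsingularity of $\widetilde L$ settles the ``furthermore'' clause, and your forward implication (write $x=\sigma(w)$ with $w\ge d$, use $\sigma(\sigma(w)+a')=\sigma(w+a')$, and fire every vertex once from $w+a'$ to return to $w$) is clean and complete.

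The issues are concentrated in the converse, exactly where you predicted. Your first route is sound as a reduction, but note that the half of the forbidden-subconfiguration criterion you actually need --- ``no forbidden subconfiguration implies recurrent'' --- is the nontrivial half, is of the same depth as the lemma itself, and in several standard treatments is \emph{derived from} the burning algorithm; to avoid circularity you must point to a proof of it that is independent (e.g.\ the counting argument matching allowed configurations against $\det\widetilde L$, or the bijection with spanning trees). Your second route contains a genuine error: stabilisation does \emph{not} depend only on the coset modulo $\widetilde L^{\mathsf T}\mathbb Z^n$. There are $\prod_i d_i$ stable configurations but only $\det\widetilde L$ cosets, so distinct stable configurations share a coset, and $\sigma$ fixes each of them; if $\sigma$ were constant on cosets, every stable configuration would be recurrent, which is false. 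What is true (the least action principle) is only that the toppling vector of $w=(x+a')+\widetilde L^{\mathsf T}m$ is bounded above by $m+\mathbf 1$ componentwise, with strict inequality precisely when $x$ is not the recurrent representative of its coset --- so $\sigma(w)$ lands on that recurrent representative rather than on $x$. Hence the ``alternative'' cannot be repaired by a small $M$-matrix argument, and the converse genuinely requires the forbidden-subconfiguration (or equivalent) machinery of your first route.
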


The above lemma applied to the graph $D_{m,n}$ gives us that a stable configuration 
$u\in\Stable(D_{m,n})$ is recurrent iff the Dhar criterion~\citep{Dhar:2006} holds:
\begin{align}\label{topeqn}
\sigma(u_1,\ldots,u_{m-1},1+u_m,\ldots,1+u_{m+n-1}) &= (u_1,\ldots,u_{m-1},u_m,\ldots,u_{m+n-1}).
\end{align}
The order of topplings is unimportant when checking that the Dhar criterion (\ref{topeqn}) holds.
However, for our purposes it will prove useful to fix a canonical toppling process as follows:
Let $Q_1$ be the set of all unstable vertices in the bottom row of $D_{m,n}$ (as a result of adding 1 to the height of each vertex in this bottom row).
Topple all vertices in $Q_1$ and then let $P_1$ be the set of all unstable vertices in the top row of $D_{m,n}$.
Topple all vertices in $P_1$ and then let $Q_2$ be the set of all unstable vertices in the bottom row of $D_{m,n}$.
Continue this process until all vertices of $D_{m,n}$ are stable. Let
\begin{align*}
\CanonTop(u)&=(Q_1,P_1,Q_2,P_2,\ldots), \mbox{ and }\\
\canontop(u)&=(|Q_1|,|P_1|,|Q_2|,|P_2|,\ldots).
\end{align*}
We note that it is possible for $\CanonTop(u)=\canontop(u)=()$, the empty sequence. 
Another observation is that if $\CanonTop(u)$ is the empty sequence, then $u$ is not recurrent.
\begin{example}
In this example we will be considering configurations $u=(u_1,\ldots,u_6)$ on the graph $D_{3,4}$.
The assignment of heights to vertices is illustrated in the following diagram. A star at vertex $v_0$ indicates the non-existent height of the sink.
\ \\[0.5em]
\centerline{\includegraphics{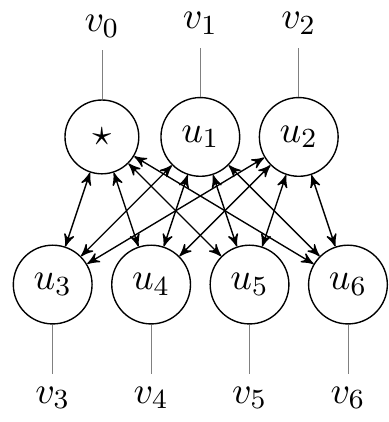}}
\begin{enumerate}
\item[(i)]
Consider $u=(2,0,2,1,0,2) \in \Stable(D_{3,4})$. 
When one grain is added to every vertex in the bottom row, we have the unstable configuration $w=(2,0,3,2,1,3)$. 
Alternate between toppling all unstable vertices in the bottom row and top row:
$$(2,0,3,2,1,3) \to (4,2,0,2,1,0) \to (0,2,1,3,2,1) \to (1,3,1,0,2,1) \in \Stable(D_{3,4}).$$
Since $\sigma(w)=(1,3,1,0,2,1)\neq u$, $u$ is not recurrent.
\ \\[0.5em]
\centerline{\includegraphics{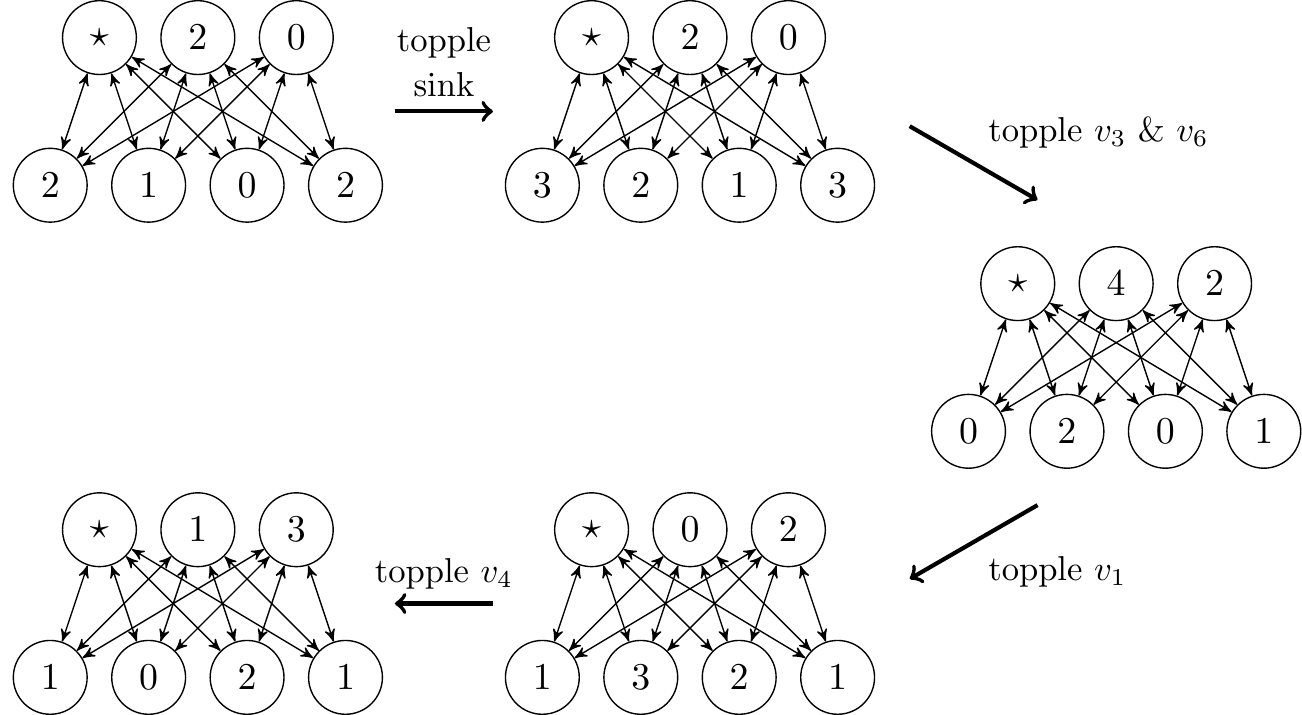}}
\ \\[0.5em]
Vertices $v_3$ and $v_6$ were first to topple in the bottom row, thus $Q_1=\{v_3,v_6\}$.
These caused vertex $v_1$ in the top row to become unstable, so $P_1=\{v_1\}$.
Once vertex $v_1$ toppled, it caused vertex $v_4$ in the bottom row to become unstable, so $Q_2=\{v_4\}$.
When vertex $v_4$ topples it does not cause any vertices in the top row to become unstable and the toppling process now ends.
We have $\CanonTop(u)=(\{3,6\},\{1\},\{4\})$.
\item[(ii)] Consider $u=(0,2,1,2,1,2)\in \Stable(D_{3,4})$. 
Adding one grain to every vertex in the bottom row gives $w=(0,2,2,3,2,3)$, which topples as follows:
$(0,2,2,3,2,3) \to (2,4,2,0,2,0) \to (2,0,3,1,3,1) \to (4,2,0,1,0,1) \to (0,2,1,2,1,2)$.
\ \\[0.5em]
\centerline{\includegraphics{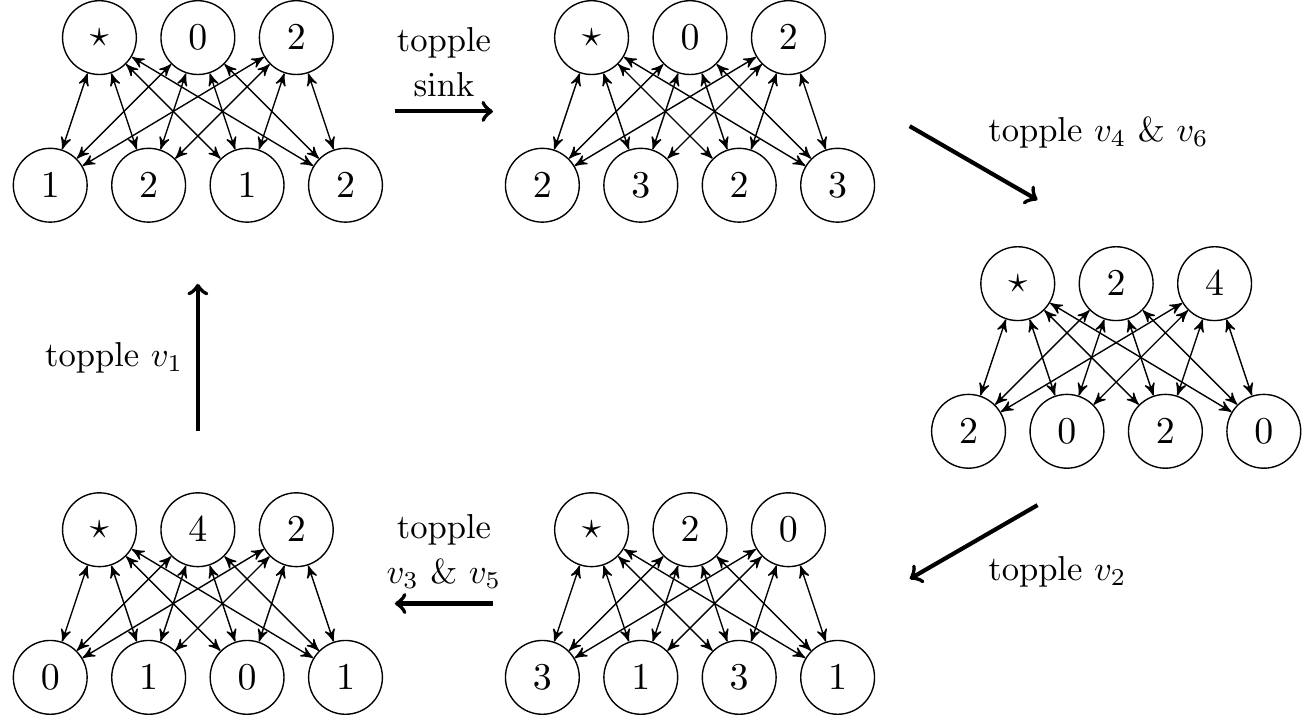}}
\ \\[0.5em]
Since $\sigma(w) =u$ we have that $u \in \Rec(D_{3,4})$.
The toppling process is $\CanonTop(u)=(\{4,6\},\{2\},\{3,5\},\{1\})$.
\end{enumerate}
\end{example}

In the next theorem we will classify the set $\Rec(D_{m,n})$, however some terminology is required first.

\begin{definition}
Given $u=(u_1,\ldots,u_{m+n-1}) \in \Stable(D_{m,n})$ with $\CanonTop(u)=(Q_1,P_1,Q_2,P_2,\ldots)$ and
$\inc_{m,n}(u)=(a_1,\ldots,a_{m-1},b_1,\ldots,b_n)$,
define $f_{m,n}$ to be the collection of cells in the plane given by;
\[ 
f_{m,n}(u) = \left([m-1,m]\times[0,n]\cup \bigcup_{i=1}^{m-1} [i-1,i]\times[0,1+a_i] \right)
\cap \left( \bigcup_{i=1}^{n} [0,1+b_i]\times [i-1,i] \right).
\]
\end{definition}

An alternative definition for $f_{m,n}(u)$ in terms of Young diagrams may be given as follows:

\begin{definition}
Given $u=(u_1,\ldots,u_{m+n-1}) \in \Stable(D_{m,n})$ with $\CanonTop(u)=(Q_1,P_1,Q_2,P_2,\ldots)$ and
$\inc_{m,n}(u)=(a_1,\ldots,a_{m-1},b_1,\ldots,b_n)$,
define $f_{m,n}(u)$ to be the intersection of the Young diagram whose corner at 
$(m,0)$ has heights $(n,1+a_{m-1},\ldots,1+a_1)$ from right to left, and the Young 
diagram whose corner at $(0,n)$ has widths $(1+b_n,\ldots,1+b_1)$ from top to bottom.
\end{definition}

\begin{example}
Given $u=(2,1,2,0,0,2,6,1,5,1) \in \Stable(D_{7,4})$, we have $$\inc_{7,4}(u)=(a_1,\ldots,a_6,b_1,\ldots,b_4)=(0,0,1,2,2,2,1,1,5,6).$$ 
The collections of cells $f_{7,4}(u)$ is\\[0.5em]
        \centerline{\includegraphics{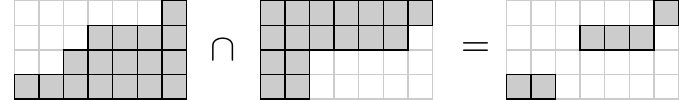}}
Since $f_{7,4}(u) \notin \Para_{7,4}$, the next theorem proves that $u$ is not recurrent.
\end{example}

\begin{theorem} \label{main:thm}
Let $u =(u_1,\ldots,u_{m+n-1}) \in \Stable(D_{m,n})$.
\begin{enumerate}
\item[(a)] $u \in \Rec(D_{m,n})$ iff $f_{m,n}(u) \in \Para_{m,n}$.
\item[(b)] If $u \in \Rec(D_{m,n})$ then $\bounce(f_{m,n}(u))=\canontop(u)$.
\end{enumerate}
\end{theorem}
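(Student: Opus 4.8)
The plan is to base the whole argument on Dhar's burning algorithm together with a direct reading of the columns of $f_{m,n}(u)$, proving (a) first and then (b).

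First I would normalise. Permuting the top vertices among themselves and the bottom vertices among themselves is an automorphism of $D_{m,n}$ fixing $v_0$, so it preserves recurrence and leaves $\canontop(u)$ unchanged, and $f_{m,n}(u)$ depends on $u$ only through $\inc_{m,n}(u)$; so I may assume $u=(a_1,\dots,a_{m-1},b_1,\dots,b_n)$ is increasing. Next I would observe that in the canonical toppling no non-sink vertex topples twice: once a bottom vertex $v_j$ has toppled, every further grain it gets comes from a top vertex that has not yet toppled, so its height can reach at most $b_j+1+(m-1)<2m$, never $2m$; the same holds for top vertices, and an induction on the stage makes this precise. Hence the canonical toppling coincides with the burning process started at $v_0$, and, writing $\canontop(u)=(c_1,c_2,\dots)$, one gets $c_1=\#\{j:b_j=m-1\}$ together with the inductive description $Q_k=\{v_j:b_j\ge m-1-(c_2+c_4+\cdots+c_{2k-2})\}\setminus(Q_1\cup\cdots\cup Q_{k-1})$ and $P_k=\{v_i:a_i\ge n-(c_1+c_3+\cdots+c_{2k-1})\}\setminus(P_1\cup\cdots\cup P_{k-1})$. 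By Lemma~\ref{knuthcori}, $u$ is recurrent iff this process burns every non-sink vertex.

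For (a) I would first turn ``$f_{m,n}(u)\in\Para_{m,n}$'' into a numerical condition. Column $c$ of $f_{m,n}(u)$ has top edge at height $h_c=1+a_c$ (with $h_m=n$) and lowest cell $\ell_c=\min\{r:b_r\ge c-1\}$; since $(\ell_c)$ and $(h_c)$ are weakly increasing and $h_m=n$, the diagram is a parallelogram polyomino with bounding box $[0,m]\times[0,n]$ exactly when consecutive columns overlap, $\ell_{c+1}\le h_c$ for $c=1,\dots,m-1$ (this forces every column nonempty, and for $c=m-1$ forces column $m$ nonempty, so all four sides of the box are met and the two boundary paths meet only at their endpoints). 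As $\ell_{c+1}=n+1-\#\{r:b_r\ge c\}$, this is the condition $\#\{r:b_r\ge c\}\ge n-a_c$ for all $c\in\{1,\dots,m-1\}$. On the other hand, the burning process converges to burnt sets of sizes $t$ (tops) and $q$ (bottoms) with $q=\#\{j:b_j\ge m-1-t\}$, $t=\#\{i:a_i\ge n-q\}$, namely the least fixed point above $(0,0)$ of the monotone map $t\mapsto\#\{i:a_i\ge n-\#\{j:b_j\ge m-1-t\}\}$, and $u$ is recurrent iff this limit is $(m-1,n)$; a discrete intermediate-value argument (the map is non-decreasing and decreases by at most one per unit step) shows that this holds iff there is no $c\in\{1,\dots,m-1\}$ with $\#\{r:b_r\ge c\}<n-a_c$ — the same condition. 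Therefore $u\in\Rec(D_{m,n})$ iff $f_{m,n}(u)\in\Para_{m,n}$.

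For (b), assume $u$ is recurrent, so $f:=f_{m,n}(u)\in\Para_{m,n}$ and by Lemma~\ref{knuthcori} the burning rounds exhaust all bottoms and all non-sink tops: $c_1+c_3+\cdots=n$ and $c_2+c_4+\cdots=m-1$. I would then follow $\Bounce(f)$ and show by induction on $k$ that after its $k$-th south--west cycle it sits at $\bigl(m-1-(c_2+\cdots+c_{2k}),\ n-(c_1+\cdots+c_{2k-1})\bigr)$, the south leg of that cycle having length $c_{2k-1}$ and the west leg length $c_{2k}$. Indeed, descending the line $x=m-1-(c_2+\cdots+c_{2k-2})$ the path meets the lower path at height $\ell_{x+1}-1=n-(c_1+\cdots+c_{2k-1})$, using $\ell_{x+1}=n+1-\#\{r:b_r\ge x\}$ and the burning identity $\#\{r:b_r\ge m-1-(c_2+\cdots+c_{2k-2})\}=c_1+\cdots+c_{2k-1}$; then running along $y=n-(c_1+\cdots+c_{2k-1})$ it meets the upper path at $x$ equal to the largest column index $c$ with $h_c\le y$, which is $\#\{i:a_i\le y-1\}=m-1-(c_2+\cdots+c_{2k})$, again by the burning identity. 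Summing, the path ends at $(0,0)$ and the recorded leg-lengths are $(c_1,c_2,c_3,\dots)=\canontop(u)$, which is precisely $\bounce(f)$.

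The main obstacle I foresee is the middle step: pinning down the exact numerical criterion for membership in $\Para_{m,n}$ — in particular that it is genuinely ``consecutive columns overlap'' and nothing more (so that the boundary paths do not touch in their interior) — and then matching it with recurrence through the fixed-point/burning analysis, while making all the degenerate situations line up: $c_1=0$, an empty $Q_k$ or $P_k$ (the process halts and $\Bounce$ never reaches $(0,0)$), and $m=1$ or $n=1$. Once the dictionary between the burning rounds and the column edges $\ell_c,h_c$ is in place, part (b) reduces to the bookkeeping above, which is routine if somewhat tedious.
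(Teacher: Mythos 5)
Your proposal is correct, and it stands on the same foundations as the paper's proof --- Lemma~\ref{knuthcori}, the reduction to the increasing configuration $\inc_{m,n}(u)$, and the identification of the parallel toppling rounds with the geometry of $f_{m,n}(u)$ --- but the pivotal equivalence in (a) is organised differently. The paper rewrites the round-by-round toppling conditions as the existence of sequences $(x_0,\ldots,x_{k+1})$ and $(y_0,\ldots,y_{k+1})$ satisfying four conditions and then appeals to the bounce-path characterisation of parallelogram polyominoes (Characterization~\ref{bpcharact}), so that recurrence and membership in $\Para_{m,n}$ are matched through one and the same combinatorial structure, which also delivers part (b) at once. You instead decouple the two sides: you characterise membership in $\Para_{m,n}$ by the single family of column-overlap inequalities $\#\{r : b_r \ge c\} \ge n - a_c$, i.e.\ $b_{1+a_c}\ge c$ --- this is condition (i) of Characterization~\ref{seqcharact}, and your implicit claim that conditions (ii) and (iii) there are consequences of (i) in this setting is correct (for $m\ge 2$), though it is exactly the point you rightly flag as needing care --- and you characterise recurrence as the least fixed point of the monotone burning map $g(t)=\#\{i : a_i \ge n-\#\{j : b_j\ge m-1-t\}\}$ being $m-1$, which for a non-decreasing self-map of $\{0,\ldots,m-1\}$ with $g(m-1)=m-1$ is equivalent to $g(t)>t$ for all $t\le m-2$, and hence, via $c=m-1-t$ and sortedness of $a$, to the same inequalities. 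This buys you independence from the appendix characterisation (which the paper states there without proof) at the price of the extra fixed-point lemma, and it pushes part (b) into a separate induction tracing the bounce path; your leg-length bookkeeping reproduces the paper's identity $\bounce(f_{m,n}(u))=(q_1,p_1,q_2,p_2,\ldots)$. The steps you defer --- no vertex topples twice, the degenerate cases $m=1$ or $n=1$, and an empty final round $Q_{k+1}$ --- are all routine and check out.
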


\begin{proof}
Let $u =(u_1,\ldots,u_{m+n-1}) \in \Stable(D_{m,n})$.
It is a consequence of Lemma~\ref{knuthcori} that in testing whether $u$ is a recurrent configuration, 
every vertex in $\{v_1,\ldots,v_{m+n-1}\}$ topples exactly once during the process. 
The configuration $(u_1,\ldots,u_{m-1},1+u_m,\ldots,1+u_{m+n-1})$ may be interpreted as the result 
of the toppling of the sink $v_0$ in the configuration $u$ even if the height of this vertex is not specified. 
If every other vertex in $\{v_1,\ldots,v_{m+n-1}\}$ topples exactly once, 
then the height of vertex $v_i$ will be $u_i+\left(\sum_{j}e_{j,i}\right) - n = u_i$ for $1\leq i\leq m-1$,
and $1+u_i+\left(\sum_{j\neq 0}e_{j,i}\right) - m =u_i$ for all $m\leq i \leq m+n-1$.
The order of topplings is irrelevant, and in what follows we will adhere to the canonical toppling process mentioned earlier in this section.

The state $u$ is recurrent iff $u'=\inc_{m,n}(u)$ is recurrent, since $\inc_{m,n}(u)$ is simply a reordering of heights within the top and bottom rows of the graph.
If $\CanonTop(u)=(A_1,A_2,\ldots )$ and $\CanonTop(u')=(A'_1 , A'_2,\ldots )$ then $A_i = \{ \pi(j) ~:~ j \in A'_i\}$ for all $i\geq 1$, 
where $\pi=\sperm_{m,n}(u)$.
Consequently, we have $\canontop_{m,n}(u)=\canontop_{m,n}(u')$.

Dhar's criterion (\ref{topeqn}) applied to $u'$ is equivalent to there existing non-empty ordered partitions 
$$P=(P'_1,\ldots,P'_k) \mbox{ and }Q=(Q'_1,\ldots,Q'_{k+1})$$ of the sets 
$\{v_1,\ldots,v_{m-1}\}$ and $\{v_{m},\ldots,v_{m+n-1}\}$, respectively, with the following properties:
\begin{enumerate}
\item[(i')] $1+u'_i = m$ for all $v_i \in Q'_1$.
\item[(ii')] $u'_i+q_0+\cdots + q_j\geq n > u'_i+q_0+\cdots+q_{j-1}$ for all $v_i \in P'_j$, $1\leq j\leq k$.
\item[(iii')] $1+u'_i+p_0+\cdots+p_{j}\geq m > 1+u'_i+p_0+\cdots+p_{j-1}$ for all $v_i \in Q'_{j+1}$, $1\leq j \leq k$.
\end{enumerate}
where $p_i=|P'_i|$, $q_i=|Q'_i|$, and $p_0=q_0=0$.

The understanding above is that $Q'_1$ is the set of vertices on the bottom row that topple first, i.e. $Q'_1=\{v_i: 1+u'_i \geq m, 1\leq i \leq m-1\}$.
Then we topple the vertices in the top row that have become unstable due to grains being added from the previous topplings in 
the bottom row, i.e. $P'_1=\{v_i: 1+u'_i+q_1\geq n, m\leq i \leq m+n-1\}$.
Next we topple those vertices in the bottom row that were stable before the topplings of the vertices $P'_1$, but have consequently become unstable, i.e.
$Q'_2=\{v_i: 1+u_i<n, 1+u'_i+p_1\geq n,  m\leq i \leq m+n-1\}$, and so on.
The block topplings happen via $Q'_1\to P'_1\to Q'_2\to P'_2 \to Q'_3 \cdots \to Q'_{k+1}$. All of these sets except for $Q'_{k+1}$ must be non-empty. 

Since the initial heights of the vertices in the top row of $u'$ are weakly increasing, 
the vertices may be partitioned into contiguous blocks according to the `time' at which they topple. 
The same is true of the bottom row. 
Indeed $p_0,\ldots,p_k$ and $q_0,\ldots,q_k$ are such that
\begin{align*}
v_i \in P'_j \; &\Leftrightarrow \; i \in \{m-(p_0+\cdots+p_j),\ldots,m-1-(p_0+\cdots+p_{j-1})\}\\
v_i \in Q'_j \; &\Leftrightarrow \; i \in \{m+n-(q_0+\cdots+q_j),\ldots,m+n-1-(q_0+\cdots+q_{j-1})\}.
\end{align*}
with
\begin{align*}
p_0+\cdots+p_k &= m-1\\
q_0+\cdots +q_{k+1} &= n\\
p_1,\ldots,p_k,q_1,\ldots,q_{k} &> 0.
\end{align*}
Note that it is possible that $q_{k+1}=0$.
Let us now change variables by setting $a_i=u'_i$ for all $1\leq i\leq m-1$, $b_i=u'_{m-1+i}$ for all $1\leq i \leq n$, 
$x_i=m-1-(p_0+\cdots+p_{k-i})$ and $y_i=n-(q_0+\cdots + q_{k+1-i})$ for all suitably defined $i$.
Conditions (i')--(iii') are equivalent to the following:

There exist numbers $0=x_0<x_1<\cdots<x_{k}=m-1$ and $0=y_0\leq y_1<\cdots <y_{k+1}=n$ such that:
\begin{enumerate}
\item[(i'')] $1+b_i=m$ for all $1+y_k \leq i \leq y_{k+1}=n$.
\item[(ii'')] $1+a_i \in \{1+y_{j+1},\ldots,y_{j+2}\}$ for all $1+x_j\leq i \leq x_{j+1}$ and $0\leq j \leq k-1$.
\item[(iii'')] $1+b_i \in \{1+x_j,\ldots,x_{j+1}\}$ for all $1+y_{j}\leq i \leq y_{j+1}]$ and $0\leq j \leq k-1$.
\end{enumerate}
By defining $x_{k+1}=m$, conditions (i'') and (iii'') above may be more compactly written as
$$\mbox{$1+b_i \in \{1+x_j,\ldots,x_{j+1}\}$ for all $1+y_{j}\leq i \leq y_{j+1}$ and $0\leq j \leq k$.}$$
We have now shown that given $u \in \Stable(D_{m,n})$, the configuration $u \in \Rec(D_{m,n})$ iff
there exist sequences $(x_0,\ldots,x_{k+1})$ and $(y_0,\ldots,y_{k+1})$ with
\begin{enumerate}
\item[(i)] $0=x_0<x_1<\cdots<x_{k}=m-1 <x_{k+1}=m$,
\item[(ii)] $0=y_0\leq y_1<y_2<\cdots<y_{k+1}=n$,
\item[(iii)] $1+a_i \in \{1+y_{j+1},\ldots,y_{j+2}\}$ for all $1+x_j\leq i \leq x_{j+1}$ and $0\leq j \leq k-1$,
\item[(iv)] $1+b_i \in \{1+x_j,\ldots,x_{j+1}\}$ for all $1+y_{j}\leq i \leq y_{j+1}$ and $0\leq j \leq k$.
\end{enumerate}
From Characterization~\ref{bpcharact}, these conditions hold precisely when $f_{m,n}(u) \in \Para_{m,n}$. This completes part (a).

The bounce path of such a parallelogram polyomino is $(y_{k+1}-y_k,x_{k}-x_{k-1},y_k-y_{k-1},x_{k-1}-x_{k-2},\ldots) = (q_1,p_1,q_2,p_2,\ldots)$.
Since the vertices topple in the order $Q_1 \to P_1\to \cdots \to Q_{k+1}$ (note that $Q_{k+1}$ can be empty) we have that $\canontop(u)=(q_1,p_1,q_2,p_2,\ldots)=\bounce(f_{m,n}(u))$, 
which concludes part (b).
\end{proof}

One may give a bijection from $\Rec(D_{m,n})$ to the set of all parallelogram polyominoes whose 
bounce paths have been decorated with an ordered set partition as follows:

Associate to every parallelogram polyomino a pair of ordered set partitions 
whose number of parts and size are related to a path within the polyomino that traverses the polyomino by `bouncing off' edges and corners
in a manner that follows.

If $\bounce(\PP)=(c_1,c_2,\ldots)$, then let us write 
$$\ebounce(\PP)=(c_2,c_4,\ldots) \mbox{ and }\obounce(\PP)=(c_1,c_3,\ldots).$$
Given a set $A$ we will write $X=(X_1,X_2,\ldots)  \vDash A$ to mean that $X$ is an ordered set partition of $A$.
We will write $\type(X)=(|X_1|,|X_2|,\ldots)$, the sequence of sizes of the ordered sets in the partition.
For example, if $X=(\{3,4,2\},\{1,5\},\{6,7\}) \vDash \{1,\ldots,7\}$, then $\type(X)=(3,2,2)$.

Let $\SetPara_{m,n}$ be the set of all triples $(\PP,A,B)$ where $\PP \in \Para_{m,n}$, 
$A=(A_1,A_2,\ldots)$ is an ordered set partition of $\{1,\ldots,m-1\}$ where $A_i$ is 
associated with the $i$th run of west steps in $\Bounce(\PP)$, 
and $B=(B_1,B_2,\ldots)$ is an ordered set partition of $\{m,\ldots,m+n-1\}$ where
$B_i$ is associated with the $i$th run of south steps in $\Bounce(\PP)$. 
More formally,
$$\SetPara_{m,n} = \bigcup_{\PP \in \Para_{m,n}} 
		\bigcup_{A \vDash \{1,\ldots,m-1\} \atop \type(A)=\ebounce(\PP)} 
		\bigcup_{B\vDash \{m,\ldots,m+n-1\} \atop \type(B)=\obounce(\PP)} 
		(\PP,A,B).$$
Note that if $(\PP,A=(A_1,A_2,\ldots),B=(B_1,B_2,\ldots)) \in \SetPara_{m,n}$ and $\bounce(\PP)=(\ell_1 ,\ell_2,\ldots)$, then
$\ell_{2i}=|A_i|$ and $\ell_{2i-1}=|B_i|$ for all $i\geq 1$.

We stress that the sizes of the sets $A$ and $B$ in the triple $(\PP,A,B)$ are determined by the bounce path of $\PP$, 
whereas the numbers in the sets which constitute $A$ and $B$ are given and do not come from the polyomino $\PP$.

\begin{example}
Let $\PP$ be $\PP_2$ in Example~\ref{bpexamp}. Let $A=(\{1,4,5\},\{2,3,6\})$ and $B=(\{8\},\{7,9\})$. Then $(\PP,A,B) \in \SetPara_{7,3}$
and may be represented in the following way:
	\begin{center}
	\includegraphics{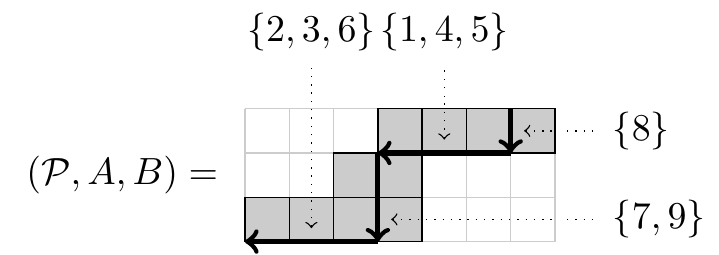}
	\end{center}
\end{example}

Given $u \in \Stable(D_{m,n})$ with $\CanonTop(u)=(Q_1,P_1,Q_2,P_2,\ldots)$, let $$\RtoSP(u)=(f_{m,n}(u),(P_1,P_2,\ldots),(Q_1,Q_2,\ldots)).$$
This construction, combined with parts (a) and (b) of Theorem~\ref{main:thm}, gives:

\begin{corollary}
$\RtoSP : \Rec(D_{m,n}) \to \SetPara_{m,n}$ is a bijection.
\end{corollary}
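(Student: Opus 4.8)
The plan is to show that $\RtoSP$ is a well-defined bijection by exhibiting an explicit inverse, leveraging the structural results already established in Theorem~\ref{main:thm}. First I would verify that $\RtoSP$ lands in $\SetPara_{m,n}$: given $u \in \Rec(D_{m,n})$, part (a) of Theorem~\ref{main:thm} gives $f_{m,n}(u) \in \Para_{m,n}$, and part (b) gives $\bounce(f_{m,n}(u)) = \canontop(u) = (|Q_1|,|P_1|,|Q_2|,|P_2|,\ldots)$. Since $\CanonTop(u) = (Q_1,P_1,Q_2,P_2,\ldots)$ has $(Q_1,Q_2,\ldots)$ an ordered set partition of the bottom vertices $\{v_m,\ldots,v_{m+n-1}\}$ and $(P_1,P_2,\ldots)$ an ordered set partition of the top vertices $\{v_1,\ldots,v_{m-1}\}$, we read off $\type((P_1,P_2,\ldots)) = (|P_1|,|P_2|,\ldots) = \ebounce(f_{m,n}(u))$ and $\type((Q_1,Q_2,\ldots)) = (|Q_1|,|Q_2|,\ldots) = \obounce(f_{m,n}(u))$. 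Hence the triple $(f_{m,n}(u),(P_1,P_2,\ldots),(Q_1,Q_2,\ldots))$ satisfies exactly the membership conditions for $\SetPara_{m,n}$.

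Next I would construct the inverse map $\Psi : \SetPara_{m,n} \to \Rec(D_{m,n})$. Given $(\PP,A,B)$ with $A = (A_1,A_2,\ldots)$ and $B = (B_1,B_2,\ldots)$, the bounce path $\bounce(\PP)$ together with Characterization~\ref{bpcharact} determines sequences $(x_0,\ldots,x_{k+1})$ and $(y_0,\ldots,y_{k+1})$ satisfying conditions (i)--(iv) in the proof of Theorem~\ref{main:thm}; from those sequences and the profile of $\PP$ one recovers the increasing configuration $\inc_{m,n}(u) = (a_1,\ldots,a_{m-1},b_1,\ldots,b_n)$ via $1+a_i \in \{1+y_{j+1},\ldots,y_{j+2}\}$ for $1+x_j \le i \le x_{j+1}$ and $1+b_i \in \{1+x_j,\ldots,x_{j+1}\}$ for $1+y_j \le i \le y_{j+1}$ (the exact value of each $a_i,b_i$ being pinned down by the actual upper and lower paths of $\PP$, not just by the bounce data). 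Then the ordered set partitions $A$ and $B$ tell us the permutation $\sperm_{m,n}(u) = \pi$: the blocks $B_1,B_2,\ldots$ prescribe which original bottom vertices occupy the contiguous positions toppling at times $1,2,\ldots$, and similarly $A_1,A_2,\ldots$ for the top vertices. Undoing $\decomp_{m,n}$ reconstitutes $u$ from $(\inc_{m,n}(u),\pi)$. I would then check that $u$ so defined is genuinely stable and, by Theorem~\ref{main:thm}(a) applied to $f_{m,n}(u) = \PP \in \Para_{m,n}$, recurrent.

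Finally I would verify $\RtoSP \circ \Psi = \mathrm{id}$ and $\Psi \circ \RtoSP = \mathrm{id}$. For $\Psi \circ \RtoSP = \mathrm{id}$: starting from $u$, we get $f_{m,n}(u)$ whose defining paths encode $\inc_{m,n}(u)$ exactly (by the Young-diagram description of $f_{m,n}$), and $(P_i),(Q_i) = \CanonTop(u)$ which — by the block structure established in the proof of Theorem~\ref{main:thm}, where $v_i \in P'_j$ and $v_i \in Q'_j$ occupy prescribed contiguous index ranges — records precisely $\sperm_{m,n}(u)$ relative to $\inc_{m,n}(u)$; so $\Psi$ returns $u$. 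For $\RtoSP \circ \Psi = \mathrm{id}$: starting from $(\PP,A,B)$, applying $\Psi$ then $\RtoSP$ returns $f_{m,n}(u) = \PP$ (this is again Theorem~\ref{main:thm}(a), with the matching being tight because conditions (i)--(iv) reconstruct $\PP$'s paths) and returns $\CanonTop(u) = (Q_1,P_1,\ldots)$; the canonical toppling of the reconstructed $u$ must produce back the ordered set partitions we started with, since by construction $u$'s heights and the identity of its vertices were chosen exactly to make block $j$ of the toppling equal $A_j$ (top row) and $B_j$ (bottom row).

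The main obstacle I anticipate is the bookkeeping in the inverse direction: one must carefully argue that the bounce path and the actual profile of $\PP$ together determine each individual height $a_i,b_i$ uniquely (conditions (iii)--(iv) only constrain them to intervals, and it is the exact staircase shape of $\upperp(\PP)$ and $\lowerp(\PP)$ that fixes them), and one must confirm that the positions at which vertices topple in the canonical process are exactly the contiguous blocks of the increasing reordering — this is implicit in the proof of Theorem~\ref{main:thm} but needs to be invoked precisely to see that $\CanonTop$ of the reconstructed configuration recovers the given ordered set partitions $A$ and $B$ in the correct order. Once this correspondence between "toppling blocks" and "contiguous index ranges determined by the $x_i,y_i$" is stated cleanly, both composition checks become essentially formal.
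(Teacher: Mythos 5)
Your first paragraph (that $\RtoSP$ lands in $\SetPara_{m,n}$) is fine and is all the paper itself really records: its ``proof'' is the one-line remark that the corollary follows from parts (a) and (b) of Theorem~\ref{main:thm}. The substance of your proposal is therefore the explicit inverse $\Psi$, and that is where there is a genuine gap. You assert that the blocks $B_1,B_2,\ldots$ (resp.\ $A_1,A_2,\ldots$) ``prescribe which original bottom vertices occupy the contiguous positions toppling at times $1,2,\ldots$''. But a toppling block is only a \emph{set} of vertices, while the contiguous range of positions it occupies in $\inc_{m,n}(u)$ may carry several \emph{distinct} heights --- conditions (iii)--(iv) in the proof of Theorem~\ref{main:thm} confine the heights of a wave to an interval, they do not force them to be equal. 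The triple $(\PP,A,B)$ thus records which vertices topple in each wave and the multiset of heights carried by that wave, but not which vertex of a block receives which height; in other words it does not determine $\sperm_{m,n}(u)$. Your $\Psi$ is not well defined at exactly the point you defer as ``essentially formal'' bookkeeping, and the defect cannot be repaired from the data $(\PP,A,B)$ alone.

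Concretely, take $m=n=3$ and the stable configurations $u=(0,1,2,2,2)$ and $\tilde u=(1,0,2,2,2)$ on $D_{3,3}$ (top heights $0,1$ resp.\ $1,0$, all bottom heights $2$). Adding one grain to each bottom vertex makes $v_3,v_4,v_5$ all unstable; they topple, each top vertex then holds at least $3$ grains and topples, and the original configuration is restored. Hence both are recurrent, both have $\CanonTop=(\{3,4,5\},\{1,2\})$, and both have increasing rearrangement $(0,1,2,2,2)$, hence the same polyomino $f_{3,3}(u)=f_{3,3}(\tilde u)$. Therefore $\RtoSP(u)=\RtoSP(\tilde u)$ with $u\neq\tilde u$, so no inverse of the kind you describe exists, and the composition identities in your final paragraph cannot both hold. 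Any correct completion must use a decoration finer than the bounce blocks whenever a wave contains positions of unequal height (e.g.\ recording which vertex labels sit in which column/row of $\PP$, equivalently recovering $\sperm_{m,n}(u)$ itself); as written, your argument silently assumes the heights within each wave are constant.
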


\newcommand{\Nara}[2]{\mathrm{Nara}(#1,#2)}
It is well-known from the literature on the sandpile model that recurrent configurations on a 
graph $G$ are in one-to-one correspondence with spanning trees of $G$. 
Thus the number of recurrent configurations on $D_{m,n}$ is the number of spanning trees of $K_{m,n}$, which is $m^{n-1} n^{m-1}$.

\begin{corollary}\label{naracor} For all $m,n\geq 1$, $|\Rec(D_{m,n})|=m^{n-1} n^{m-1}$  and $|\RecInc(D_{m,n})|= \Nara{m+n-1}{m}$,
where $\Nara{a}{b}=\frac{1}{a}{a\choose b}{a\choose b-1}$ are the Narayana numbers~\citep{Narayana:1955}.
\end{corollary}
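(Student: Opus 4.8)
The plan is to treat the two equalities separately. The first, $|\Rec(D_{m,n})|=m^{n-1}n^{m-1}$, is immediate: recurrent configurations of a sink sandpile are in bijection with spanning trees of the underlying graph (as recalled just above the corollary), and $K_{m,n}$ has exactly $m^{n-1}n^{m-1}$ spanning trees. So the work is all in the second equality, $|\RecInc(D_{m,n})|=\Nara{m+n-1}{m}$, which I would break into ``$|\RecInc(D_{m,n})|=|\Para_{m,n}|$'' followed by ``$|\Para_{m,n}|=\Nara{m+n-1}{m}$''.

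For the first of these I would show that $\RtoSP$ restricts to a bijection from $\RecInc(D_{m,n})$ onto the set of triples $(\PP,A,B)\in\SetPara_{m,n}$ in which $A$ and $B$ are the \emph{interval} ordered set partitions, i.e. $A=(\{m-\ell_1,\dots,m-1\},\{m-\ell_1-\ell_2,\dots,m-1-\ell_1\},\dots)$ where $\ebounce(\PP)=(\ell_1,\ell_2,\dots)$, and similarly for $B$ in terms of $\obounce(\PP)$. Since a composition determines its interval partition uniquely, forgetting $A,B$ identifies that set with $\Para_{m,n}$, giving $|\RecInc(D_{m,n})|=|\Para_{m,n}|$. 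That $\RtoSP$ sends $\RecInc$ into it is read off from the proof of Theorem~\ref{main:thm}: for increasing $u$, $\sperm_{m,n}(u)$ is the identity, so the canonical toppling sets $Q_i,P_i$ equal $Q_i',P_i'$, which that proof exhibits as contiguous index-blocks arranged top-down — exactly the interval partitions of types $\obounce(f_{m,n}(u))$ and $\ebounce(f_{m,n}(u))$. Conversely, given $(\PP,A,B)$ with $A,B$ interval, put $u=\RtoSP^{-1}(\PP,A,B)$ and $u'=\inc_{m,n}(u)$; since $f_{m,n}$ depends only on $\inc_{m,n}(u)$ we get $f_{m,n}(u')=\PP$, and, $u'$ being increasing, the decorations $\RtoSP$ attaches to $u'$ are interval of types $\ebounce(\PP),\obounce(\PP)$, hence equal to $A,B$ by uniqueness; so $\RtoSP(u')=\RtoSP(u)$, whence $u=u'$ is increasing.

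To count $\Para_{m,n}$ I would use Lindström--Gessel--Viennot. A polyomino $\PP\in\Para_{m,n}$ is a pair $(\upperp(\PP),\lowerp(\PP))$ of $\{\nn,\ee\}$-paths $(0,0)\to(m,n)$ meeting only at their endpoints, which forces $\upperp(\PP)$ to begin with $\nn$ and end with $\ee$ and $\lowerp(\PP)$ to begin with $\ee$ and end with $\nn$; deleting those four steps is a bijection onto pairs of vertex-disjoint $\{\nn,\ee\}$-paths $(0,1)\to(m-1,n)$ and $(1,0)\to(m,n-1)$. As any path $(0,1)\to(m,n-1)$ meets any path $(1,0)\to(m-1,n)$ (both lie on the same antidiagonal at each step and their $x$-coordinates cross), the LGV lemma gives
\[
|\Para_{m,n}| \;=\; \det\Mat{\binom{m+n-2}{m-1} & \binom{m+n-2}{m}\\ \binom{m+n-2}{m-2} & \binom{m+n-2}{m-1}} \;=\; \binom{m+n-2}{m-1}^{2}-\binom{m+n-2}{m}\binom{m+n-2}{m-2},
\]
and a short manipulation (set $N=m+n-1$, pull out $\binom{N-1}{m-1}^{2}$, and simplify the remaining factor to $\tfrac{N}{m(N-m+1)}$) rewrites this as $\tfrac1N\binom{N}{m}\binom{N}{m-1}=\Nara{m+n-1}{m}$.

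The LGV evaluation and the closing binomial identity are routine. The step I expect to need the most care is the converse half of ``$|\RecInc|=|\Para_{m,n}|$'' — confirming that a member of $\SetPara_{m,n}$ whose decorations happen to be interval partitions really is the image of an increasing configuration — and the two facts that make it go through are that $f_{m,n}$ factors through $\inc_{m,n}$ and that the interval ordered set partition of a prescribed type is unique.
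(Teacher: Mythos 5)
Your proposal is correct and follows the route the paper intends: the first count comes from the spanning-tree correspondence stated immediately before the corollary, and the second from identifying $\RecInc(D_{m,n})$ with $\Para_{m,n}$ (via Theorem~\ref{main:thm} and the bijection $\RtoSP$) together with the Narayana enumeration of parallelogram polyominoes, which the paper simply cites to Narayana. The only difference is that you supply details the paper leaves implicit — the interval-decoration characterization of the image of $\RecInc$ under $\RtoSP$, and a self-contained Lindstr\"om--Gessel--Viennot computation of $|\Para_{m,n}|$ — and both are carried out correctly.
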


\section{Special types of minimal recurrent configurations}
\label{SEC:4}
In this section we examine several different types of minimal recurrent configurations 
in $\Rec(D_{m,n})$ under the map $f$ that was given in the previous section.
As was mentioned before, the relationship between these different collections is summarised in Figure~\ref{SPECIAL:TYPES}.
\begin{figure}[h!] \centerline{\includegraphics{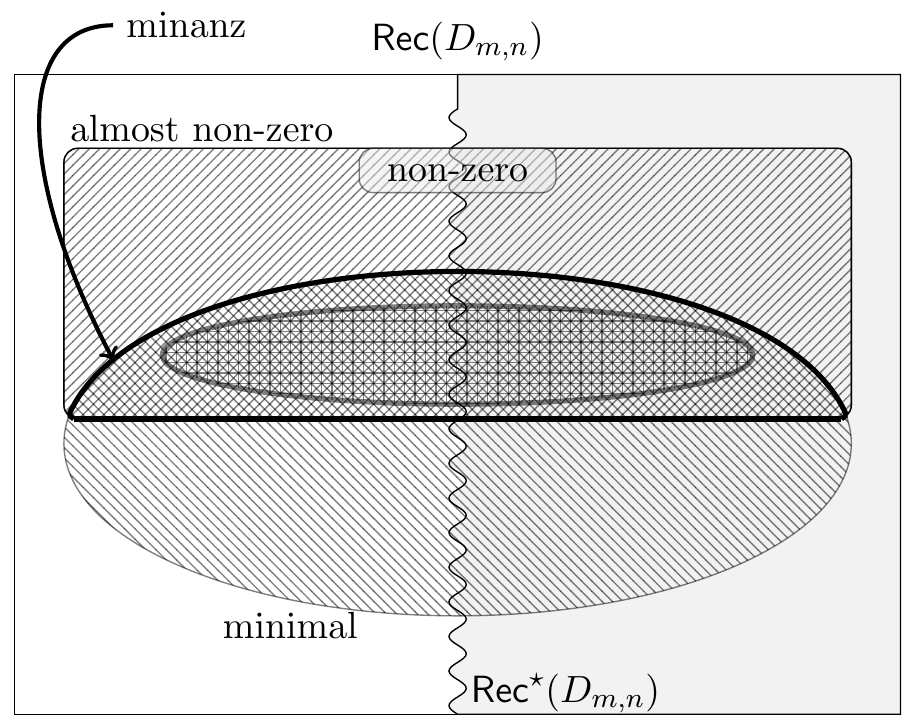}} \caption{\label{SPECIAL:TYPES} How the different types of recurrent configurations of Section \ref{SEC:4} relate to one another. The intersection of the line-filled ellipse with the line-filled rectangular region is the collection of ``minanz'' recurrent configurations. When $m=n$ this is the set $\SqRec_n$ defined and discussed in Section \ref{SEC:4}. When $m=n$, the elliptical region contained within the minanz region is the collection of square top-heavy minanz recurrent configurations $\THSqRec_n$, also defined and discussed in Section \ref{SEC:4}.} \end{figure}
Recurrent configurations which contain no empty vertices are a natural class to examine from a physical viewpoint. 
However, if a recurrent configuration is minimal, then there must be at least one vertex that is empty (this will be shown in Lemma~\ref{minanzlemma}).
So in what follows we will be looking at minimal recurrent configurations that are almost non-empty, which we will call {\it{minanz}} configurations, 
in the sense that {\it{exactly}} one designated vertex on the bottom row is allowed to be empty.

In Section \ref{SEC:4}.2, we look at configurations that have the same number of vertices on the top row as on the bottom row.
We call such configurations {\it{square configurations}}, i.e. those for which $m=n$.
Square configurations in $\Rec(D_{n,n})$ that are {\it{minanz}} are shown to correspond via a mapping $\phi$ to a matrix analogue of set partitions.
After this we consider a subset of the aforementioned square configurations, which we will call {\it{top-heavy}}. The image of these configurations under $\phi$ are precisely those matrices which are upper-triangular (Theorem~\ref{minanzmatrix}).
This leads to a connection with (2+2)-free partially ordered sets, which is explained towards the end of this section (Theorem~\ref{recs_to_posets}).

Given $u\in \Rec(D_{m,n})$, let 
$$\level(u) = u_1+\cdots+u_{m+n-1} - n(m-1).$$
Define the equivalence relation $\sim$ as follows: for $u, u' \in \Rec(D_{m,n})$, we write $u\sim u'$ if $\inc_{m,n}(u)=\inc_{m,n}(u')$.
The quotient set $\Rec(D_{m,n})/ \sim$ is in one-to-one correspondence with $\RecInc(D_{m,n})$, the set of all increasing configurations of $\Rec(D_{m,n})$.

\begin{theorem}\label{levelareaequiv}
If $u \in \Rec(D_{m,n})$, then $\level(u)=\area(f_{m,n}(u))-(m+n-1)$.
\end{theorem}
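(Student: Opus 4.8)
The plan is to compute both sides of the identity directly from the definitions and reconcile them. Fix $u \in \Rec(D_{m,n})$ and let $u' = \inc_{m,n}(u) = (a_1,\dots,a_{m-1},b_1,\dots,b_n)$. Since $\level$ depends only on the multiset of heights in each row, $\level(u) = \level(u')$, and likewise $f_{m,n}(u)$ and $f_{m,n}(u')$ have the same area (the area of $f_{m,n}$ is manifestly symmetric in any reordering of the $a_i$ among themselves and the $b_j$ among themselves, since it is built as an intersection of a Young-diagram-type region determined by the multiset $\{1+a_i\}$ and one determined by the multiset $\{1+b_j\}$). So without loss of generality we may assume $u = u'$ is increasing and work with the sequences $(a_i)$ and $(b_j)$.

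First I would rewrite $\area(f_{m,n}(u))$ using the bookkeeping identity $\larea(\PP) + \area(\PP) + \uarea(\PP) = mn$ from Section~\ref{SEC:2}, so that it suffices to express $\larea$ and $\uarea$ in terms of the $a_i$ and $b_j$. From the Young-diagram description in the second definition of $f_{m,n}$: the region above $f_{m,n}(u)$ inside the bounding rectangle, restricted to the vertical strip $[i-1,i]$ for $1 \le i \le m-1$, has height $n - (1+a_i)$, since the upper path reaches height $1+a_i$ over that strip; summing gives $\uarea(f_{m,n}(u)) = \sum_{i=1}^{m-1}(n-1-a_i) = (m-1)(n-1) - \sum_{i=1}^{m-1} a_i$. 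Symmetrically, the region below $f_{m,n}(u)$ restricted to the horizontal strip $[j-1,j]$ has width $(1+b_j) - m$ when $1+b_j \ge m$ and... here one must be slightly careful because the lower path can dip below column $m-1$; the cleanest route is to observe that the lower path, read as widths from top to bottom, is $(1+b_n, \dots, 1+b_1)$, so $\larea(f_{m,n}(u)) = \sum_{j=1}^n (1+b_j) \cdot 1 - (\text{area of the part of these columns lying weakly right of the region}) $ — more precisely $\larea = \sum_{j=1}^n (1+b_j) - (\text{columns counted})$; I will instead directly write $\larea(f_{m,n}(u)) = \sum_{j=1}^{n} b_j + n - (\text{something})$ and pin down the constant by evaluating on the ribbon polyomino, where $\area = m+n-1$ and hence $\level = 0$ must hold. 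Combining, $\area = mn - \uarea - \larea = mn - \big[(m-1)(n-1) - \sum a_i\big] - \big[n + \sum b_j - \star\big]$, and collecting constants yields $\area(f_{m,n}(u)) = \big(\sum_{i=1}^{m-1} a_i + \sum_{j=1}^n b_j\big) - n(m-1) + (m+n-1)$, which is exactly $\level(u) + (m+n-1)$.

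The one genuine obstacle is getting the additive constant in $\larea$ exactly right, because the lower path of a parallelogram polyomino need not stay in a single Young-diagram shape relative to the lower-left corner (it starts with east steps along the bottom and only later turns north), so the naive ``$\sum$ of widths'' overcounts or undercounts by a boundary term. I expect to resolve this either by (i) using instead the \emph{upper} path for both $\uarea$ and a complementary count, since the upper path from $(0,0)$ to $(m,n)$ is a genuine weakly-increasing staircase and its heights over columns $1,\dots,m$ are $1+a_1 \le \dots \le 1+a_{m-1} \le n$, giving $\uarea = \sum_{i=1}^{m-1}(n-1-a_i)$ cleanly, and then doing the symmetric computation with rows for the region determined by the $b_j$; or (ii) calibrating the constant against a single known instance, namely that $\PP \in \Ribbon_{m,n}$ corresponds to a minimal recurrent configuration with $\level(u)=0$ (the forward reference to Lemma~\ref{minanzlemma} / the minimality discussion), which forces the constant to be $m+n-1$. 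Either way, once the two linear expressions in $(a_i,b_j)$ are shown to agree up to the additive constant $m+n-1$, and the constant is verified at one point, the theorem follows. I would present the argument via route (i), keeping the $\uarea$/row-wise symmetry explicit so no boundary correction is ever needed, and reduce the calibration to a one-line sanity check on the ribbon case.
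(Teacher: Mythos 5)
Your route (i) is essentially the paper's own proof: the paper observes that the bottom-anchored Young diagram with heights $(n,1+a_{m-1},\ldots,1+a_1)$ has area $n+\sum_{i}(1+a_i)=\area+\larea$ and the left-anchored one with widths $(1+b_n,\ldots,1+b_1)$ has area $\sum_j(1+b_j)=\area+\uarea$, then adds these and uses $\larea+\area+\uarea=mn$, which is exactly your column-wise/row-wise computation of $\uarea$ and $\larea$. Your worry about a boundary correction in $\larea$ is unfounded --- since $f_{m,n}(u)\in\Para_{m,n}$ the right end of row $j$ is exactly $1+b_j$, so the row-wise count $\larea=\sum_j(m-1-b_j)$ is as clean as the column-wise one and no calibration at the ribbon case is needed.
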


\begin{proof}
Let $u \in \Rec(D_{m,n})$. Then we have that 
\[ n+\sum_{i=1}^{m-1} (1+u_i)=n+\sum_{i=1}^{m-1} (1+a_i) = \area(f_{m,n}(u))+\larea(f_{m,n}(u)) \]
and
\[ \sum_{j=m}^{m+n-1}(1+u_j)=\sum_{j=1}^n (1+b_j)=\area(f_{m,n}(u))+\uarea(f_{m,n}(u)).\]
Adding both of these gives 
\begin{align*}
n+\sum_{i=1}^{m+n-1} (1+u_i) 
&= \uarea(f_{m,n}(u))+2\area(f_{m,n}(u))+\larea(f_{m,n}(u))\\
&= mn+\area(f_{m,n}(u)),
\end{align*}
which gives $\level(u)=\area(f_{m,n}(u))-(m+n-1)$.
\end{proof}

\subsection{Minimal and minanz recurrent configurations}
A configuration $u \in \Rec(D_{m,n})$ for which $\level(u)$ is as small as possible, i.e. 0, is called {\it{minimal}}. 
From Theorem~\ref{levelareaequiv}, $\level(u)=0$ iff $\area(f_{m,n}(u))=m+n-1$, and this happens iff $f_{m,n}(u) \in \Ribbon_{m,n}$.

\begin{theorem}
Let $u \in \Rec(D_{m,n})$.
The following are equivalent: 
\begin{enumerate}
\item[(i)] $u$ is minimal, 
\item[(ii)] $f_{m,n}(u) \in \Ribbon_{m,n}$, and 
\item[(iii)] $\level(u)=0$.
\end{enumerate}
\end{theorem}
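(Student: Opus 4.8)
The plan is to observe that this theorem is essentially an immediate corollary of Theorem~\ref{levelareaequiv} together with the definition of $\Ribbon_{m,n}$, so the proof amounts to chaining together equivalences that have already been established. First I would recall the definition: $u \in \Rec(D_{m,n})$ is \emph{minimal} precisely when $\level(u)$ attains its minimum possible value. So the equivalence (i) $\Leftrightarrow$ (iii) reduces to showing that this minimum value is $0$; that is, that $\level(u) \geq 0$ for every $u \in \Rec(D_{m,n})$, with equality attained. By Theorem~\ref{levelareaequiv}, $\level(u) = \area(f_{m,n}(u)) - (m+n-1)$, and by Theorem~\ref{main:thm}(a), $f_{m,n}(u) \in \Para_{m,n}$ whenever $u$ is recurrent. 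Since every parallelogram polyomino in $\Para_{m,n}$ has area at least $m+n-1$ (this is exactly the defining bound for $\Ribbon_{m,n}$, stated in Section~\ref{SEC:2}), we get $\level(u) \geq 0$, and equality holds iff $\area(f_{m,n}(u)) = m+n-1$, i.e. iff $f_{m,n}(u) \in \Ribbon_{m,n}$.

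With that in hand, the three equivalences fall out simultaneously. From Theorem~\ref{levelareaequiv}, $\level(u) = 0$ if and only if $\area(f_{m,n}(u)) = m+n-1$; by definition of $\Ribbon_{m,n}$ (and the fact that $m+n-1$ is the minimal area among elements of $\Para_{m,n}$, so there is no ambiguity), this holds if and only if $f_{m,n}(u) \in \Ribbon_{m,n}$. That gives (ii) $\Leftrightarrow$ (iii). And since $\level(u) \geq 0$ always, "$\level(u) = 0$" is the same as "$\level(u)$ is minimal", which is the definition of $u$ being minimal; that gives (i) $\Leftrightarrow$ (iii). I would write the proof as a short cycle or simply as "(i) $\Leftrightarrow$ (iii) is the definition combined with the lower bound $\level(u)\geq 0$; (ii) $\Leftrightarrow$ (iii) is Theorem~\ref{levelareaequiv} combined with the characterization of ribbon polyominoes."

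I do not anticipate a genuine obstacle here — the theorem is a packaging result. The one point requiring a sentence of care is the claim that $0$ is actually the minimum of $\level$, i.e. that it is attained: one should note that $\Ribbon_{m,n}$ is non-empty (for instance the "staircase-free" polyomino consisting of a single row of $m$ cells stacked with a single column, or more simply any $L$-shaped ribbon, has area $m+n-1$), so there exists a recurrent configuration with $\level(u) = 0$; this justifies calling such configurations "minimal" rather than merely "of level $0$". Alternatively, one can sidestep attainment entirely and just present the logical equivalence of the three conditions for a fixed $u$, since that is all the theorem asserts. I would take the latter route for brevity, relegating the non-emptiness remark to at most a parenthetical.
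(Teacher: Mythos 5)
Your proposal is correct and matches the paper's own (very brief) justification: the paper defines \emph{minimal} as ``$\level(u)$ as small as possible, i.e.\ $0$,'' and then notes that by Theorem~\ref{levelareaequiv} $\level(u)=0$ iff $\area(f_{m,n}(u))=m+n-1$ iff $f_{m,n}(u)\in\Ribbon_{m,n}$, exactly the chain of equivalences you give. Your extra remark about why $0$ is actually the attained minimum (via the lower bound $\area(\PP)\geq m+n-1$ for $\PP\in\Para_{m,n}$) is a small point of care the paper leaves implicit, but it does not change the argument.
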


The set of all different minimal configurations $\{u \in \RecInc(D_{m,n})~:~ u \mbox{ is minimal}\}$ is in one-to-one correspondence with the set of all ribbon polyominoes $\Ribbon_{m,n}$.
Every ribbon polyomino is uniquely characterised by a lattice path that goes through all the centres of its cells. Such a path goes from $(1/2,1/2)$ to $(m-1/2,n-1/2)$ and takes unit north and east steps.

\begin{corollary}
$|\{u \in \RecInc(D_{m,n})~:~ u \mbox{ is minimal}\}|={m+n-2 \choose m-1}$.
\end{corollary}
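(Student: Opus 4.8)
The plan is to count ribbon polyominoes in $\Ribbon_{m,n}$ directly, since the preceding discussion already establishes that minimal increasing recurrent configurations are in bijection with $\Ribbon_{m,n}$, and that each ribbon polyomino is uniquely described by a monotone lattice path through the centres of its cells, running from $(1/2,1/2)$ to $(m-1/2,n-1/2)$ with unit north and east steps. So the corollary reduces to counting such lattice paths.

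First I would make precise the correspondence between a ribbon polyomino $\PP \in \Ribbon_{m,n}$ and its "spine" path. A ribbon polyomino, being a parallelogram polyomino of minimal area $m+n-1$, consists of exactly $m+n-1$ unit cells, and the minimality of the area forces the polyomino to be "thin": no $2\times 2$ block of cells can appear, so the cells form a staircase-like strip. The centres of these cells, read from bottom-left to top-right, form a sequence of lattice points (shifted by $(1/2,1/2)$) in which consecutive centres differ by either $(1,0)$ or $(0,1)$; the starting cell has centre $(1/2,1/2)$ and the terminating cell has centre $(m-1/2,n-1/2)$. Conversely, any such monotone path of $m+n-1$ lattice points determines, by taking the union of the corresponding unit cells, a parallelogram polyomino with bounding box $[0,m]\times[0,n]$ and area exactly $m+n-1$, hence an element of $\Ribbon_{m,n}$. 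I would note this bijection is exactly the one already asserted in the paragraph preceding the corollary, so I may simply invoke it.

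Then the count is immediate: a monotone north/east lattice path from $(1/2,1/2)$ to $(m-1/2,n-1/2)$ makes a total of $(m-1)$ east steps and $(n-1)$ north steps, in some order, so there are $\binom{(m-1)+(n-1)}{m-1} = \binom{m+n-2}{m-1}$ of them. Combining this with the stated bijection between $\{u \in \RecInc(D_{m,n}) : u \text{ is minimal}\}$ and $\Ribbon_{m,n}$ yields $|\{u \in \RecInc(D_{m,n}) : u \text{ is minimal}\}| = \binom{m+n-2}{m-1}$, as claimed.

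I do not anticipate a genuine obstacle here; the one point deserving a careful sentence is the verification that minimal area genuinely forces the "no $2\times 2$ block" thinness, so that the cell set really is a monotone strip of cells and the centre-path is well-defined and bijective — but this follows from a short area/connectivity argument (any parallelogram polyomino containing a $2\times 2$ block, or more generally any "fat" spot, can be seen to have strictly more than $m+n-1$ cells given its bounding box constraints), and in any case the excerpt has already committed to this characterisation, so it is legitimate to cite it rather than reprove it.
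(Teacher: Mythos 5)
Your proposal is correct and follows exactly the route the paper intends: it invokes the bijection between minimal increasing recurrent configurations and $\Ribbon_{m,n}$, identifies each ribbon polyomino with its monotone centre path from $(1/2,1/2)$ to $(m-1/2,n-1/2)$, and counts the $\binom{m+n-2}{m-1}$ such paths. The paper leaves this proof implicit in the paragraph preceding the corollary, so your write-up simply makes the same argument explicit.
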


We call a configuration $u \in \Rec(D_{m,n})$ {\it{almost non-zero}} if $$u_1,\ldots,u_{m-1},u_{m+1},\ldots,u_{m+n-1}>0.$$
Configurations that are both minimal and almost non-zero will be called {\it{minanz configurations}}.

\begin{lemma}\label{minanzlemma}
If $u \in \Rec(D_{m,n})$ is minanz, then 
\begin{enumerate}
\item[(i)] $u_m=0$,
\item[(ii)] the cells $[0,1]\times [0,1]$, $[0,1]\times [1,2]$ and $[1,2]\times [1,2]$ are in $f_{m,n}(u)$, and
\item[(iii)] $\Bounce(f_{m,n}(u))$ has suffix $\ww\sss$.
\end{enumerate}
\end{lemma}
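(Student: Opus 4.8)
The plan is to translate the minanz hypothesis into constraints on $\inc_{m,n}(u) = (a_1,\ldots,a_{m-1},b_1,\ldots,b_n)$ and on the canonical toppling, and then read off each of (i)--(iii) from the characterization of $f_{m,n}(u)$ developed in the proof of Theorem~\ref{main:thm}. Recall that minimality means $\level(u)=0$, equivalently $f_{m,n}(u)\in\Ribbon_{m,n}$, and that almost non-zero means every $u_i$ with $i\neq m$ is positive, which (since $\inc$ is just a within-row rearrangement) is equivalent to $a_1\geq 1$ and $b_2\geq 1$ in the increasing configuration, while $b_1$ may be $0$.

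First I would prove (i). From the proof of Theorem~\ref{main:thm}, recurrence of $u$ gives sequences $0=x_0<x_1<\cdots<x_k=m-1<x_{k+1}=m$ and $0=y_0\leq y_1<\cdots<y_{k+1}=n$ with conditions (iii) and (iv) there; in particular $1+b_i\in\{1+y_0,\ldots,y_1\}=\{1,\ldots,y_1\}$ for all $1\leq i\leq y_1$ (the $j=0$ case of (iv)), since $x_0=0$ and $x_1$ need not enter. Wait — more carefully, the $j=0$ instance of condition (iv) reads $1+b_i\in\{1+x_0,\ldots,x_1\}=\{1,\ldots,x_1\}$ for all $1\leq i\leq y_1$, so $b_1\leq x_1-1$. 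If $y_1\geq 1$ then $b_1\geq 1$ would force $x_1\geq 2$, which is fine, so this alone does not pin down $b_1=0$; the real constraint comes from minimality. Using $\larea(f_{m,n}(u))+\area(f_{m,n}(u))+\uarea(f_{m,n}(u))=mn$ together with $\area=m+n-1$ and the two identities in the proof of Theorem~\ref{levelareaequiv}, namely $n+\sum_{i=1}^{m-1}(1+a_i)=\area+\larea$ and $\sum_{j=1}^n(1+b_j)=\area+\uarea$, I would show that minimality forces the $b_j$ to be as small as the parallelogram constraints permit; combined with $b_2,\ldots,b_n\geq 1$ this squeezes $b_1$ down to $0$. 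Concretely, a ribbon polyomino has a lattice-path description through cell centres, and $b_1$ is the width of the lowest row of $f_{m,n}(u)$ minus one; minimality of the area together with $b_1\le b_2$ and $b_2\ge1$ forces the lowest row to have width exactly $1$, i.e. $b_1=0$. This is the step I expect to require the most care, since it is where the global minimality hypothesis has to be converted into the single local statement $u_m=0$; I would likely argue by contradiction, assuming $b_1\geq 1$ and exhibiting a strictly smaller recurrent configuration or directly computing that $\area(f_{m,n}(u))>m+n-1$.

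Given (i), parts (ii) and (iii) are essentially geometric consequences. For (ii): the cell $[0,1]\times[0,1]$ lies in $f_{m,n}(u)$ because every nonempty parallelogram polyomino in $\Para_{m,n}$ contains the unit square at the origin (both defining paths start at $(0,0)$). The cell $[0,1]\times[1,2]$ lies in $f_{m,n}(u)$ iff the second row from the bottom has width at least $1$, i.e. $b_2\geq 1$, which is exactly the almost-non-zero hypothesis on $u_{m+1}$ (after reordering). The cell $[1,2]\times[1,2]$ then lies in $f_{m,n}(u)$ iff additionally the left boundary does not cut it off; since $b_1=0$ from (i), the lower path takes its first step north, so after one north step the lower path is at $(0,1)$ and the region at height between $1$ and $2$ extends at least to $x=2$ provided $b_2\ge 1$. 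I would make this precise using the Young-diagram description: the lower-left Young diagram has widths $(1+b_n,\ldots,1+b_1)$ from top to bottom, so its bottom row has width $1+b_1=1$ and its second-bottom row has width $1+b_2\geq 2$; intersecting with the upper Young diagram (which near the origin is as wide as the whole box) yields exactly the three claimed cells. Finally, (iii): by Characterization~\ref{bpcharact} and the computation at the end of the proof of Theorem~\ref{main:thm}, $\Bounce(f_{m,n}(u))$ is determined by the $x_i,y_i$, and its final steps are $(\ldots, x_1-x_0, y_1-y_0)=(\ldots,x_1,y_1)$ ending with a run of $x_1$ west steps followed by $y_1$ south steps; I would show $b_1=0$ forces $y_1=1$ (the first south run of the bounce path has length $1$) and that the configuration being minanz forces $x_1=1$ is not required — rather, the suffix $\ww\sss$ just needs $x_1\ge 1$ (always true) and $y_1=1$. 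Hence the bounce path ends $\ww^{x_1}\sss$, and in particular has suffix $\ww\sss$, completing the proof.

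I should double-check the indexing convention: in the Young-diagram definition the corner at $(0,n)$ has widths $(1+b_n,\ldots,1+b_1)$ from top to bottom, so the bottom-most row (between $y=0$ and $y=1$) has width $1+b_1$, confirming that $b_1=0$ is equivalent to the lower path starting with a north step, which is equivalent to $\Bounce$ ending with a single south step. The logical skeleton is therefore: minimality $+$ ($b_2\ge1$) $\Rightarrow$ $b_1=0$ (the hard step), and then $b_1=0$ $+$ $b_2\ge1$ $\Rightarrow$ (ii) and (iii) by unwinding the Young-diagram and bounce-path descriptions. I would present (i) first in full, then dispatch (ii) and (iii) together in a short paragraph.
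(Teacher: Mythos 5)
There is a genuine gap at the step you yourself flag as the crux of part (i). The implication you propose --- that minimality of the area together with $b_1\le b_2$ and $b_2\ge 1$ forces the bottom row to have width $1$ --- is false. Take the ribbon polyomino in $\Para_{3,3}$ whose cells are $[0,1]\times[0,1]$, $[1,2]\times[0,1]$, $[1,2]\times[1,2]$, $[1,2]\times[2,3]$, $[2,3]\times[2,3]$: it has minimal area and $(b_1,b_2,b_3)=(1,1,2)$, so $b_1\le b_2$ and $b_2\ge1$, yet $b_1=1$. What rules this out for a minanz configuration is the positivity of the \emph{top}-row heights, which you record in your setup ($a_1\ge1$) but never use in the argument for (i). The paper's proof runs through exactly this point: $a_1\ge1$ means column $1$ reaches height at least $2$, so the cell $[0,1]\times[1,2]$ lies in $f_{m,n}(u)$; a ribbon polyomino contains exactly one of $[0,1]\times[1,2]$ and $[1,2]\times[0,1]$ (the origin cell has a unique successor on the lattice path through the cell centres), so $[1,2]\times[0,1]\notin f_{m,n}(u)$, i.e.\ $1+b_1=1$; and almost-non-zero on the bottom row then pins the zero height to $u_m$. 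The same confusion resurfaces in your part (ii): membership of $[0,1]\times[1,2]$ is governed by $a_1\ge1$, not by ``$b_2\ge1$'' as you assert --- row $2$ always has width $1+b_2\ge1$, so that condition is vacuous and, in any case, it is the column constraint that is binding.

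Part (iii) also rests on an unjustified step. You reduce the claim to $y_1=1$ and assert that $b_1=0$ forces this, but $y_1=|Q_{k+1}|$ counts \emph{all} bottom vertices toppling in the last wave, and $b_1=0$ alone does not exclude a second vertex with $b_2\ge1$ from that wave (for instance $b_1=0$, $b_2=1$, $x_1=2$ gives $y_1\ge2$). Excluding this again requires minimality together with the positivity of the $a_i$ (one then exhibits a $2\times 2$ block of cells, contradicting the ribbon property); the paper instead argues by a direct case analysis on the four possible two-step suffixes of $\Bounce(f_{m,n}(u))$, eliminating each of $\ww\ww$, $\sss\ww$ and $\sss\sss$ using part (i) and the three cells established in part (ii).
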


\begin{proof}
If $u \in \Rec(D_{m,n})$ is minimal, then $\PP=f_{m,n}(u)$ is a ribbon polyomino.
A ribbon polyomino contains either the cell $[1,2]\times[0,1]$ or the cell $[0,1]\times [1,2]$, but not both.
Since $u_1,\ldots,u_{m-1}>0$, we have that $1+u_1,\ldots,1+u_{m-1}\geq 2$.
Thus $[0,1]\times [1,2]$ is in $\PP$ and $[1,2]\times[0,1]$ is not, which implies the smallest entry of $u_m,\ldots,u_{m+n-1}$ is 0.
Since $u_{m+1},\ldots,u_{m+n-1}>0$, we must have $u_m=0$ and also that $[1,2]\times[1,2] \in \PP$.

For a general $u\in \Rec(D_{m,n})$, $\Bounce(f_{m,n}(u))$ may end with either of the four suffixes: $\sss\sss$, $\sss\ww$, $\ww\sss$, or $\ww\ww$. 
Suppose that $\Bounce(f_{m,n}(u))$ ends in $\ww\ww$. One must then have $u_m>0$, which contradicts (i) above.
If $\Bounce(f_{m,n}(u))$ ends with $\sss\ww$, then consider the third last step. The suffix of $\Bounce(f_{m,n}(u))$ is either (a) $\sss\sss\ww$ or (b) $\ww\sss\ww$. 
Neither (a) nor (b) can be bounce paths in the final cells of $\Bounce(f_{m,n}(u))$ given in (ii).
Finally, suppose that $\Bounce(f_{m,n}(u))$ ends in $\sss\sss$. The two steps preceding $\sss\sss$ must then be $\ww\ww$. 
Considering the position of the leftmost cell at height $[2,3]$ shows that no such path exists with suffix $\ww\ww\sss\sss$.
\end{proof}

For a general graph $G$, minimal recurrent configurations on $G$ are in bijection with acyclic orientations of $G$ 
whose unique sink is a distinguished vertex, as is the sink in the sandpile model we are considering. 
Our restriction to minanz configurations corresponds to acyclic orientations where in addition there is a unique fixed source that shares an edge with the sink. (See Gioan and Las~Vergnas \citep{GioanLasVergnas:2005}.)

It is a consequence of (iii) and the identification of the bounce path with $\canontop(u)$ that if $u \in \Rec(D_{m,n})$ is minanz with $\CanonTop(u)=(Q_1,P_1,\ldots ,Q_{k+1})$, then $Q_{k+1}=\{m\}$.

\begin{corollary}
$|\{u \in \RecInc(D_{m,n})~:~ u \mbox{ is }\minanz\}|={m+n-4 \choose m-2}$.
\end{corollary}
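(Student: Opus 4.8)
$|\{u \in \RecInc(D_{m,n}) : u \text{ is }\minanz\}| = \binom{m+n-4}{m-2}$.\textbf{The plan.} The claim counts increasing minanz recurrent configurations, which by the equivalence relation $\sim$ and Theorem~\ref{levelareaequiv} correspond bijectively to ribbon polyominoes $\PP \in \Ribbon_{m,n}$ that additionally satisfy the cell conditions of Lemma~\ref{minanzlemma}. So the whole problem reduces to counting a restricted subclass of ribbon polyominoes. I would proceed exactly as in the proof of the immediately preceding corollary about minimal configurations: recall that every ribbon polyomino in $\Ribbon_{m,n}$ is uniquely described by a monotone lattice path through the centres of its cells, running from $(1/2,1/2)$ to $(m-1/2,n-1/2)$ with unit north and east steps, of which there are $\binom{m+n-2}{m-1}$ in total. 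The task is then to see which constraints on this path are imposed by the minanz condition.

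\textbf{Key steps.} First I would translate the three bullet points of Lemma~\ref{minanzlemma}(ii) into path language. The cells $[0,1]\times[0,1]$, $[0,1]\times[1,2]$, $[1,2]\times[1,2]$ lying in $f_{m,n}(u)$ forces the centre-path to begin at the cell $[0,1]\times[0,1]$ (centre $(1/2,1/2)$), then take a \emph{north} step into $[0,1]\times[1,2]$ (centre $(1/2,3/2)$), and then take an \emph{east} step into $[1,2]\times[1,2]$ (centre $(3/2,3/2)$). In other words, the first two steps of the centre-path are forced to be $\nn\ee$. Second, I would check that these are the \emph{only} constraints: conversely, any monotone centre-path from $(1/2,1/2)$ to $(m-1/2,n-1/2)$ whose first two steps are $\nn\ee$ defines a ribbon polyomino containing exactly those three cells, not containing $[1,2]\times[0,1]$, hence (reversing the argument in Lemma~\ref{minanzlemma}) gives a configuration with $1+a_i \ge 2$ for all $i$ and $b_1 = 0$, $b_2,\dots,b_n \ge 1$ — i.e.\ a minanz configuration. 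Here I should double-check that the suffix condition (iii) of Lemma~\ref{minanzlemma} is automatically implied by (i) and (ii) for ribbon polyominoes and imposes no further restriction on the path near its end; the proof of that lemma already shows the suffix $\ww\sss$ is forced once (i) and (ii) hold, so no independent constraint survives. Third, I would count: after deleting the two forced initial steps $\nn\ee$, what remains is an arbitrary monotone path from $(3/2,3/2)$ to $(m-1/2,n-1/2)$, i.e.\ using $m-2$ east steps and $n-2$ north steps, and the number of these is $\binom{(m-2)+(n-2)}{m-2} = \binom{m+n-4}{m-2}$, as claimed.

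\textbf{Main obstacle.} The only genuinely delicate point is verifying the converse direction cleanly — that fixing the first two centre-path steps to be $\nn\ee$ is equivalent to the minanz property, with no hidden extra condition coming from parts (i) and (iii) of Lemma~\ref{minanzlemma}. One has to be careful about the boundary behaviour when $m$ or $n$ is small (e.g.\ $m=2$ or $n=2$, where the bounding box is so thin that the forced cells nearly fill it and one should confirm the count $\binom{m+n-4}{m-2}$ still gives the right degenerate value). Everything else is a routine unwinding of the dictionary between increasing recurrent configurations, ribbon polyominoes, and centre-paths already set up in this section.
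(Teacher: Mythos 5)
Your argument is correct and is exactly the intended one: the paper states this corollary without proof, immediately after Lemma~\ref{minanzlemma}, and the evident route is the one you take --- identify increasing minanz configurations with ribbon polyominoes whose centre-path (as in the preceding corollary for minimal configurations) has its first two steps forced to be $\nn\ee$ by the three cells of Lemma~\ref{minanzlemma}(ii), and count the remaining free path with $m-2$ east and $n-2$ north steps. Your attention to the converse direction (that the forced initial steps $\nn\ee$ already imply $b_1=0$, $a_1\geq 1$ and $b_2\geq 1$, hence the minanz property, with no extra constraint coming from parts (i) and (iii) of the lemma) is precisely the detail the paper leaves implicit, and it checks out.
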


Next we consider the special case for when $m=n$. 

\subsection{Square minanz and square top-heavy minanz recurrent configurations}
Let $\BiComp_n$ be the set of square matrices whose entries partition the set 
$\{1,\ldots,n\}$ and having no rows or columns consisting of only empty sets.  
Let $\UTBiComp_n$ be the collection of upper triangular matrices in $\BiComp_n$.  Define
\begin{align*}
\SqRec_n &= \{u \in \Rec(D_{n,n})~:~ u\mbox{ is }\minanz\}.
\end{align*}

Suppose that $u \in \SqRec_n$ with $\CanonTop(u) = (Q_1,P_1,\ldots,Q_{k+1}=\{n\})$.
If vertex $v_x$ is such that $x \in P_i \cup Q_i$ then we say that {\it{$v_x$ is in the $i$th wave of $n$}} and denote this by
$\wave_u(v_x) = i$.
Let us define
\[ \THSqRec_n= \{u \in \SqRec_n~:~ \wave_u(v_{n+x}) \leq \wave_u(v_x)\mbox{ for all $1\leq x<n$}\}.\]
In words, $\THSqRec_n$ is the collection of all square minanz configurations whose canonical topplings have the property 
that bottom vertices topple on or before the wave which topples the vertex directly above.

From the definition of $\THSqRec_n$, and using the fact that $f(\THSqRec_n) \subseteq \Ribbon_{n,n}$, 
the image $f(\THSqRec_n)$ is precisely the class of ribbon polyominoes whose cells/squares are such 
that none of their centres lie beneath the line $y=x$ in the plane.

Given $u \in \Rec(D_{n,n})$ is minanz with $\CanonTop(u)=(Q_1,P_1,\ldots ,Q_{k+1}=\{n\})$, define $Q_i'=Q_i-n$ for all $i$.
So $(Q_1',\ldots,Q_k')$ and $(P_1,\ldots,P_k)$ are both ordered set partitions of $\{1,\ldots,n-1\}$.
Let $\ttm(u)$ be the $k\times k$ matrix $M$ with $M_{ij}=P_i \cap Q_j'$.
Equivalently, we have that $x \in M_{ij}$ if $x \in P_i$ and $n+x \in Q_j$ for all $x \in \{1,\ldots,n-1\}$.

\begin{example}
\begin{enumerate}
\item[(i)] Consider $u=(4,5,6,1,4,5,4,0,7,1,1,4,6,1,7) \in \SqRec_{8}$.  We have 
$$\CanonTop(u)=(\{9,15\}, \{3\},\{13\},\{2,6\},\{12\},\{1,5,7\},\{10,11,14\},\{4\},\{8\})$$
so that
\begin{align*}
(P_1,P_2,P_3,P_4) &= (\{3\},\{2,6\},\{1,5,7\},\{4\}) \\
(Q_1,Q_2,Q_3,Q_4) &= (\{9,15\},\{13\},\{12\},\{10,11,14\})\\
\Rightarrow (Q_1',Q_2',Q_3',Q_4') &= (\{1,7\},\{5\},\{4\},\{2,3,6\}).
\end{align*}
Thus $$\ttm(u) =
\left(
\begin{matrix}
\emptyset & \emptyset & \emptyset & \{3\} \\
\emptyset & \emptyset & \emptyset & \{2,6\} \\
\{1,7\} & \{5\} & \emptyset & \emptyset \\
\emptyset & \emptyset & \{4\} & \emptyset 
\end{matrix}
\right).  $$
\item[(ii)]
Consider $u=(4,3,4,1,0,2,1,4,1) \in \SqRec_{5}$. We have $$\CanonTop(u)=(\{8\},\{1,3\},\{6\},\{2\},\{7,9\},\{4\},\{5\}).$$
From this $(P_1,P_2,P_3)=(\{1,3\},\{2\},\{4\})$, $(Q_1,Q_2,Q_3)=(\{8\},\{6\},\{7,9\})$ and so $(Q_1',Q_2',Q_3')=(\{3\},\{1\},\{2,4\})$.
Thus 
$$\ttm(u) =
\left(
\begin{matrix}
\{3\} & \{1\} & \emptyset \\
\emptyset & \emptyset & \{2\} \\
\emptyset & \emptyset & \{4\} 
\end{matrix}
\right).  $$
\end{enumerate}
\end{example}

\begin{theorem} \label{minanzmatrix}
{\protect{\rm{(i)}}} $\ttm:\SqRec_n \to \BiComp_n$ is a bijection.
{\protect{\rm{(ii)}}} $\ttm:\THSqRec_n \to \UTBiComp_n$ is a bijection.
\end{theorem}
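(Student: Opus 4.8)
The plan is to verify that $\ttm$ is well-defined into $\BiComp_n$, then construct an explicit inverse, and finally check that the upper-triangularity condition matches the top-heavy condition. For well-definedness of part (i): given $u \in \SqRec_n$ with $\CanonTop(u) = (Q_1, P_1, \ldots, Q_{k+1} = \{n\})$, Lemma~\ref{minanzlemma}(iii) (together with the remark following it) guarantees $Q_{k+1} = \{m\} = \{n\}$, so $(Q_1', \ldots, Q_k')$ and $(P_1, \ldots, P_k)$ are genuinely ordered set partitions of $\{1, \ldots, n-1\}$ of the same length $k$. Since the $P_i$ partition $\{1,\ldots,n-1\}$ and the $Q_j'$ partition $\{1,\ldots,n-1\}$, the sets $M_{ij} = P_i \cap Q_j'$ partition $\{1,\ldots,n-1\}$ as $(i,j)$ ranges over $\{1,\ldots,k\}^2$. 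Wait --- $\BiComp_n$ partitions $\{1,\ldots,n\}$, so I should be careful: the matrix entries partition $\{1,\ldots,n-1\}$ here, which is consistent with $\BiComp_n$ after relabelling (or one notes that the "missing" element $n$ corresponds to the forced $Q_{k+1}=\{n\}$ and $P$ having $m=n$ absent). This bookkeeping point should be stated cleanly up front. Each $P_i$ is nonempty, so row $i$ of $M$ is not all-empty; each $Q_j'$ is nonempty, so column $j$ is not all-empty. Hence $\ttm(u) \in \BiComp_n$.

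For the inverse, I would argue that $M \in \BiComp_n$ determines $u$. From $M$ one recovers $P_i = \bigcup_j M_{ij}$ and $Q_j' = \bigcup_i M_{ij}$ (both nonempty by the no-empty-row/column condition), hence $(P_1,\ldots,P_k)$, $(Q_1,\ldots,Q_k)$ with $Q_j = Q_j' + n$, and we append $Q_{k+1} = \{n\}$. This data is exactly a point of $\SetPara_{n,n}$ restricted to the minanz locus: the sizes $\type(P) = \ebounce$, $\type(Q) = \obounce$ of a ribbon polyomino $\PP \in \Ribbon_{n,n}$ whose bounce path has suffix $\ww\sss$ (matching $Q_{k+1}=\{n\}$), and $\PP$ is itself determined by these size sequences since a ribbon polyomino is pinned down by its bounce composition via Characterization~\ref{bpcharact}. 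Then Theorem~\ref{main:thm}(b) and the Corollary giving the bijection $\RtoSP : \Rec(D_{n,n}) \to \SetPara_{n,n}$ let me read off the unique $u \in \Rec(D_{n,n})$ with $\CanonTop(u) = (Q_1, P_1, \ldots, Q_{k+1})$; the fact that this $u$ is minanz follows because the bounce suffix $\ww\sss$ forces $f_{n,n}(u) \in \Ribbon_{n,n}$ (minimality) and $u_n = 0$ with all other entries positive (cross-referencing Lemma~\ref{minanzlemma} in reverse). So $\ttm$ is a bijection.

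For part (ii): I need $\ttm(u)$ upper triangular $\iff$ $\wave_u(v_{n+x}) \le \wave_u(v_x)$ for all $1 \le x < n$. Unwind the definitions: $x \in M_{ij}$ means $x \in P_i$ and $n+x \in Q_j$, i.e. $\wave_u(v_x) = i$ and $\wave_u(v_{n+x}) = j$ (using that the waves are indexed so that $v_x \in P_i$ sits in wave $i$ and $v_{n+x} \in Q_j$ sits in wave $j$ --- here I should double-check the indexing convention in the paper's definition of $\wave_u$, where $v_x \in P_i \cup Q_i$ gives $\wave_u(v_x) = i$; for a bottom vertex $v_{n+x}$, it lies in some $Q_j$, so $\wave_u(v_{n+x}) = j$). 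Then $M_{ij} \ne \emptyset$ with $j > i$ for some entry is exactly the failure of $\wave_u(v_{n+x}) \le \wave_u(v_x)$ for the $x$ in that entry; conversely $M$ upper triangular ($M_{ij} = \emptyset$ whenever $i > j$) says every $x$ has $\wave_u(v_x) = i \le j = \wave_u(v_{n+x})$. Hmm --- I need to be careful about the direction of the inequality versus upper versus lower triangular; the example in the text ($u \in \SqRec_5$, giving an upper-triangular $\ttm(u)$ for a top-heavy $u$) should be used as a sanity check to fix the convention, and I would state the translation precisely rather than hand-wave it.

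The main obstacle I anticipate is not conceptual but notational: getting the three indexing conventions to line up --- the parity convention relating $\bounce$ to $\ebounce/\obounce$, the shift $Q_j' = Q_j - n$, and the wave-index convention --- so that "upper triangular" lands on the correct side and so that the off-by-one between partitioning $\{1,\ldots,n-1\}$ (the matrix entries) versus $\{1,\ldots,n\}$ (the definition of $\BiComp_n$) is handled consistently. I would resolve this once at the start with a clean statement of the dictionary and then the rest of the proof is a direct translation through the already-established bijection $\RtoSP$, so the substantive content is really just Theorem~\ref{main:thm} plus Lemma~\ref{minanzlemma}.
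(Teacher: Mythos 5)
Your proposal is correct in substance and, for the well-definedness of $\ttm$ and for part (ii), follows essentially the same route as the paper: the paper likewise notes that $(P_1,\ldots,P_k)$ and $(Q_1',\ldots,Q_k')$ are ordered set partitions of $\{1,\ldots,n-1\}$ of equal length, so each $x$ lies in exactly one $M_{ij}=P_i\cap Q_j'$ and no row or column is all-empty, and it proves (ii) by translating upper-triangularity into $Q_i'\subseteq P_1\cup\cdots\cup P_i$ for all $i$, which is your elementwise condition restated. Where you genuinely diverge is the inverse. The paper defines $\mu(M)$ by explicit height formulas ($u_n=0$, $u_{n+x}=n-1-(p_0+\cdots+p_{i-1})$ for $x\in Q_i$, and a companion formula for $x\in P_i$) and checks directly that $\mu(M)$ is minimal and almost non-zero and that $\ttm$ and $\mu$ are mutually inverse; you instead reconstruct $u$ through $\RtoSP$, using that a ribbon polyomino is determined by its bounce composition. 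Your route reuses established machinery, but it leans on two points you should make explicit: that a ribbon polyomino in $\Ribbon_{n,n}$ with bounce composition $(q_1,p_1,\ldots,p_k,q_{k+1}=1)$ exists and is unique for every choice of positive parts with the correct sums (implicit in the $\atb^{-1}$ construction of Theorem~\ref{jeffrey}, but not available as a citable statement at this point in the paper), and that the reconstructed $u$ is almost non-zero --- Lemma~\ref{minanzlemma} is stated in one direction only, so ``in reverse'' requires the short computation that bottom vertices toppling in waves $1,\ldots,k$ have positive height while the unique vertex in $Q_{k+1}$ has height $0$; the paper's explicit formulas make this immediate and are reused in Theorem~\ref{recs_to_posets}, which is what they buy. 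Your two bookkeeping worries are well placed rather than pedantic: the entries of $\ttm(u)$ partition $\{1,\ldots,n-1\}$, not $\{1,\ldots,n\}$ as the displayed definition of $\BiComp_n$ suggests, and unwinding $M_{ij}\neq\emptyset$ gives $\wave_u(v_x)=i$ and $\wave_u(v_{n+x})=j$, so upper-triangularity ($i\le j$) corresponds to $\wave_u(v_x)\le\wave_u(v_{n+x})$ --- the opposite inequality to the one displayed in the definition of $\THSqRec_n$, and indeed the paper's own proof of (ii) derives this inequality and then silently flips it in its last line. Your instinct to fix the convention against the worked example before asserting the equivalence is exactly right.
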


\begin{proof}
For (i) let $u \in \SqRec_n$. We have $\CanonTop(u) = (Q_1,P_1,\ldots,Q_{k+1}=\{n\})$.
From the nature of the toppling process
$(Q_1',\ldots,Q_k') \vDash \{1,\ldots ,n-1\} \mbox{ and } (P_1,\ldots,P_k) \vDash \{1,\ldots,n-1\}$.
Since every element $x \in \{1,\ldots,n-1\}$ appears in exactly one of the $P$ sets, $P_i$ say, and appears in exactly one of the $Q'$ sets, $Q_j'$ say,
we have that $x \in M_{ij}=P_i \cap Q_j'$. There can be no row or column of empty sets for this reason. Thus $M = \ttm(u) \in \BiComp_n$.

Let us now define a function $\mu$ on $\BiComp_n$.
Suppose that $M \in \BiComp_n$ with $\dim(M)=k$. 
For all $i \in \{1,\ldots,k\}$, let 
\begin{align*}
P_i &= M_{i1} \cup \ldots \cup M_{ik} \\
Q_i'&= M_{1i} \cup \ldots \cup M_{ki}.
\end{align*}
Define $Q_i= Q_i' + n$ for all $i \in \{1,\ldots,k\}$ and $Q_{k+1}=\{n\}$.
Let $p_0=q_0=0$, $p_i=|P_i|$ and $q_i=|Q_i|$.
Let $u = \mu(M)$ be the configuration where $u_n=0$,
$$
\begin{array}{lcll}
u_{n+x} &=& 
n-1-(p_0+\cdots + p_{i-1}) & \mbox{ for all } x \in Q_i\mbox{ and } i \in \{1,\ldots,k\} \\
u_x &=& n-1-(q_0+\cdots + q_{i}) & \mbox{ for all } x \in P_i \mbox{ and } i \in \{1,\ldots,k\}.
\end{array}
$$
The value $\level(u)$ is minimal. 
Since $Q_{k+1}=\{n\}$, $u_n=n-1-(p_0+\cdots+p_k)=0$, and $u_i>0$ for all other $i$, hence $u$ is almost non-zero. 
Thus $u=\mu(M) \in \SqRec_n$.
It is straightforward to check that $\ttm(\mu(M))=M$ and $\mu(\ttm(u))=u$ for all $M \in \BiComp_n$ and $u \in \SqRec_n$, respectively. 
Thus $\mu=\ttm^{-1}$ and $\ttm:\SqRec_n \to \BiComp_n$ is a bijection.

Part (ii): from part (i) we have that $\ttm(\THSqRec_n) \subseteq \BiComp_n$.
Let $u \in \SqRec_n$, $M=\ttm(u)$ and suppose that $\dim(M)=k$. The matrix $M$ is upper-triangular (i.e. $M \in \BiComp_n$) iff
$Q_i' \subseteq P_1\cup \cdots \cup P_i$ for all $1\leq i \leq k$.
This condition is equivalent to `$\wave_u(v_{n+x})=i$ $\Rightarrow$ $\wave_u(v_x)\leq i$ for all $1\leq i \leq k$ and $1\leq x \leq n-1$', which in turn is equivalent to `$\wave_u(v_{n+x}) \leq \wave_u(v_x)$ for all $1\leq x \leq n-1$.
\end{proof}

\begin{corollary}
$|\{u \in \Rec(D_{n,n})~:~ u\mbox{ is }\minanz\}|~ =~ \sum_k S(n-1,k)^2$, where $S(n,k)$ are the Stirling numbers of the 2nd kind.
\end{corollary}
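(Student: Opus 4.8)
The plan is to count $\{u\in\Rec(D_{n,n}): u\text{ is }\minanz\}=\SqRec_n$ through the bijection $\ttm$ supplied by Theorem~\ref{minanzmatrix}(i), which identifies this set with $\BiComp_n$; it therefore suffices to establish $|\BiComp_n|=\sum_{k}S(n-1,k)^2$. By the construction of $\ttm$, the non-empty entries of a matrix in $\BiComp_n$ partition $\{1,\ldots,n-1\}$, and I would enumerate these matrices by recording the set partition carried by their rows together with the one carried by their columns, so that the Stirling-square shape of the answer is forced by the two independent partition data.

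Concretely, I would first stratify $\BiComp_n$ by the dimension $k$ of the matrix, writing $|\BiComp_n|=\sum_{k\ge1}b_{n,k}$ where $b_{n,k}$ counts the $k\times k$ members. For a matrix $M$ of dimension $k$, put $R_i=\bigcup_j M_{ij}$ and $C_j=\bigcup_i M_{ij}$. Since $M$ has no empty row and no empty column, $\{R_1,\ldots,R_k\}$ and $\{C_1,\ldots,C_k\}$ are partitions of $\{1,\ldots,n-1\}$ into exactly $k$ non-empty blocks, and because each element lies in a unique row and a unique column the matrix is recovered cellwise by $M_{ij}=R_i\cap C_j$. This is precisely the reconstruction underlying the inverse $\mu=\ttm^{-1}$ built in the proof of Theorem~\ref{minanzmatrix}, where the row data came from $(P_1,\ldots,P_k)$ and the column data from $(Q_1',\ldots,Q_k')$. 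Counting the two sides, each partition of an $(n-1)$-element set into $k$ blocks contributes a factor $S(n-1,k)$, yielding $b_{n,k}=S(n-1,k)^2$; summing over $k$ then produces the asserted $\sum_k S(n-1,k)^2$.

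The step I expect to be the main obstacle is the bookkeeping of the block indices in this reconstruction. The rows and columns of a matrix are labelled, so a priori each side of the decomposition is a surjection onto $\{1,\ldots,k\}$ rather than an unlabelled partition, and the heart of the argument is deciding whether the per-dimension count is governed by $S(n-1,k)$ or by its ordered refinement, i.e. whether the labels on the $R_i$ and $C_j$ are genuinely free or are pinned down by the structure of $\BiComp_n$. I would attack this by exploiting the canonical wave-ordering of $\CanonTop(u)=(Q_1,P_1,\ldots,Q_{k+1})$ that defines $\ttm$, which orders the blocks $P_i$ and $Q_i'$ intrinsically, and by matching the candidate formula against the small cases $n=2,3$ before assembling the general bijection. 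Reconciling the labelled matrix with the unlabelled Stirling count is the delicate point on which the whole enumeration turns, and it is exactly the place where one must verify that no admissible relabelling has been over- or under-counted.
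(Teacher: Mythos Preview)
Your reduction via $\ttm$ to counting the bicomposition matrices and the row/column decomposition $M\mapsto\bigl((R_i)_i,(C_j)_j\bigr)$ with $M_{ij}=R_i\cap C_j$ are exactly the paper's intended route, and you have correctly located the only non-trivial step: whether each factor is $S(n-1,k)$ or its ordered refinement $k!\,S(n-1,k)$. The resolution you propose, however, does not work. The wave ordering in $\CanonTop(u)=(Q_1,P_1,\ldots,Q_{k+1})$ does pin down the labels on the blocks for \emph{that particular} $u$, but it imposes no restriction on which ordered pairs of ordered set partitions occur as $u$ varies over $\SqRec_n$. Indeed, the inverse map $\mu$ constructed in the proof of Theorem~\ref{minanzmatrix} accepts \emph{any} pair of ordered set partitions $(P_1,\ldots,P_k)$, $(Q'_1,\ldots,Q'_k)$ of $\{1,\ldots,n-1\}$ and returns a configuration in $\SqRec_n$. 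So every one of the $k!$ orderings of each underlying set partition is realised, and your bijection gives $b_{n,k}=\bigl(k!\,S(n-1,k)\bigr)^2$, not $S(n-1,k)^2$.

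Your own proposed sanity check at $n=3$ already detects this. The increasing minanz configurations in $\RecInc(D_{3,3})$ are $(1,1,0,2,2)$ and $(1,2,0,1,2)$, and permuting within top and bottom rows (with $u_3=0$ forced by Lemma~\ref{minanzlemma}) yields $1+4=5$ elements of $\SqRec_3$; this matches $\sum_k (k!)^2 S(2,k)^2=1+4=5$, not $\sum_k S(2,k)^2=2$. In short, your argument is correct and complete once the labelled count is used, but it proves $|\SqRec_n|=\sum_k k!^2\,S(n-1,k)^2$. The printed corollary is missing the factor $(k!)^2$; the difficulty you anticipated in ``reconciling the labelled matrix with the unlabelled Stirling count'' cannot be overcome, because there is nothing to reconcile.
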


As we mentioned in the introduction, the matrices $\UTBiComp_n$ were the subject of the two papers 
~\citep{ClaessonDukesKubitzke:2011,DukesJelinekKubitzke:2011} which related 
these matrices to (2+2)-free partially ordered sets (posets) on the set $\{1,\ldots,n\}$ (also known as interval orders).
There are several other rich connections between these objects and labelled Stoimenow matchings, ascent sequences and pattern avoiding permutations.
The remainder of this section is dedicated to showing a direct link between (2+2)-free posets and the recurrent configurations of $\THSqRec_n$.
This link was the original motivation behind this paper.

Let $P=(P,\preceq_P)$ be a partially ordered set on $\{1,\ldots,n\}$.  We denote by $P^{\star}$ the dual poset of $P$.
Given $x \in P$, let $D(x)=\{y \in P: y \prec x\}$ be the down set of $x$. 
A defining property of (2+2)-free posets is the following: a poset $P$ is (2+2)-free iff the set of all down-sets of $P$ may be linearly ordered by inclusion.
Let $\Posets_n$ be the set of all (2+2)-free posets on the set $\{1,\ldots,n\}$.
Every $P \in \Posets_n$ can be uniquely represented as a sequence of sets
$P=(D_0(P),\ldots,D_{k-1}(P); L_0(P),\ldots,L_{k-1}(P))$ where $\emptyset = D_0(P) \subset D_1(P)\subset \cdots \subset D_{k-1}(P)$
and $(L_0(P),\ldots,L_{k-1}(P))$ is an ordered set partition of $\{1,\ldots,n\}$ such that if $x \in L_i(P)$ then $D(x)=D_i(P)$.

\begin{definition}
Given $M \in \UTBiComp_n$, let $P=(P,\preceq_P)=\mtp(M)$ be the partially ordered set 
on $\{1,\ldots,n\}$ where $x\prec_P y$ iff there exist $i,j,i',j'$ with
$x \in M_{ij}$, $y \in M_{i'j'}$ and $j<i'$.
\end{definition}

We refer the reader to the papers ~\citep{ClaessonDukesKubitzke:2011,DukesJelinekKubitzke:2011} if there is any confusion surrounding the definitions or terminology.

\begin{example}\label{mtp_example}
Let $M=\left(\begin{matrix} 
\{3\} & \{7,2\} & \emptyset \\
\emptyset & \{5\} & \{1\} \\
\emptyset & \emptyset & \{4,6\}
\end{matrix}\right)$. Then $P=(P,\preceq)$ is the poset on $\{1,2,\ldots,7\}$ with the following relations:
$3\prec_P 1,4,5,6$ and $2,5,7 \prec_P 4,6$.

\centerline{\includegraphics{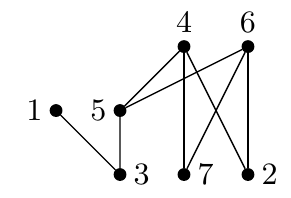}}
\end{example}

\begin{theorem}[Dukes et al.~\citep{DukesJelinekKubitzke:2011}]
Let $\Posets_n$ be the set of all (2+2)-free posets on $\{1,\ldots,n\}$. Then
$\mtp:\UTBiComp_n \to \Posets_n$ is a bijection.
\end{theorem}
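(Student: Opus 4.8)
The plan is to check three things: that $\mtp$ is well defined (its output is a poset), that this poset is $(2+2)$-free, and that the map admits a two-sided inverse built from the canonical representation of a $(2+2)$-free poset. For the first point, given $M \in \UTBiComp_n$ and an element $x$ lying in cell $M_{ij}$, write $\mathrm{row}(x)=i$ and $\mathrm{col}(x)=j$; since $M$ is upper triangular we have $\mathrm{row}(x)\le\mathrm{col}(x)$ for every $x$, and by definition $x\prec_P y$ precisely when $\mathrm{col}(x)<\mathrm{row}(y)$. Then irreflexivity is immediate because $\mathrm{col}(x)<\mathrm{row}(x)$ cannot hold, transitivity follows from the chain $\mathrm{col}(x)<\mathrm{row}(y)\le\mathrm{col}(y)<\mathrm{row}(z)$, and antisymmetry of the reflexive closure is then automatic; so $P=\mtp(M)$ is a genuine partial order on $\{1,\dots,n\}$.

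Next I would compute all the down-sets of $P$, both to establish $(2+2)$-freeness and to pin down the canonical data of $P$. Writing $\mathrm{col}_j(M)$ for the $j$th column of $M$ viewed as a subset of $\{1,\dots,n\}$, and $k=\dim(M)$, one sees that $D(x)=\bigcup_{j\le \mathrm{row}(x)-1}\mathrm{col}_j(M)$, so $D(x)$ depends only on $\mathrm{row}(x)$. Since no row of $M$ is empty, all values $i=1,\dots,k$ are realised, and since distinct columns are disjoint and nonempty the down-sets form a strict chain $\emptyset\subsetneq\mathrm{col}_1(M)\subsetneq\cdots\subsetneq\mathrm{col}_1(M)\cup\cdots\cup\mathrm{col}_{k-1}(M)$ of length $k$. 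By the stated characterisation $P$ is $(2+2)$-free, with canonical data $D_j(P)=\mathrm{col}_1(M)\cup\cdots\cup\mathrm{col}_j(M)$ for $0\le j\le k-1$ and $L_j(P)$ equal to the $(j+1)$st row of $M$ (the elements $x$ with $\mathrm{row}(x)=j+1$); hence $\mtp(M)\in\Posets_n$.

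Guided by this formula I would then construct the inverse $\Phi:\Posets_n\to\UTBiComp_n$. Given $P=(D_0,\dots,D_{k-1};L_0,\dots,L_{k-1})$, let $\Phi(P)$ be the $k\times k$ matrix whose $(i+1,j)$ entry is $L_i\cap E_j$, where $E_j=D_j\setminus D_{j-1}$ for $1\le j\le k-1$ and $E_k=\{1,\dots,n\}\setminus D_{k-1}$. The crucial observation is that $E_1,\dots,E_k$ partition $\{1,\dots,n\}$ into nonempty blocks, the last block $E_k$ being exactly the set of maximal elements of $P$ (an element lies in some strict down-set iff it is non-maximal, and every strict down-set is contained in $D_{k-1}$); together with the fact that the $L_i$ are nonempty and partition $\{1,\dots,n\}$, this shows $\Phi(P)$ has no empty row or column. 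Moreover a level-$i$ element $x$ satisfies $x\notin D(x)=D_i\supseteq E_j$ for every $j\le i$, so $x$ lies in a column of index $\ge i+1$, i.e.\ $\Phi(P)$ is upper triangular, hence $\Phi(P)\in\UTBiComp_n$. Finally I would verify the two compositions. For $\Phi\circ\mtp$: by the computation above, $\Phi(\mtp(M))$ has the same rows as $M$ and its columns are the sets $E_j=D_j(\mtp(M))\setminus D_{j-1}(\mtp(M))=\mathrm{col}_j(M)$ (with $E_k=\mathrm{col}_k(M)$), and since each entry of a bicomposition matrix is determined by its row and column position, $\Phi(\mtp(M))=M$. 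For $\mtp\circ\Phi$: in $\mtp(\Phi(P))$ an element $x\in E_j$ of level $i$ has $x\prec y$ iff $j\le\mathrm{lev}(y)$, and since the $D_\ell$ form a chain with $x\in D_j\setminus D_{j-1}$ this is exactly the statement $x\in D(y)$ (the case $j=k$ meaning $x$ is maximal in both posets), so $\mtp(\Phi(P))=P$.

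I expect the only step that is not routine unwinding of definitions to be the column assignment in the construction of $\Phi$. One has to recognise that the columns of $M$ must encode the successive differences $D_j\setminus D_{j-1}$ of the down-set chain of $P$, with the final column reserved for the maximal elements; this is precisely what guarantees that no column of $\Phi(P)$ is empty (so that $\Phi(P)$ really lies in $\UTBiComp_n$) and what makes both compositions collapse to the identity. Everything else reduces to bookkeeping with the definitions of $\mtp$, of $\UTBiComp_n$, and the known description of $(2+2)$-free posets via chains of down-sets.
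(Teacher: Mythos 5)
The paper does not prove this statement: it is imported verbatim from the cited reference \citep{DukesJelinekKubitzke:2011}, so there is no in-paper proof to compare against. Your self-contained argument is correct. The key computation --- that for $M\in\UTBiComp_n$ the down-sets of $\mtp(M)$ are exactly the prefix-unions of the columns of $M$, so that $D_j(P)$ is the union of the first $j$ columns and $L_j(P)$ is row $j+1$ --- is exactly the standard route, and it is also precisely the identification ($P_i=L_{i-1}(P)$, $Q_i'=D_i(P)\setminus D_{i-1}(P)$) that the present paper asserts without justification inside the proof of Theorem~\ref{recs_to_posets}; your derivation substantiates that step. The only points worth double-checking are the ones you flagged: that upper triangularity is what makes $\prec_P$ irreflexive and transitive, that non-emptiness of rows and columns makes the down-set chain strict and of length exactly $\dim(M)$, and that the last column of the inverse must be reserved for the complement of $D_{k-1}(P)$ (the maximal elements) so that no column of $\Phi(P)$ is empty. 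All of these are handled correctly in your write-up.
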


In the next theorem we see that the composition of the functions $\ttm$ and $\mtp$ is such that the recurrent configuration can be easily read from the level structure of the associated poset.

\begin{theorem} \label{recs_to_posets}
Let $u \in \THSqRec_n$ with $\CanonTop(u)=(Q_1,P_1,\ldots,Q_{k+1}=\{n\})$ and $(P,\preceq)=\mtp(\ttm(u))$.  Then 
$$
\begin{array}{lcll}
u_{x+n} &=& 
|D_{j+1}(P^{\star})| & \mbox{ for all } x \in L_j(P^{\star}) \mbox{ and }j \in \{0,\ldots,k-1\} \\
u_{x} &=& 
n-|D_{j+1}(P)| & \mbox{ for all } x \in L_j(P) \mbox{ and }j \in \{0,\ldots , k-1\} .
\end{array}
$$
\end{theorem}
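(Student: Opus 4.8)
The plan is to trace an element $x \in \{1,\dots,n-1\}$ through the chain of bijections $\ttm$ and $\mtp$, and to match up the wave-index of $v_x$ (resp. $v_{x+n}$) in the canonical toppling with the level-index of $x$ in the poset $P$ (resp. $P^{\star}$); once the indices are identified, the height formulas in Theorem~\ref{recs_to_posets} will follow from the explicit description of $\ttm^{-1}=\mu$ given in the proof of Theorem~\ref{minanzmatrix}. First I would fix $u \in \THSqRec_n$, write $M = \ttm(u)$, which by Theorem~\ref{minanzmatrix}(ii) is upper triangular, and set $(P,\preceq) = \mtp(M)$, with its canonical sequence representation $P = (D_0(P),\dots,D_{k-1}(P);L_0(P),\dots,L_{k-1}(P))$. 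The key translation is: if $x \in M_{ij}$ then $x \in P_i$ and $n+x \in Q_j$, i.e.\ $\wave_u(v_x)=i$ (the row) and $\wave_u(v_{x+n})=j$ (the column). So I need to show that the row index of $x$ in $M$ equals the $P^{\star}$-level index of $x$ up to the obvious shift, and the column index of $x$ in $M$ equals the $P$-level index of $x$ up to the obvious shift.

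The heart of the argument is the following two claims about $\mtp$. From the definition of $\mtp$, $x \prec_P y$ iff there are $i,j,i',j'$ with $x \in M_{ij}$, $y \in M_{i'j'}$ and $j < i'$; since $M$ is upper triangular the nonzero entries satisfy $i \le j$, so the down-set of any $y \in M_{i'j'}$ is exactly $D(y) = \bigcup_{i \le j < i'} M_{ij}$, which depends only on the column index $i'$ of $y$. Hence the $L$-blocks of $P$ are unions of columns of $M$: $L_{j'-1}(P)$ (under a suitable indexing) is the set of entries whose column index is $i'=j'$, and the strictly increasing chain $D_0(P)\subset D_1(P)\subset\cdots$ is indexed by the column index. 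Dually, $D(x)$ in $P^{\star}$ is $\{y : x \prec_P y\} = \bigcup_{j' > j}\big(\text{entries in rows }>j\big)$, which depends only on the \emph{row} index of $x$; so the $L$-blocks of $P^{\star}$ are unions of rows of $M$, indexed by the row index. Thus $x \in L_j(P) \Leftrightarrow$ column index of $x$ is $j+1$ (with the convention that levels are numbered from $0$), and $x \in L_j(P^{\star}) \Leftrightarrow$ row index of $x$ is $j+1$; combined with the wave/matrix dictionary this gives $\wave_u(v_x) = j+1$ when $x \in L_j(P^{\star})$ and $\wave_u(v_{x+n}) = j+1$ when $x \in L_j(P)$.

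Now I would plug these into the formula for $u = \mu(M)$ from the proof of Theorem~\ref{minanzmatrix}. There, for $x \in P_i$ one has $u_x = n-1-(q_0+\cdots+q_i)$ where $q_\ell = |Q_\ell| = |Q'_\ell|$, and $u_{x+n} = n-1-(p_0+\cdots+p_{i-1})$ for $x$ with $n+x \in Q_i$, where $p_\ell = |P_\ell|$. Using the identifications above, $q_0+\cdots+q_i = |Q'_1 \cup\cdots\cup Q'_i| = |M_{\le\cdot,\le i}|$; I claim this equals $|D_{i}(P)|$ shifted appropriately, and after reconciling the two indexing conventions (the $\mu$-formula starts its partial sums at $0$ with $p_0=q_0=0$, while the poset levels are numbered $0,\dots,k-1$ with $D_0=\emptyset$), one gets $q_0+\cdots+q_i = |D_{i}(P)|$ in the poset's indexing, yielding $u_x = n-1-(|D_{j+1}(P)|-1) = n-|D_{j+1}(P)|$ for $x\in L_j(P)$. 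The dual manipulation with the $p$'s and the column/row switch gives $u_{x+n} = |D_{j+1}(P^{\star})|$ for $x \in L_j(P^{\star})$. The routine but slightly delicate part — and the main obstacle — is getting the index bookkeeping exactly right: matching the off-by-one between $\mu$'s partial-sum convention and the poset-level numbering, and confirming that under $\mtp$ the columns of an upper-triangular $M$ really do index the $P$-levels while the rows index the $P^{\star}$-levels (rather than the reverse), which is where one must be careful about the direction of the inequality $j<i'$ in the definition of $\mtp$ and the meaning of "dual". I would verify the indexing on the two worked examples (Example with $u=(4,3,4,1,0,2,1,4,1)$, and the $\mtp$ Example~\ref{mtp_example}) as a sanity check before writing the final clean argument.
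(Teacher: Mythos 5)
Your overall strategy coincides with the paper's: identify the level structure of $P$ and of $P^{\star}$ with the rows and columns of $M=\ttm(u)$ (equivalently, with the waves of the canonical toppling), then substitute into the explicit formula for $\mu=\ttm^{-1}$ from the proof of Theorem~\ref{minanzmatrix}. The problem is that the identification you commit to is backwards, and this identification is the heart of the argument, not routine bookkeeping. From the definition of $\mtp$, the down-set of $y\in M_{i'j'}$ is the union of all entries lying in columns $1,\ldots,i'-1$, so it is determined by the \emph{row} index $i'$ of $y$ (you call $i'$ the column index, and the mislabel propagates). Since no column of $M\in\UTBiComp_n$ is empty, two elements have the same down-set iff they lie in the same row; hence $L_j(P)=P_{j+1}$ is the union of row $j+1$, and dually the levels of $P^{\star}$ are the columns. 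Moreover you have dropped the order reversal that dualizing forces: the correct statement is $L_j(P^{\star})=Q'_{k-j}$, so $\wave_u(v_{x+n})=k-j$ for $x\in L_j(P^{\star})$, not $j+1$. With the identifications you actually state ($\wave_u(v_{x+n})=j+1$ for $x\in L_j(P)$ and $\wave_u(v_{x})=j+1$ for $x\in L_j(P^{\star})$), feeding the wave indices into $\mu$ yields formulas for $u_{x+n}$ indexed by $P$-levels and for $u_x$ indexed by $P^{\star}$-levels, i.e.\ the roles of $P$ and $P^{\star}$ come out interchanged and the indices unshifted; the computation cannot close to the stated theorem. You do flag this as the point to verify, but the sanity check you defer (Example~\ref{mtp_example}, where $L_0(P)=\{2,3,7\}$ is the first row and $L_0(P^{\star})=\{1,4,6\}$ is the last column) would have shown that both the row/column swap and the missing reversal are real, independent errors, not a relabeling.

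A second, smaller gap is the off-by-one you assert rather than derive. With the correct identification one has exactly $q_1+\cdots+q_{j+1}=|Q'_1\cup\cdots\cup Q'_{j+1}|=|D_{j+1}(P)|$, with no shift, so your two displayed claims in that sentence ($q_0+\cdots+q_i=|D_i(P)|$ and $u_x=n-1-(|D_{j+1}(P)|-1)$) contradict each other. The reconciliation actually comes from telescoping against the extra block $Q_{k+1}=\{n\}$: the paper rewrites the height of $v_x$ for $x\in P_i$ as $q_{i+1}+\cdots+q_k+q_{k+1}=|D_k(P)\setminus D_i(P)|+1=n-|D_i(P)|$, using the convention $D_k(P)=\{1,\ldots,n-1\}$, and the dual computation for $u_{x+n}$ telescopes the $p$'s against $D(P^{\star})$ with no such correction term. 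That asymmetry between the two formulas (the $+1$ appears only on the top row) is exactly what your ``reconcile the conventions'' step needs to produce, and it does not. Both defects are repairable within your framework, but as written the proposal does not establish the theorem.
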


\begin{proof}
Let $u \in \THSqRec_n$ with $\CanonTop(u)=(Q_1,P_1,\ldots,Q_{k+1}=\{n\})$. Let $M=\ttm(u)$ and $(P,\preceq)=\mtp(M)$.
We have $\dim(M)=k$, $P_i$ is the union of entries in row $i$ of $M$, and $Q_i$ is the union of entries in column $i$ of $M$ with all entries increased by $n$, for all $i \in \{1,\ldots,k\}$.
The poset $P=(P,\preceq_P)$ is specified by;
$$P=(D_0(P),\ldots,D_{k-1}(P);L_0(P),\ldots,L_{k-1}(P)).$$
The dual $P^{\star}$ of $P$ is given by 
$$P^{\star}=(D_0(P^{\star}),\ldots,D_{k-1}(P^{\star});L_0(P^{\star}),\ldots,L_{k-1}(P^{\star}))$$
where $L_{k-i-1}(P^{\star}) = D_{i+1}(P)\backslash D_i(P)$ and $D_{k-1-i}(P^{\star}) = L_{i+1}(P)\cup \ldots \cup L_{k-1}(P)$
for all $i \in \{0,\ldots ,k-1\}$. We use the convention $D_k(P)=\{1,\ldots,n-1\}$.

From Theorem~\ref{minanzmatrix} we have $u_{n}=0$, 
$$
\begin{array}{lcll}
u_{x+n} &=& 
n-1-(p_0+\cdots + p_{i-1}) & \mbox{ for all } x \in Q_i\mbox{ and } i \in \{1,\ldots,k\}, \mbox{ and} \\
u_x &=& n-1-(q_0+\cdots + q_{i}) & \mbox{ for all } x \in P_i \mbox{ and } i \in \{1,\ldots,k\}.
\end{array}
$$
For all $i \in \{1,\ldots,k\}$, $P_i=L_{i-1}(P)$ and $Q_i= D_i(P)\backslash D_{i-1}(P)=L_{k-i}(P^{\star})$.
For all $i \in \{1,\ldots,k\}$, 
\begin{align*}
n-1-(p_0+\cdots+p_{i-1}) 
&= p_i+\cdots +p_{k} \\
&= |L_{i-1}(P)\cup \ldots \cup L_{k-1}(P)| \\
&= |D_{k-1-i}(P^{\star})|\\
\noalign{and}
n-1-(q_0+\cdots+q_i) 
&= q_{i+1}+\cdots + q_k + q_{k+1} \\
&= |Q_{i+1}\cup \ldots \cup Q_{k}| + q_{k+1} \\
&= |(D_{i+1}(P)\backslash D_i(P)) \cup \ldots \cup (D_k(P)\backslash D_{k-1}(P))| +1 \\
&= |D_k(P)\backslash D_{i}(P)|+1 \\
&= n-1-|D_i(P)|+1 \;=\; n-|D_i(P)|.\qedhere
\end{align*}
\end{proof}

\begin{example}
Consider the poset $P$ given in Example~\ref{mtp_example}. The level- and down-sets of this poset are illustrated in the following diagram:\\[1em]
\centerline{\includegraphics{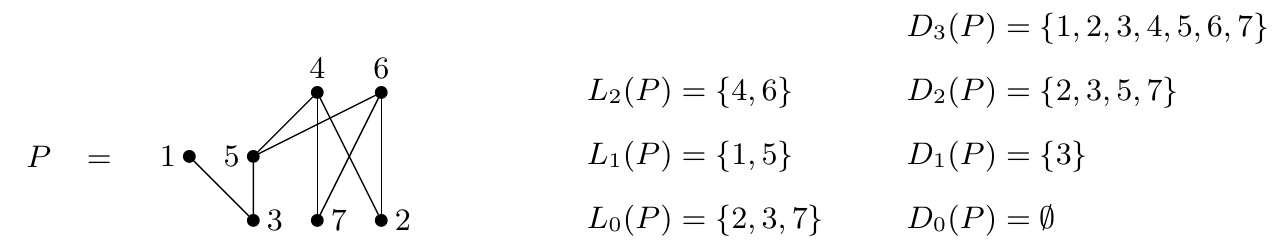}}
\ \\
From Theorem~\ref{recs_to_posets} we have that $u_x=|D_{j+1}(P)|$ for all $x \in L_j(P)$ and $j \in \{0,1,2\}$, hence
$u_2=u_3=u_7 = 8-1=7$, $u_1=u_5=8-4=4$, $u_3=7$ $\Rightarrow$ $(u_1,\ldots,u_7)= (4,7,7,1,4,1,7)$.

The dual $P^{\star}$ of $P$, together with its level- and down-sets:\\[1em]
\centerline{\includegraphics{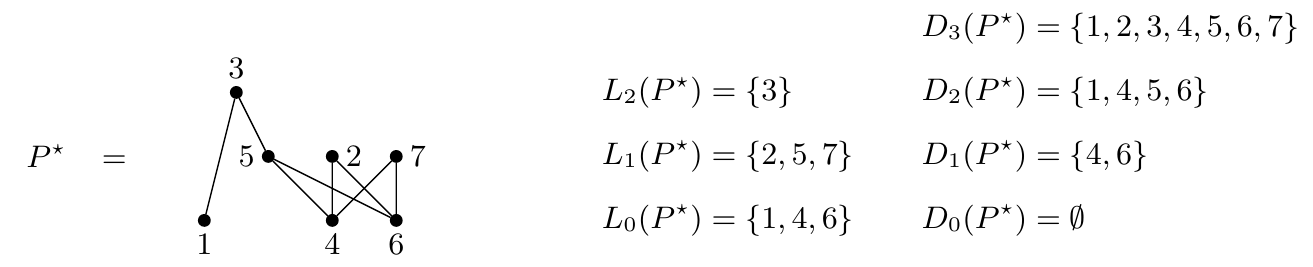}}
\ \\
From Theorem~\ref{recs_to_posets} we have that $u_x=|D_{j+1}(P^{\star})|$ for all $x \in L_j(P^{\star})$ and $j \in \{0,1,2\}$, hence
$u_{8+1} =u_{8+4}=u_{8+6}=2$, $u_{8+2}=u_{8+5}=u_{8+7}=4$ and $u_{8+3}=7$.
So $(u_9,\ldots,u_{15})=(2,4,7,2,4,2,4)$.
The configuration is $u=(4,7,7,1,4,1,7,0,2,4,7,2,4,2,4)$.
\end{example}

We end this section with the following conjecture concerning square non-zero recurrent configurations.
Is there a combinatorial explanation for the relationship to walks in the plane? (See \cite[Sequence A145600]{Sloane:2011}.)

\begin{conjecture} \label{NZConfConj}
Let $a_n=\{u \in \RecInc(D_{n,n})~:~ u_1,\ldots,u_{m+n-1}>0\}|$.
Then $a_n=\frac{1}{n-1} {2n-2\choose n} {2n\choose n-2}$, which is the number of walks from $(0,0)$ to $(0,1)$ 
that remain in the upper half-plane $(y\geq 0)$ using $2n-3$ unit steps $\{\nn,\sss,\ee,\ww\}$  
\end{conjecture}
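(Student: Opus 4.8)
The plan is to move the statement, through the correspondence of Theorem~\ref{main:thm}, to an enumeration of a subfamily of parallelogram polyominoes, and then to count that subfamily.

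\emph{Step 1: a polyomino reformulation.} First I would note that $f_{n,n}$ restricts to a bijection $\RecInc(D_{n,n})\to\Para_{n,n}$. By Theorem~\ref{main:thm}(a) it maps into $\Para_{n,n}$; writing $\inc_{n,n}(u)=u=(a_1,\dots,a_{n-1},b_1,\dots,b_n)$, from $\PP=f_{n,n}(u)$ one recovers $a_i$ as (the height of the $i$th east step of $\upperp(\PP)$)$-1$ and $b_j$ as (the abscissa of the $j$th north step of $\lowerp(\PP)$)$-1$, which gives injectivity and, read in reverse, surjectivity. Under this bijection the condition $u_1,\dots,u_{2n-1}>0$ says that every east step of $\upperp(\PP)$ lies at height $\ge2$ and every north step of $\lowerp(\PP)$ lies at abscissa $\ge2$; since these quantities are weakly increasing along their paths and every parallelogram polyomino already begins with $\nn$ on its upper path and $\ee$ on its lower path, the condition is equivalent to $\upperp(\PP)$ beginning $\nn\nn$ and $\lowerp(\PP)$ beginning $\ee\ee$, and, by convexity of $\PP$ in both axis directions, to $\PP$ containing the two cells $[0,1]\times[1,2]$ and $[1,2]\times[0,1]$, hence the whole square $[0,2]\times[0,2]$. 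Therefore
\[
a_n=\bigl|\{\PP\in\Para_{n,n}\;:\;[0,2]\times[0,2]\subseteq\PP\}\bigr|.
\]

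\emph{Step 2: counting these polyominoes.} I would try two routes. Analytically: encode a parallelogram polyomino column by column, recording for each pair of adjacent columns the interval of vertical overlap, to obtain a transfer-type recursion (in the style of Temperley and Bousquet-M\'elou) for the generating function of $\Para_{m,n}$ refined by width, height, and the length of the first column; the requirement ``$\upperp(\PP)$ starts $\nn\nn$ and $\lowerp(\PP)$ starts $\ee\ee$'' means the first column reaches height $\ge2$ and the first row reaches abscissa $\ge2$, a local modification of the initial and terminal conditions of the recursion. Solving the resulting kernel equation, extracting the diagonal coefficient $[x^ny^n]$, and applying Lagrange inversion should give the claimed value of $a_n$. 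Bijectively --- which would also answer the question posed just above --- I would look for an explicit bijection with the length-$(2n-3)$ walks from $(0,0)$ to $(0,1)$ confined to $\{y\ge0\}$ with steps $\{\nn,\sss,\ee,\ww\}$. A natural candidate is to read $\upperp(\PP)$ and $\lowerp(\PP)$ in lockstep and emit, at each tick, a vertical step recording how the current overlap of the two paths changes (yielding a Motzkin-type $y$-excursion whose level steps are the ticks on which the two paths agree in direction), tagging each level tick with an $\ee$ or a $\ww$; one then checks that discarding the forced $\nn\nn$ prefix of $\upperp(\PP)$, the forced $\ee\ee$ prefix of $\lowerp(\PP)$, and the forced final steps leaves a walk of length exactly $2n-3$ ending at $(0,1)$ and never leaving the half-plane.

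\emph{Where the difficulty lies.} Step~1 is routine given the apparatus already set up; the content is entirely in Step~2. Analytically, one must actually write down and solve the functional equation and recognise the Lagrange-inversion output as the target formula; bijectively, one must pin the correspondence down exactly, and the ``fingerprint'' of the answer --- half-plane confinement, length precisely $2n-3$, endpoint precisely $(0,1)$ --- is what makes a clean bijection delicate, which is presumably why the statement is offered only as a conjecture.
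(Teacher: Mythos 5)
This statement is one of the paper's open problems: it is stated as Conjecture~\ref{NZConfConj} and the authors supply no proof (indeed they explicitly ask for a combinatorial explanation of the walk interpretation), so there is no argument of theirs to compare yours against. Your Step~1 is sound and is the reduction one would want: $f_{n,n}$ does restrict to a bijection $\RecInc(D_{n,n})\to\Para_{n,n}$ (Theorem~\ref{main:thm}(a) plus the fact that the increasing sequences $a_i,b_j$ are recoverable from the column tops and row ends of the polyomino, with cardinalities matching by Corollary~\ref{naracor}), and under it the positivity condition becomes exactly ``$\upperp(\PP)$ starts $\nn\nn$ and $\lowerp(\PP)$ starts $\ee\ee$'', i.e.\ $[0,2]\times[0,2]\subseteq\PP$. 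But Step~2, where you yourself locate all of the content, is never executed: no functional equation is written down or solved, no coefficient is extracted, and the proposed bijection to half-plane walks is only a ``natural candidate'' whose length, endpoint and confinement properties are asserted rather than checked. As it stands the proposal is a (reasonable) research plan, not a proof, and it leaves the conjecture exactly as open as the paper does.

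One concrete warning before you invest in Step~2: verify small cases first, because the two descriptions in the statement do not agree as printed. For $n=3$ a direct enumeration gives $8$ increasing recurrent configurations on $D_{3,3}$ with all heights positive (equivalently, $8$ of the $20$ polyominoes in $\Para_{3,3}$ contain the square $[0,2]\times[0,2]$), and there are likewise $8$ walks of length $2n-3=3$ from $(0,0)$ to $(0,1)$ in the half-plane $y\ge 0$ (two orderings of $\{\nn,\nn,\sss\}$ and six of $\{\nn,\ee,\ww\}$); yet the displayed closed form evaluates to $\tfrac12\binom{4}{3}\binom{6}{1}=12$. So the walk interpretation appears to be the correct target and the binomial expression as printed is not; any proof attempt must first settle which quantity it is actually proving equal to $a_n$.
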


\section{$q,t$-Narayana polynomials and their symmetry}
\label{SEC:5}
In this section we will introduce a polynomial that we call the $q,t$-Narayana polynomial. 
The polynomial is the generating function for the bistatistic $(\area,\bounceWeight)$ on the set of parallelogram polyominoes.
In terms of recurrent configurations of the sandpile model, the area statistic is proportional to the sum of the heights of the piles, 
and the $\bounceWeight$ statistic is related to the canonical toppling process.
When viewed in the parallelogram polyomino, the bounce path is almost identical to the bounce path that Haglund defined for Dyck paths~\citep{Haglund:2003}.
Our polynomial is a natural extension of the area and bounce path statistics to the class of parallelogram polyominoes. 
More will be discussed about this in the subsections that follow.

For any polyomino $\PP \in \Para_{m,n}$, define its \emph{bounce weight} to be 
$$\bounceWeight(\PP) = \sum_{i=1}^{k} \lceil i/2 \rceil c_i,$$ where $\bounce(\PP)=(c_1,\ldots c_k)$.

This weight may also be described by summing weights on each step of the bounce path.
The initial step has a weight of $1$, and the weight of subsequent steps is incremented by $1$ after each turn on the upper boundary path of the polyomino.
Consider the following two polyominoes:

\begin{center}
\begin{tabular}{c} 
\includegraphics{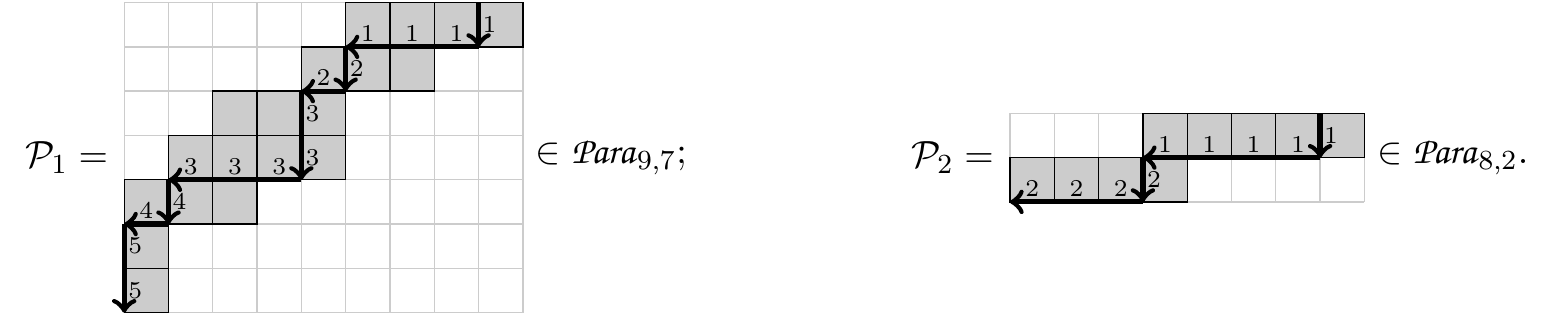}
\end{tabular}
\end{center}

The bounce weights are 
\begin{align*}
\bounceWeight(\PP_1) &= (1)+(1+1+1)+(2)+(2)+(3+3)+(3+3+3)+(4)+(4)+(5+5) = 41,\\
\noalign{and}
\bounceWeight(\PP_2) &= (1)+(1+1+1+1)+(2)+(2+2+2)=13. 
\end{align*}

The distribution of the bistatistic $(\area,\bounceWeight)$ on polyominoes in $\Para_{m,n}$ is represented by the generating function
$$\sgdistribution_{m,n}(q,t) = \sum_{\PP \in \Para_{m,n}}q^{\area(\PP)}t^{\bounceWeight(\PP)}.$$
We call these polynomials $q,t$-Narayana polynomials because $\{F_{m,n}(1,1)\}_{m,n\geq 1}$ 
are the Narayana numbers that were mentioned in Corollary~\ref{naracor}.
The following conjecture has been verified for all pairs $(m,n)$ with $m\leq 11$ and $n \in \mathbb{N}$.

\begin{conjecture} \label{first:sym}
For all positive integers $m$ and $n$, the distribution of the bistatistic $(\area,\bounceWeight)$ on polyominoes in $\Para_{m,n}$ is symmetric:
$$ \sgdistribution_{m,n}(q,t) = \sgdistribution_{m,n}(t,q).$$
\end{conjecture}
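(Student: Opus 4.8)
The plan is to prove the symmetry $\sgdistribution_{m,n}(q,t) = \sgdistribution_{m,n}(t,q)$ by exhibiting an explicit involution $\Phi$ on $\Para_{m,n}$ that swaps the two statistics, i.e. with $\area(\Phi(\PP)) = \bounceWeight(\PP)$ and $\bounceWeight(\Phi(\PP)) = \area(\PP)$. This is the structure of the known proofs of symmetry for the $q,t$-Catalan numbers (via the zeta map / Haglund--Loehr) and for the Egge et al.\ bistatistic, so it is the natural target. First I would reinterpret a polyomino $\PP \in \Para_{m,n}$ through its bounce data: by Characterization~\ref{bpcharact} and the analysis in the proof of Theorem~\ref{main:thm}, $\PP$ is encoded by the two strictly increasing sequences $0=x_0<x_1<\cdots<x_k=m-1<x_{k+1}=m$ and $0=y_0\le y_1<\cdots<y_{k+1}=n$ together with the ``residual'' data recording, for each cell-row and cell-column, which bounce block it belongs to (conditions (iii),(iv) of that proof). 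The bounce weight $\bounceWeight(\PP)=\sum_i \lceil i/2\rceil c_i$ is a function of the block sizes alone, whereas $\area(\PP)$ depends on the residual data as well.

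The key steps, in order, would be: (1) Decompose $\area(\PP)$ as a sum of contributions indexed by the bounce blocks, writing $\area(\PP) = (m+n-1) + \sum (\text{non-negative slack terms})$, one slack term per row/column measured against the relevant bounce corner --- essentially re-deriving the $\larea,\uarea$ identities of Theorem~\ref{levelareaequiv} block by block. (2) Likewise express $\bounceWeight(\PP)$ as a sum indexed by the same blocks. (3) Define $\Phi$ by transposing the roles of these two sums: build a new polyomino whose bounce block sizes are read off from the area-slack profile of $\PP$ and whose area-slack profile is read off from the bounce-block sizes of $\PP$. Concretely one expects $\Phi$ to be a composition of the geometric transpose (reflecting the bounding box across the anti-diagonal, which swaps $m\leftrightarrow n$ --- note this is why we must also prove the $m\leftrightarrow n$ symmetry, or else restrict to a transpose-stable reformulation) with a zeta-type rearrangement that converts a Dyck-like profile into its bounce profile and back. (4) Verify $\Phi$ is a bijection $\Para_{m,n}\to\Para_{m,n}$ (well-definedness: the output sequences satisfy (i)--(iv)), that $\Phi$ is an involution, and that it swaps the statistics, by checking the two block-indexed sums match up term by term.

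The main obstacle is step (3): unlike the Dyck-path case, a parallelogram polyomino carries \emph{two} independent boundary paths, so the ``residual data'' is richer than a single sequence of reading-word labels, and it is not obvious that the area-slack profile of $\PP$ can always be realized as a legal bounce-block size sequence of some polyomino in the \emph{same} box, nor that the correspondence is symmetric under iteration. I expect one must first set up a normal form --- perhaps factoring $\PP$ along the bounce path into a sequence of ``rectangular strips'' glued at the bounce corners, analogous to Haglund's decomposition of a Dyck path at its bounce points --- and then show each strip's internal freedom (an independent choice of a sub-polyomino or of a lattice path in a sub-rectangle) contributes a $q,t$-symmetric factor by induction on $k$ or on $m+n$. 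The cases already handled in the paper (small $m$, and the $\THSqRec$/Dyck connection of the last subsection) would serve as the base cases and as a sanity check that the strip factors are the right ones. A secondary difficulty is bookkeeping the ceiling $\lceil i/2\rceil$: since south-blocks and west-blocks alternate and only west-turns (upper-path turns) increment the weight, the weight grid is ``doubled'', and matching it against the area grid will require pairing each south-block with the following west-block, which is exactly the pairing $(\obounce,\ebounce)$ introduced before the $\SetPara$ construction; getting this pairing to interact correctly with the transpose is the delicate point.
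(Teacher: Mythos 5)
The statement you are addressing is presented in the paper as a \emph{conjecture}, and the paper does not prove it in general: it verifies it computationally for $\min(m,n)\leq 11$, proves it for $\min(m,n)=2$ (and $3$) by regular-expression decompositions, and proves only the extremal slice recorded in Theorem~\ref{jeffrey}. Your proposal is a strategy rather than a proof, and its crucial step (3) --- building a new polyomino whose bounce-block sizes are read off from the area profile of $\PP$ and vice versa --- is precisely where the paper shows the natural construction breaks down. On the set $A_{m,n}$ of polyominoes of minimal bounce weight the map you describe is exactly the paper's bijection $\Upsilon$ of Theorem~\ref{jeffrey}: there the bounce path is forced to be $\sss^{n}\ww^{m-1}$, the polyomino is determined by its diagonal-length sequence $\diagonalLengths(\PP)$, and that sequence can legitimately be converted into the bounce profile of a ribbon polyomino. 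But the paper then exhibits a concrete obstruction to extending this: a polyomino whose two vertical anti-diagonals both have length $1$ while its horizontal anti-diagonals have lengths $2$ and $1$. If the diagonal-profile-to-bounce-profile correspondence held in general, the image would need a horizontal bounce step of weight $2$ unaccompanied by any vertical step of weight $2$, which is impossible because the weighting $\lceil i/2\rceil$ pairs each run of south steps with the following run of west steps. So the term-by-term matching you intend to verify in step (4) provably cannot hold for an arbitrary polyomino, and no normal form along the bounce path is supplied that circumvents this.

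You have correctly located the difficulty --- your own caveat about step (3) is the heart of the matter --- but locating it is not the same as resolving it. The inductive strip decomposition you sketch would require the generating function of each strip to be separately $q,t$-symmetric, and no argument is given that the residual data inside a strip, which involves both boundary paths and interacts with the global $\lceil i/2\rceil$ weighting through the strip's position in the bounce sequence, factors in this way. As written, the proposal reproduces the partial results the paper already contains (the extremal slice and the small-parameter cases as base cases) and leaves the general statement open, which is its honest status in this paper.
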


Further to this we posit another symmetry (which has been checked for all pairs $(m,n)$ with $\max(m,n)\leq 9$).
\begin{conjecture} \label{second:sym}
For all positive integers $m$ and $n$ we have $ \sgdistribution_{m,n}(q,t) = \sgdistribution_{n,m}(q,t).$
\end{conjecture}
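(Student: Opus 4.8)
The plan is to construct a bijection $\Phi_{m,n}\colon\Para_{m,n}\to\Para_{n,m}$ preserving the bistatistic $(\area,\bounceWeight)$. The first thing to try is the transposition $\PP\mapsto\PP^{T}$, the reflection in the line $y=x$: it is visibly a bijection onto $\Para_{n,m}$ and $\area(\PP^{T})=\area(\PP)$. The catch is that it does \emph{not} preserve $\bounceWeight$. Reflecting $\Bounce(\PP)$ yields a path inside $\PP^{T}$ that starts from the \emph{opposite} corner $(n,m-1)$ and alternates west-then-south rather than south-then-west, so it is a ``co-bounce'' path, not $\Bounce(\PP^{T})$. Using the turning data $0=x_{0}<x_{1}<\dots<x_{k}=m-1$ and $0=y_{0}\le y_{1}<\dots<y_{k+1}=n$ of $\Bounce(\PP)$ from the proof of Theorem~\ref{main:thm}, one obtains the closed form $\bounceWeight(\PP)=n+\sum_{j=1}^{k}(x_{j}+y_{j})$, the sum being over the points at which the bounce path leaves the lower path; transposition keeps $\sum_{j}(x_{j}+y_{j})$ fixed but swaps the bounce path for the co-bounce path and changes the additive constant, so a genuine correction is required and transposition alone cannot be the answer.

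Consequently, the core of a full proof would be an area-preserving bijection of $\Para_{n,m}$ to itself turning the co-bounce weight back into the true $\bounceWeight$; equivalently, I would try to prove the identity by a recursion. Peeling off the first bounce segment — the initial south run of length $c_{1}$ followed by the west run of length $c_{2}$ — splits $\PP$ into a top strip of height $c_{1}$ spanning the full width together with a residual parallelogram polyomino of strictly smaller height (and possibly smaller width), with $\area$ additive under the split and each bounce pair of the residual gaining weight $1$; this expresses $F_{m,n}(q,t)$ through $F_{m',n'}(q,t)$ with $m'+n'<m+n$, with base cases the single rectangles $F_{1,n}(q,t)=q^{n}t^{n}$ and $F_{m,1}(q,t)=q^{m}t^{m}$. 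The hard part is that this recursion is intrinsically height-peeling, so the symmetry $F_{m,n}=F_{n,m}$ is nowhere visible in it; to finish one must either guess a closed form manifestly symmetric in $m$ and $n$ and verify that it solves the recursion, or set up the dual width-peeling recursion and show that the two together force the symmetry. I expect this reconciliation step to be the real obstacle, since even the analogous $q\leftrightarrow t$ symmetry for Dyck paths — the $q,t$-Catalan case of Conjecture~\ref{first:sym} — is already deep.

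As partial progress of the kind one can realistically carry out, I would first settle the small cases: $\min(m,n)=1$ is trivial, and $\min(m,n)\in\{2,3\}$ by direct case analysis from the recursion. Next I would handle the extreme $q$-coefficients: the coefficient of $q^{m+n-1}$ in $F_{m,n}$ is $\sum_{\PP\in\Ribbon_{m,n}}t^{\bounceWeight(\PP)}$, where each ribbon is a monotone staircase lattice path and $\bounceWeight$ admits a clean closed form, so this coefficient should reduce to a $t$-analogue of $\binom{m+n-2}{m-1}=\binom{m+n-2}{n-1}$ that is visibly $m\leftrightarrow n$ symmetric; the coefficient of $q^{mn}$ (the full rectangle) is immediate, and a degree-reversal argument should then transfer the top and bottom $t$-coefficients. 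Finally, the sandpile picture of Section~\ref{SEC:3} offers an independent route: since $K_{m,n}=K_{n,m}$ and only the side of the sink differs, a proof could instead be sought as a bijection between $\Rec(D_{m,n})$ and $\Rec(D_{n,m})$ — equivalently between spanning trees of $K_{m,n}$ rooted on either side — carrying $\level$, hence $\area$, and $\bounceWeight\circ\canontop$ across.
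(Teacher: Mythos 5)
The statement you set out to prove is labelled a \emph{conjecture} in the paper, and the paper does not prove it: the authors establish only the specialisation $\sgdistribution_{m,n}(q,1)=\sgdistribution_{n,m}(q,1)$ (via reflection in the line $y=x$, which preserves $\area$ but not $\bounceWeight$) and the cases $\min(m,n)=2$ (by computing $\sgdistribution_{2,\star}(q,t;z)$ and $\sgdistribution_{\star,2}(q,t;z)$ from regular expressions and observing they coincide), together with machine verification for $\max(m,n)\leq 9$. So there is no complete argument to measure yours against, and your proposal --- which is candid that it is a programme rather than a proof --- likewise leaves the conjecture open. That is the one genuine gap, and it is the same gap the authors leave.

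Within that frame your partial observations are sound and track what the paper actually does. Your diagnosis of transposition is exactly the authors': it carries $\Para_{m,n}$ to $\Para_{n,m}$ preserving $\area$, but the reflected bounce path is a co-bounce path from the opposite corner rather than $\Bounce(\PP^{T})$, so one only gets the $t=1$ theorem. Your closed form $\bounceWeight(\PP)=n+\sum_{j=1}^{k}(x_j+y_j)$ in terms of the turning data of Theorem~\ref{main:thm} is correct (it follows by Abel summation from $\bounce(\PP)=(y_{k+1}-y_k,\,x_k-x_{k-1},\,y_k-y_{k-1},\ldots)$ together with $x_0=y_0=0$ and $y_{k+1}=n$; one checks it on the worked example with $\bounceWeight=41$) and is a useful reformulation not written down in the paper --- it makes the asymmetry concrete, since the analogous formula for $\PP^{T}\in\Para_{n,m}$ carries the constant $m$ and a different set of turning points. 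Your remaining routes --- the height-peeling recursion, the extreme $q$-coefficients via the ribbon bijection of Theorem~\ref{jeffrey}, and a sink-swapping bijection on $\Rec(D_{m,n})$ --- are all reasonable and consistent with the paper's partial results, but none is carried through, and you correctly identify that reconciling the row-peeling and column-peeling recursions (or exhibiting a manifestly $m\leftrightarrow n$ symmetric form) is the real obstacle. In short: no error, but no proof; what you establish independently is essentially the same collection of special cases the authors establish, plus one clean formula they do not state.
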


Symmetry along the main diagonal of parallelogram polyominoes provides a bijection from polyominoes of $\Para_{m,n}$ to $\Para_{n,m}$ that preserves the area statistic. This mapping gives the following special case of Conjecture~\ref{second:sym}:

\begin{theorem}
For all positive integers $m$ and $n$ we have $ \sgdistribution_{m,n}(q,1) = \sgdistribution_{n,m}(q,1).$
\end{theorem}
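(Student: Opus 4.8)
The plan is to exhibit the asserted bijection concretely as reflection of the plane across the main diagonal, and then to verify that this reflection restricts to an area-preserving bijection $\Para_{m,n}\to\Para_{n,m}$. Let $\rho\colon\R^2\to\R^2$ be the involution $(x,y)\mapsto(y,x)$. It sends the unit cell $[i,i+1]\times[j,j+1]$ to the unit cell $[j,j+1]\times[i,i+1]$, sends the rectangle $[0,m]\times[0,n]$ to $[0,n]\times[0,m]$, and fixes each antidiagonal line $\{x+y=c\}$ setwise. For a finite union of cells $\PP$, let $\rho(\PP)$ denote the union of the reflected cells.

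First I would check that $\rho$ carries $\Para_{m,n}$ into $\Para_{n,m}$. Let $\PP\in\Para_{m,n}$. Since $\rho$ preserves edge-adjacency of cells, $\rho(\PP)$ is again edge-connected; and the pair of lattice paths $(\upperp(\PP),\lowerp(\PP))$ from $(0,0)$ to $(m,n)$ with steps in $\{\nn,\ee\}$ is carried by $\rho$ to a pair of paths from $(0,0)$ to $(n,m)$ obtained by interchanging the step types $\nn\leftrightarrow\ee$ (with upper and lower paths swapped, as is already visible on the single cell of $\Para_{1,1}$). These two image paths still touch only at their endpoints, because the original pair did and $\rho$ is injective; in particular $\rho(\PP)$ has bounding box exactly $[0,n]\times[0,m]$. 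Finally, because $\rho$ maps each line of slope $-1$ bijectively onto itself, the intersection of $\rho(\PP)$ with such a line is the $\rho$-image of a connected segment, hence connected. Thus $\rho(\PP)\in\Para_{n,m}$.

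Applying the same argument with the roles of $m$ and $n$ exchanged shows that $\rho\colon\Para_{m,n}\to\Para_{n,m}$ is a bijection (it is its own inverse). Since $\rho$ is a bijection from unit cells to unit cells which maps the cells of $\PP$ precisely onto the cells of $\rho(\PP)$, we get $\area(\rho(\PP))=\area(\PP)$ for every $\PP$. Hence, using $\PP'=\rho(\PP)$ as change of summation variable,
$$\sgdistribution_{m,n}(q,1)=\sum_{\PP\in\Para_{m,n}}q^{\area(\PP)}=\sum_{\PP'\in\Para_{n,m}}q^{\area(\PP')}=\sgdistribution_{n,m}(q,1).$$

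There is no serious obstacle here; the only points needing care are (i) confirming that the defining "slope $-1$" condition is preserved, which holds because reflection across $y=x$ fixes the family of antidiagonals, and (ii) being precise that the map preserves $\area$ on the nose, which holds because $\rho$ is a genuine cell-to-cell bijection. I would also remark that $\rho$ does not in general preserve $\bounce$ (hence not $\bounceWeight$), which is exactly why this argument yields only the $t=1$ specialization and not the full Conjecture~\ref{second:sym}.
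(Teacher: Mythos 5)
Your argument is exactly the paper's: the paper proves this theorem by observing that reflection across the main diagonal gives an area-preserving bijection $\Para_{m,n}\to\Para_{n,m}$, which is precisely your map $\rho$. Your write-up just fills in the routine verifications that the paper leaves implicit, so it is correct and takes essentially the same approach.
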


In the next two subsections we will present some arguments that support Conjecture~\ref{first:sym}.
In the final subsection we will discuss the relationship of Conjecture~\ref{first:sym} to the 
sandpile model and a similarly symmetric bistatistic on Dyck paths that was introduced by Haglund. 

\subsection{Proofs of the conjectures for small values of $\min(m,n)$}
In this subsection we show that Conjectures~\ref{first:sym} and \ref{second:sym} are true for all pairs $(m,n)$ with $m=2$.
The same method can be used to show the conjectures are true for other similarly small values of one of the parameters by using regular expressions.
These computations will not be detailed in this paper but we list the first few polynomials in the appendix, from which symmetry is apparent. 
Conjecture~\ref{first:sym} has been checked up to $\min(m,n) = 11$ and $n \in \mathbb{N}$ using generating functions resulting from the classical transfer-matrix method.

We consider polyominoes in $\Para_{2,n}$ for any $n\geq 1$.
Define the generating function for the bistatistic of all these polyominoes, adding a variable $z$ to record the height $n$ of polyominoes:
$$\sgdistribution_{2,\star}(q,t;z) = \sum_{n\geq 1} \sgdistribution_{2,n}(q,t)z^n.$$

\begin{theorem}
$\sgdistribution_{2,\star}(q,t;z) = {t^2q^2z}/{(1-qtz)(1-q^2tz)(1-qt^2z)}$.
\end{theorem}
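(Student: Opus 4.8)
The plan is to enumerate parallelogram polyominoes in $\Para_{2,n}$ directly by describing their structure, then read off the generating function. A polyomino $\PP \in \Para_{2,n}$ lives in a $2 \times n$ bounding box, so its upper path $\upperp(\PP)$ and lower path $\lowerp(\PP)$ both go from $(0,0)$ to $(2,n)$ using steps $\{\nn,\ee\}$, touch only at the endpoints, and the region between them is the polyomino. Since $m=2$, each path has exactly two $\ee$ steps. I would encode such a polyomino by the heights at which the two columns of cells start and stop: concretely, a polyomino in $\Para_{2,n}$ is determined by integers recording, for each of the two columns $[0,1]\times[\cdot,\cdot]$ and $[1,2]\times[\cdot,\cdot]$, the bottom and top $y$-coordinates of the cells present. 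The parallelogram (connectedness along slope $-1$ lines) condition translates into a simple interleaving condition on these four coordinates, and the area is the total number of cells.

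The key step is to set up the right parametrization so that both $\area(\PP)$ and $\bounceWeight(\PP)$ become transparent. For $m=2$, the bounce path $\Bounce(\PP)$ starts at $(1,n)$, goes south to the lower path, turns west once to reach $x=0$ (hitting the upper path), then goes south to $(0,0)$; so $\bounce(\PP)=(c_1,c_2,c_3)$ has exactly three parts with $c_2 = 1$ always (the single west step from $x=1$ to $x=0$), $c_1$ the height of the right column measured from the top, and $c_3$ the height of the left column measured from the bottom, with $c_1+c_3 = n+1$ by Characterization~\ref{seqcharact}. Hence $\bounceWeight(\PP) = \lceil 1/2\rceil c_1 + \lceil 2/2 \rceil c_2 + \lceil 3/2 \rceil c_3 = c_1 + 1 + 2c_3$. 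I would then parametrize $\Para_{2,n}$ by three positive-integer parameters governing (i) the extent $c_1$ of the right column, (ii) the extent $c_3$ of the left column, and (iii) the overlap between the two columns, which together determine $n$ and $\area(\PP)$; counting cells gives $\area(\PP)$ as a linear function of these parameters, and $n$ as another. Summing $q^{\area}t^{\bounceWeight}z^n$ over all valid parameter values factors as a product of three geometric series, which should produce exactly $t^2q^2 z/\big((1-qtz)(1-q^2tz)(1-qt^2z)\big)$; the numerator $t^2q^2z$ reflects the minimal polyomino (the single ribbon in $\Para_{2,1}$, with $\area=2$, $\bounceWeight = 1+1 = 2$, $n=1$), and the three denominator factors $1-qtz$, $1-q^2tz$, $1-qt^2z$ correspond to independently lengthening the three segments.

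The main obstacle I expect is pinning down the parametrization so the three contributions are genuinely independent and the exponents of $q$, $t$, $z$ come out with the stated coefficients — in particular, verifying that lengthening the left column, lengthening the right column, and sliding the overlap each add $\area$ and $n$ and $\bounceWeight$ in exactly the pattern $(1,1,1)$, $(2,1,1)$, $(1,1,2)$ recorded by the three factors. Once the bijection between $\Para_{2,n}$ and triples of positive integers is established and checked against a small case such as $\Para_{2,2}$ (where $F_{2,2}(q,t) = q^2t^2(q+t)$ should emerge, symmetric as promised), the generating-function identity is a routine computation with geometric series. I would also note in passing that $F_{2,\star}(q,t;z)$ is manifestly symmetric in $q,t$ — the denominator is fixed under swapping $q\leftrightarrow t$ up to permuting its factors, and the numerator is symmetric — which re-proves Conjecture~\ref{first:sym} for $m=2$, and that setting $q=t$ or extracting coefficients recovers $F_{2,n}$ symmetric in $m,n$ against the known value of $F_{n,2}$.
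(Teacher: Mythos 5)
Your overall strategy --- decomposing a polyomino of $\Para_{2,n}$ into a mandatory seed of weight $q^2t^2z$ plus independently repeatable pieces of weights $qtz$, $q^2tz$ and $qt^2z$ --- is exactly the paper's proof, which reads the polyomino row by row from top to bottom as a word of the regular language $a^*b^*cd^*$ (with $a$ = right cell only, $b$ = both cells with a vertical bounce step between them, $c$ = the turning row, $d$ = left cell only) and weights the letters $qt$, $q^2t$, $q^2t^2$, $qt^2$. However, as written your proposal leaves its key step unresolved and contains some concrete errors. The unresolved step is the one you yourself flag: your three parameters (extent $R$ of the right column, extent $L$ of the left column, overlap $V$) are \emph{not} independent, since they satisfy $1\leq V\leq\min(R,L)$, so the sum over them does not factor into three geometric series until you change variables to $(R-V,\,V-1,\,L-V)$, i.e.\ to the multiplicities of the row types $a$, $b$, $d$. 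That change of variables is essentially the whole content of the proof, and the proposal stops just short of it.

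On the errors: writing the lower path of $\PP\in\Para_{2,n}$ as $\ee\,\nn^{j}\,\ee\,\nn^{n-j}$, the bounce path is $\sss^{\,n-j}\ww\,\sss^{\,j}$, so $\bounce(\PP)=(n-j,1,j)$ and $c_1+c_3=n$, not $n+1$; moreover $c_3=j$ is the height of the \emph{bottom of the right column} (equivalently, the number of rows containing only the left cell), not ``the height of the left column measured from the bottom'', and Characterization~\ref{seqcharact} asserts nothing of that kind. Your formula $\bounceWeight(\PP)=c_1+c_2+2c_3$ is nonetheless correct. Finally, the sanity check is off: the three polyominoes of $\Para_{2,2}$ have $(\area,\bounceWeight)$ equal to $(4,3)$, $(3,3)$ and $(3,4)$, so $F_{2,2}(q,t)=(qt)^3(1+q+t)$ as in the paper's appendix, not $q^2t^2(q+t)$; the latter does not even match the coefficient of $z^2$ in the generating function you are trying to prove. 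None of these slips destroys the plan, but the proof is not complete until the independent parametrization is actually exhibited.
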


\begin{proof}
Every polyomino $P \in \Para_{2,n}$ is uniquely encoded as a regular expression $ a^*b^*cd^*$ by reading the polyomino from top to bottom and assigning the different rows the following expressions:\\

\centerline{\includegraphics{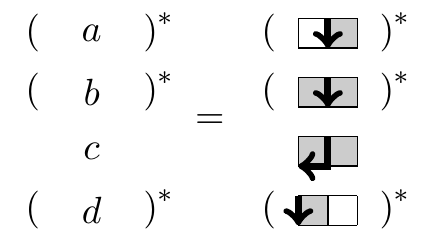}}

In order to count these expressions in terms of the parameters mentioned in the definition of $\sgdistribution$, we weigh the expressions as follows:
$w(a)=qt$ is the weight of a row with only the rightmost cell occupied, 
$w(b)=q^2t$ is the weight of a row with both cells occupied and a vertical step of the bounce path between them, 
$w(c)=q^2t^2$ is the weight of the row where the bounce path changes from vertical to horizontal, 
$w(d)=qt^2$ is the weight of a row with only the leftmost cell and the bounce path on a vertical trajectory towards the origin.
The straightforward translation of the regular expression into a rational generating series gives
$$ \sgdistribution_{2,\star}(q,t;z) = \frac{t^2q^2z}{(1-qtz)(1-q^2tz)(1-qt^2z)}.$$
The involution that exchanges the letters $b$ and $d$, and then exchanges the blocks $b^*$ and $d^*$, proves the symmetry of $\sgdistribution_{2,n}(q,t)$ with respect to $q$ and $t$.  
\end{proof}

\begin{theorem}
$\sgdistribution_{\star, 2}(q,t;z) = {t^2q^2z}/{(1-qtz)(1-q^2tz)(1-qt^2z)}$.
\end{theorem}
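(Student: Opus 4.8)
The plan is to count $\Para_{m,2}$ directly, reading a polyomino column by column from left to right, exactly in the spirit of the preceding proof for $\sgdistribution_{2,\star}$ but with rows and columns interchanged. I do not see a way to deduce this from the previous theorem, since transposition is not known to preserve $\bounceWeight$.

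\emph{Step 1: the column encoding.} Each of the $m$ columns of $\PP\in\Para_{m,2}$ is of one of three types: $\mathtt{B}$ (only the lower cell present), $\mathtt{T}$ (only the upper cell present), or $\mathtt{F}$ (both cells present). Writing the lower path of $\PP$ as $\ee^{a_0}\nn\ee^{a_1}\nn$ and the upper path as $\nn\ee^{b_1}\nn\ee^{b_2}$ (with $a_0+a_1=b_1+b_2=m$), the requirement that the two paths touch only at their endpoints is precisely $0\le b_1<a_0\le m$, and then column $i$ is $\mathtt{B}$ for $i\le b_1$, $\mathtt{F}$ for $b_1<i\le a_0$, and $\mathtt{T}$ for $i>a_0$. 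Hence $\PP\mapsto(\text{column word})$ is a bijection from $\Para_{m,2}$ onto the regular language $\mathtt{B}^{\ast}\mathtt{F}^{+}\mathtt{T}^{\ast}$ of length-$m$ words (at least one $\mathtt{F}$). Since $\mathtt{B}$- and $\mathtt{T}$-columns carry one cell and $\mathtt{F}$-columns two, $\area(\PP)=m+\#\mathtt{F}$.

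\emph{Step 2: the bounce path.} Because the height is $2$, $\Bounce(\PP)$ has at most two maximal runs of $\sss$-steps, and there are two cases. If the word contains a $\mathtt{T}$ (equivalently $a_0\le m-1$), the bouncing path meets the lower path immediately at $(m-1,1)$, travels west to $(b_1,1)$ on the upper path, drops to $(b_1,0)$ on the lower path, and runs west to the origin, so $\Bounce(\PP)=\sss\,\ww^{\,m-1-b_1}\,\sss\,\ww^{\,b_1}$, i.e. $\bounce(\PP)=(1,m-1-b_1,1,b_1)$, whence $\bounceWeight(\PP)=m+2+\#\mathtt{B}$. If the word contains no $\mathtt{T}$ (i.e. $a_0=m$), then $(m-1,1)$ is not on the lower path, so $\Bounce(\PP)=\sss\sss\,\ww^{\,m-1}$, i.e. $\bounce(\PP)=(2,m-1)$ and $\bounceWeight(\PP)=m+1$, independent of $\#\mathtt{B}$. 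Verifying these two descriptions from the definition of $\Bounce$ --- in particular pinning down which lattice points of the two boundary paths the bouncing path actually stops at --- is the only genuinely delicate step; everything else is mechanical, and the small cases $m=1,2$ are consistent once trailing zeros in $\bounce(\PP)$ are dropped.

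\emph{Step 3: assembling the generating function.} Split the sum $\sum_{\PP\in\Para_{m,2}}q^{\area(\PP)}t^{\bounceWeight(\PP)}z^{m}$ according to these two cases and sum geometric series. In the ``$\mathtt{T}$-present'' case a $\mathtt{B}$ contributes $qt^{2}z$, an $\mathtt{F}$ contributes $q^{2}tz$, a $\mathtt{T}$ contributes $qtz$, with prefactor $t^{2}$, giving $q^{3}t^{4}z^{2}/\bigl((1-qt^{2}z)(1-q^{2}tz)(1-qtz)\bigr)$. In the ``$\mathtt{T}$-absent'' case a $\mathtt{B}$ contributes $qtz$, an $\mathtt{F}$ contributes $q^{2}tz$, with prefactor $t$, giving $q^{2}t^{2}z/\bigl((1-qtz)(1-q^{2}tz)\bigr)$. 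Adding the two and using $q^{2}t^{2}z(1-qt^{2}z)+q^{3}t^{4}z^{2}=q^{2}t^{2}z$ collapses the sum to $t^{2}q^{2}z/\bigl((1-qtz)(1-q^{2}tz)(1-qt^{2}z)\bigr)$, which is the claimed formula. Since this is identical to the expression for $\sgdistribution_{2,\star}$, both the symmetry $\sgdistribution_{m,2}(q,t)=\sgdistribution_{m,2}(t,q)$ and the case $\min(m,n)=2$ of Conjecture~\ref{second:sym} follow immediately.
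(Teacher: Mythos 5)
Your proposal is correct and follows essentially the same route as the paper: a column-by-column decomposition of $\Para_{m,2}$ into the two regular languages $\mathtt{B}^*\mathtt{F}^+\mathtt{T}^+$ and $\mathtt{B}^*\mathtt{F}^+$ according to the height of the first turn of the bounce path, with exactly the letter weights the paper assigns (your two cases are the paper's $a^*bc^*d^*e$ and $f^*g^*h$). You have merely supplied the verification of the bounce-path shapes that the paper leaves to the reader; the only substantive difference is that the paper additionally exhibits a weight-preserving involution on the words to explain the $q\leftrightarrow t$ symmetry combinatorially, whereas you read it off from the final rational expression.
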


\begin{proof}
The case of polyominoes of $\Para_{m,2}$ is similar to $\Para_{2,n}$, although slightly more complicated since it requires a discussion on the height of the first turn in the bounce path.
This discussion leads to the two cases of the column-by-column decomposition of those polyominoes (details left to the reader):\\
\centerline{\includegraphics{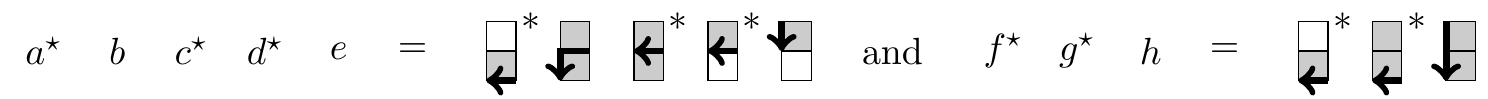}}

The weights of the expressions as follows:
$w(a)=qt^2$, 
$w(b)=q^2 t^3$,
$w(c)=w(g) =q^2 t$, 
$w(d)=w(e)=w(f) = qt$, 
and $w(h)=q^2 t^2$.
The translation of the two regular expressions above into rational series gives
\begin{align*}
\sgdistribution_{\star,2}(q,t;z) 
&= \frac{1}{1-qt^2z}.q^2t^3z.\frac{1}{1-q^2tz}.\frac{1}{1-qtz}.qtz+\frac{1}{1-qtz}.\frac{1}{1-q^2tz}.q^2t^2z \\
&= \dfrac{t^2q^2z}{(1-qtz)(1-q^2tz)(1-qt^2z)}.
\end{align*}
We may also describe explicitly an involution exchanging the two parameters on the words of this regular expression.
The equality of weights 
$$ w(be) = w(b)w(e) = q^2t^3.qt = q^3t^4 = qt^2.q^2t^2 = w(a)w(h) = w(ah)$$ 
shows that 
$$w(a^*bc^*d^*e) = w(a^+c^*d^*h),$$
with a bijection from $a^*bc^*d^*e$ to $a^+c^*d^*h$ that consists of replacing the letter $b$ by $a$ and the letter $e$ by $h$.
In addition, $w(g)=q^2t=w(c)$ and $w(f)=qt=w(d)$ implies that 
$$w(f^*g^*h) = w(c^*d^*h),$$
with a bijection from $f^*g^*h$ to $c^*d^*h$ that consists of replacing the letters $f$ by $d$ and the letters $g$ by $c$, followed by exchanging the order of the blocks of letters $d^*$ and $c^*$.
Subsequently, one has
$$ a^+c^*d^*h + c^*d^*h = a^*c^*d^*h$$
and the resulting language is similar to the case of $\sgdistribution_{2,\star}$, except that now it is the exchange of $c$ and $a$ that defines the involution swapping the two statistics.
\end{proof}

\begin{corollary}
Conjectures~\ref{first:sym} and \ref{second:sym} are true for all pairs $(m,n)$ with $\min(m,n)=2$. 
\end{corollary}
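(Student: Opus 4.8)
The plan is to deduce the corollary directly from the two preceding theorems together with the $m=2$ case of Conjecture~\ref{second:sym}, which is not yet established but follows by the same regular-expression analysis. First I would observe that the two theorems in this subsection already give
$$\sgdistribution_{2,\star}(q,t;z) = \sgdistribution_{\star,2}(q,t;z) = \frac{t^2q^2z}{(1-qtz)(1-q^2tz)(1-qt^2z)},$$
and since these are the generating functions $\sum_{n\geq 1}\sgdistribution_{2,n}(q,t)z^n$ and $\sum_{m\geq 1}\sgdistribution_{m,2}(q,t)z^m$ respectively, extracting the coefficient of $z^n$ from both sides yields $\sgdistribution_{2,n}(q,t)=\sgdistribution_{n,2}(q,t)$ for every $n\geq 1$. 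This is precisely Conjecture~\ref{second:sym} in the case $\min(m,n)=2$.

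Next I would address Conjecture~\ref{first:sym} for $\min(m,n)=2$. The case $m=2$, $n$ arbitrary, is contained in the first theorem: the rational expression for $\sgdistribution_{2,\star}(q,t;z)$ is manifestly symmetric in $q$ and $t$ (the three factors in the denominator, $(1-qtz)$, $(1-q^2tz)$, $(1-qt^2z)$, are permuted among themselves by swapping $q\leftrightarrow t$, and the numerator $t^2q^2z$ is symmetric), so $\sgdistribution_{2,n}(q,t)=\sgdistribution_{2,n}(t,q)$ for all $n$; alternatively one invokes the explicit $b^*\leftrightarrow d^*$ involution given there. For the case $n=2$, $m$ arbitrary, I would either repeat this observation using the identical closed form for $\sgdistribution_{\star,2}(q,t;z)$, or simply combine the already-proved $\sgdistribution_{m,2}(q,t)=\sgdistribution_{2,m}(q,t)$ with the symmetry $\sgdistribution_{2,m}(q,t)=\sgdistribution_{2,m}(t,q)$ to conclude $\sgdistribution_{m,2}(q,t)=\sgdistribution_{m,2}(t,q)$. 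Writing $\min(m,n)=2$ means one of $m,n$ equals $2$, so these two cases exhaust the statement.

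Assembling: for any pair $(m,n)$ with $\min(m,n)=2$, write it as $(2,n)$ or $(m,2)$; in the first case both conjectured identities follow from the closed form for $\sgdistribution_{2,\star}$, and in the second case $\sgdistribution_{m,2}=\sgdistribution_{2,m}$ comes from comparing the two closed forms coefficientwise while the $q,t$-symmetry then transfers from the $(2,m)$ case. No separate obstacle remains beyond the coefficient extraction, which is routine; the only point requiring care is making sure the two generating functions $\sgdistribution_{2,\star}$ and $\sgdistribution_{\star,2}$ are indexed by the same variable $z$ tracking the relevant dimension, which they are by construction, so that equality of the series is equivalent to equality of all the polynomials $\sgdistribution_{2,n}$ and $\sgdistribution_{n,2}$.
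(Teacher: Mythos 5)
Your argument is correct and is exactly the derivation the paper intends: the corollary is stated without a separate proof precisely because it follows by extracting coefficients of $z^n$ from the two identical closed forms for $\sgdistribution_{2,\star}$ and $\sgdistribution_{\star,2}$ (giving Conjecture~\ref{second:sym}) and observing that this closed form is invariant under $q\leftrightarrow t$ (giving Conjecture~\ref{first:sym}), with the explicit involutions in the two theorem proofs serving as the bijective witnesses. No gap.
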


A similar analysis shows that 
$$ \sgdistribution_{3,\star}(q,t;z) = q^3t^3z\left(1+\dfrac{q^3tz}{1-q^3tz}+\dfrac{qt^3z}{1-qt^3z}\right).$$
See Appendix~\ref{more:polys} for higher values.

\newcommand{\atb}{\Upsilon}
\subsection{Proof of Conjecture~\ref{first:sym} for minimal values of one of the two statistics}
\begin{theorem} \label{jeffrey}
In $\Para_{m,n}$, for any $i$, there are as many polyominoes whose bistatistic 
$(\area,\bounceWeight)$ is $(m+n-1,i)$ as there are polyominoes whose bistatistic $(\area,\bounceWeight)$ is $(i,m+n-1)$.  
\end{theorem}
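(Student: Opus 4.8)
My plan is to describe both families explicitly and to show that each one is enumerated, according to the relevant statistic, by the same polynomial. First note that since $m+n-1$ is the minimum possible area, $\{\PP\in\Para_{m,n}:\area(\PP)=m+n-1\}=\Ribbon_{m,n}$; and since $\bounce(\PP)=(c_1,\dots,c_k)$ always satisfies $\sum_i c_i=m+n-1$ while each weight $\lceil i/2\rceil\ge 1$, we have $\bounceWeight(\PP)\ge m+n-1$, with equality precisely when $k\le 2$, i.e.\ when $\Bounce(\PP)$ runs straight south from $(m-1,n)$ to $(m-1,0)$ and then straight west to $(0,0)$; this is equivalent to $\lowerp(\PP)=\ee^{m}\nn^{n}$ (and then $\bounce(\PP)=(n,m-1)$). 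So the assertion is that for every $i$,
\[
\#\{\PP\in\Ribbon_{m,n}:\bounceWeight(\PP)=i\}=\#\{\PP\in\Para_{m,n}:\lowerp(\PP)=\ee^{m}\nn^{n},\ \area(\PP)=i\},
\]
and I would prove this by showing that both sides equal the coefficient of $t^{i}$ in $t^{\,m+n-1}\binom{m+n-2}{m-1}_t$, where $\binom{m+n-2}{m-1}_t$ is the Gaussian binomial coefficient.

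The right-hand family is the easy one. If $\lowerp(\PP)=\ee^{m}\nn^{n}$ then $\PP$ is exactly the set of cells of the $m\times n$ box lying below $\upperp(\PP)$, so $\area(\PP)$ counts those cells; and $\upperp(\PP)$ may be any monotone path from $(0,0)$ to $(m,n)$ starting with $\nn$ and ending with $\ee$, i.e.\ $\upperp(\PP)=\nn\,v\,\ee$ with $v$ a monotone path from $(0,1)$ to $(m-1,n)$. Stripping the forced first and last steps — which together account for a constant $m+n-1$ cells below — leaves a monotone path inside an $(m-1)\times(n-1)$ box, and the number of cells below such a path is the standard Mahonian ``area'' statistic on lattice paths, with generating function $\binom{m+n-2}{m-1}_t$. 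Hence $\sum_{\lowerp(\PP)=\ee^{m}\nn^{n}}t^{\area(\PP)}=t^{\,m+n-1}\binom{m+n-2}{m-1}_t$.

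For the left-hand family I would use the bijection $\PP\mapsto w$ sending a ribbon to the word $w\in\{\nn,\ee\}^{\,m+n-2}$ (with $n-1$ letters $\nn$ and $m-1$ letters $\ee$) that records the lattice path through the centres of its cells. The crux is the following lemma: \emph{for $\PP\in\Ribbon_{m,n}$ with cell word $w$, reading $\Bounce(\PP)$ backwards — from $(0,0)$ to $(m-1,n)$ — produces $w$ followed by a single extra $\nn$-step; equivalently $\bounceWeight(\PP)=(m+n-1)+\mathrm{maj}(w)$, where $\mathrm{maj}(w)=\sum\{p:\ w_p=\nn,\ w_{p+1}=\ee\}$ is the major index of $w$ (its descents being the $\nn\ee$-factors).} I would prove this by induction on $|w|$, peeling from $w$ its last maximal run of $\ee$'s together with the block of $\nn$'s following it: write $w=u\,\ee^{\,b}\,\nn^{\,a}$ with $b\ge 1$ maximal. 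If $u$ is empty then $w=\ee^{\,m-1}\nn^{\,n-1}$, $\mathrm{maj}(w)=0$, and $\bounce(\PP)=(n,m-1)$ — the base case. Otherwise $u$ ends in $\nn$ and $\PP$ is the smaller ribbon $\PP_u\in\Ribbon_{m-b,\,n-a}$ (whose cell word is $u$) together with a further row of cells along the top and a further column along the right. Tracing $\Bounce(\PP)$: it runs south from $(m-1,n)$ to the point $(m-1,\,n-1-a)$ of $\lowerp(\PP)$, then west to the point $(m-1-b,\,n-1-a)$ of $\upperp(\PP)$ — the concave corner of the ribbon sitting at the last $\nn\ee$-turn of $w$ — and from there it retraces $\Bounce(\PP_u)$ step for step, except that its initial southern run is one step shorter, since it begins one unit lower than the bounce path of $\PP_u$ does. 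Because $\PP$ and $\PP_u$ agree on $[0,m-1-b]\times[0,n-1-a]$, the adjoined row and column are irrelevant to this tail and its turns on the upper path are exactly those of $\Bounce(\PP_u)$; hence $\bounceWeight(\PP)=(a+1)+b+(\bounceWeight(\PP_u)-1)+(N-1)$, where $N=(m-b)+(n-a)-1$ is the number of steps of $\Bounce(\PP_u)$. Combining this with $\bounceWeight(\PP_u)=N+\mathrm{maj}(u)$ (induction), $\mathrm{maj}(w)=\mathrm{maj}(u)+|u|$ and $|u|=m+n-2-a-b$ gives $\bounceWeight(\PP)=(m+n-1)+\mathrm{maj}(w)$, as desired. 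I expect the geometric bookkeeping in this step — verifying that the tail of $\Bounce(\PP)$ really is the shortened bounce path of $\PP_u$, and that the extra row and column play no role — to be the main obstacle.

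Granting the lemma the proof closes quickly: by MacMahon's theorem $\mathrm{maj}$ is equidistributed with the ``area'' statistic over words with $n-1$ letters $\nn$ and $m-1$ letters $\ee$, so $\sum_{\PP\in\Ribbon_{m,n}}t^{\bounceWeight(\PP)}=t^{\,m+n-1}\sum_w t^{\mathrm{maj}(w)}=t^{\,m+n-1}\binom{m+n-2}{m-1}_t$, which matches the right-hand family. Comparing coefficients of $t^{i}$ gives the theorem, and in particular confirms Conjecture~\ref{first:sym} along the lines $\area=m+n-1$ and $\bounceWeight=m+n-1$.
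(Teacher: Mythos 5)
Your proof is correct, but it takes a genuinely different route from the paper's. Both arguments begin the same way, identifying the two families as $\Ribbon_{m,n}$ (minimal area) and the polyominoes with $\lowerp(\PP)=\ee^{m}\nn^{n}$ (minimal bounce weight, equivalently $\bounce(\PP)=(n,m-1)$). From there the paper constructs an explicit bijection $\atb$ between the two families that \emph{swaps} the bistatistic: a polyomino with minimal bounce weight is encoded by its unimodal sequence of diagonal lengths $\diagonalLengths(\PP)$, and $\atb(\PP)$ is the ribbon whose bounce sequence records the multiplicities of those lengths, so that $\bounceWeight(\atb(\PP))=\area(\PP)$ and $\area(\atb(\PP))=\bounceWeight(\PP)$. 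You instead compute both generating functions and show each equals $t^{\,m+n-1}\binom{m+n-2}{m-1}_t$: the area side is the standard cell-count under a monotone path, and the bounce side rests on your lemma $\bounceWeight(\PP)=(m+n-1)+\mathrm{maj}(w)$ for a ribbon with cell word $w$, combined with MacMahon's equidistribution of $\mathrm{maj}$ with the area statistic. I checked your inductive bookkeeping for the lemma (peeling $w=u\,\ee^{b}\,\nn^{a}$, getting $\bounce(\PP)=(a+1,\,b,\,c'_1-1,\,c'_2,\dots)$ from $\bounce(\PP_u)=(c'_1,c'_2,\dots)$) and it does go through; the only loose end is the degenerate case $m=1$, where $w$ contains no $\ee$ and your decomposition is vacuous, but the theorem is trivial there. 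The trade-off: the paper's route produces an explicit statistic-swapping bijection between the two families (structurally stronger, and what one would want toward Conjecture~\ref{first:sym}), while yours produces the closed product form $t^{\,m+n-1}\binom{m+n-2}{m-1}_t$ for the common distribution — a clean formula the paper never states — and the $\mathrm{maj}$ interpretation of $\bounceWeight$ on ribbons, at the cost of outsourcing the bijective content to MacMahon's theorem.
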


In order to prove this theorem, let us assign names to the two sets of polyominoes with which we are dealing. The
proof will then consist of a bijection $\atb$ that switches the $\area$ and $\bounceWeight$ statistics.
Let 
\begin{align*}
A_{m,n} &= \{\PP \in \Para_{m,n} ~:~ \bounceWeight(\PP)=m+n-1\},\\
B_{m,n} &= \{\PP \in \Para_{m,n} ~:~ \area(\PP)=m+n-1\}.
\end{align*}

\begin{proof}
Let $$\Delta_k = \{ [i-1,i] \times [j-1,j]~:~ i+j-1=k \mbox{ and } i,j \in \mathbb{N}_0\}.$$
For any $\PP \in \Para_{m,n}$ and all $k \in \{1,\ldots,m+n-1\}$, let
$d_k=d_k(\PP)=|\PP \cap \Delta_k|$, the number of cells of $\PP$ on its $k$th anti-diagonal.
Define the vector of all these values $\diagonalLengths(\PP)=(d_1,\ldots d_{m+n-1})$

Suppose that $\PP \in A_{m,n}$.
We now define an operation $\atb$ on $A_{m,n}$.
Polyominoes in $A_{m,n}$ all have the same bounce path that is a sequence of $n$ south steps, followed by a sequence of $m-1$ west steps.
These polyominoes are exactly those which are defined by their upper paths, since their bottom paths are necessarily $\ee^m\nn^n$.

The sequence of diagonal lengths of polyominoes in $A_{m,n}$ describes this upper path: 
for $i \in \{2,\ldots, m\}$, $d_{i-1}<d_i$ (resp. $d_{i-1}=d_i$) represents a north (resp. east) step, 
whereas for $i \in \{m+1,\ldots,  m+n\}$, $d_{i-1}=d_i$ (resp. $d_{i-1}>d_i$) represents a north (resp. east) step.

The position $m$ is special in the diagonal length sequence since it corresponds to the turn of the bottom path: 
indeed $d_1,\ldots d_m$ is weakly increasing while $d_m,\ldots d_{m+n-1}$ is weakly decreasing.
Let $d'=d_m$ and for all $i \in \{0,\ldots,d'\}$ define
\begin{align*}
x_i &= 1+\#\{d_1,\ldots,d_{m-1}>d'-i\}, \\
y_i &= \#\{d_m,\ldots,d_{m+n-1}>d'-i\}.
\end{align*}
Let 
$$\atb(\PP) = \bigcup_{i=1}^{d'} [x_{i-1}-1,x_i -1] \times [y_{i-1},y_{i-1}+1] ~ \cup ~ [x_i -1,x_i] \times [y_{i-1},y_i].$$
By drawing the general diagram of $\atb(\PP)$, one immediately sees that the union above is a union of backwards L shapes that are piled on top of each other, and meet at the endpoints. This means that $\atb(\PP)$ is a ribbon polyomino in $\Ribbon_{m,n}$.
The bounce path is easily seen from the segments it is made up from, 
and we have $$\bounce(\atb(\PP)) = (y_{d'+1-i}-y_{d'-i},x_{d'+1-i}-x_{d'-i},\ldots,y_{1}-y_0,x_1-x_0).$$
The bounce weight is 
\begin{align*}
\bounceWeight(\atb(\PP)) 
&= \sum_{i=1}^{d'} i(y_{d'+1-i}-y_{d'-i} + x_{d'+1-i}-x_{d'-i}) \\
&= \sum_{i=1}^{d'} i(\#\{d_{m},\ldots,d_{m+n-1}=i\} + \#\{d_1,\ldots,d_{m-1}=i\}) \\
&= d_1+\cdots+d_{m+n-1} = \area(\PP).
\end{align*}
So we have that $\atb(\PP) \in B_{m,n}$.
Furthermore, the statistics get swapped:
$$(\area(\atb(\PP)), \bounceWeight(\atb(\PP))) \;=\; (\bounceWeight(\PP),\area(\PP)).$$

The inverse of $\atb$ is straightforward to give:
Suppose the $\PP \in B_{m,n}$ with $\bounce(\PP) = (c_1,c_2,c_4,\ldots,c_{2m+1})$
Then $\atb^{-1}(\PP)$ is the unique polyomino $\PP' \in A_{m,n}$ with 
$$\diagonalLengths(\PP') = (1^{c_2},2^{c_4},\ldots,m^{c_{2m}},(m+1)^{c_{2m+1}},m^{c_{2m-1}},\ldots,1^{c_1}). $$

We omit the remainder of the details.
\end{proof}

\begin{example}
An example of the bijection from Theorem~\ref{jeffrey}. In this case $m=n=6$.
The sequence of diagonal lengths $(d_1,\ldots,d_{m+n-1})=(1,2,3,3,3,\overline{4},3,3,3,2,1)$
give $(x_0,\ldots,x_4)=(1,1,4,5,6)$ and $(y_0,\ldots,y_4)=(0,1,4,5,6)$.
Fill the cells according to $\atb$ to get a ribbon polyomino, $\atb(\PP)$, whose bounce path steps are weighted by the diagonal lengths of $\PP$.
\begin{align*}
\atb(\PP) 
=& [0,0] \times [0,1] ~ \cup ~ [0,1]\times [0,1] ~ \cup ~ [0,3] \times [1,2] ~ \cup ~ [3,4]\times [1,4] \\
& \cup ~ [3,4] \times [4,5] ~ \cup ~ [4,5]\times [4,5] ~ \cup ~ [4,5] \times [5,6] ~ \cup ~ [5,6]\times [5,6].
\end{align*}
\ \\[1em]
\centerline{
\begin{tabular}{c}
\includegraphics{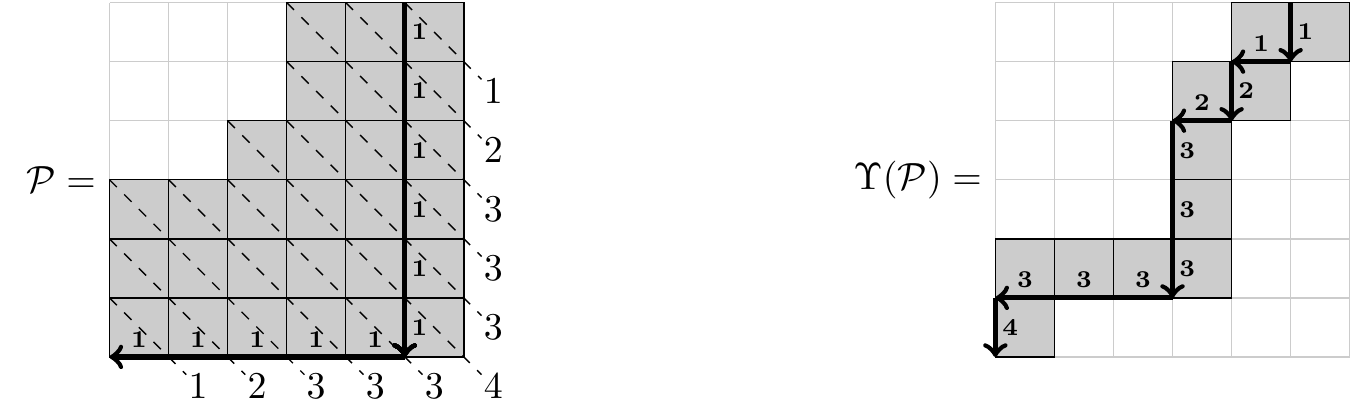}
\end{tabular}
}
\end{example}

This bijection $\atb$ does not seem to extend in a straightforward manner to (at least) all parallelogram polyominoes. 
One reason for this is as follows.
A bounce path of a parallelogram polyomino defines a ribbon polyomino by selecting those cells of the polyomino that are directly above (resp. right) the horizontal (resp. vertical) steps of the bounce path.
In the pictures these are the cells in which we write our labels of the bounce path steps.

Every anti-diagonal (line of slope $-1$) that passes through integer coordinates contains exactly one cell of this bouncing ribbon,
and can therefore be classified as horizontal or vertical based on the step of the bounce path which created that cell.
Our restricted bijection has the `nice' property that, 
in the polyomino of minimal bounce weight, 
a horizontal diagonal of length $i$ is mapped in the polyomino of minimal area 
to an horizontal bounce step of weight $i$, and similarly for vertical steps 
and diagonals.
This property cannot hold in general as is shown by the following example:

\begin{center}
\includegraphics{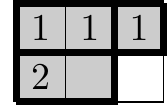}
\end{center}

In this polyomino, both vertical diagonals are of length $1$ and the two 
horizontal diagonals are of lengths 2 and 1.  
If the `nice' property were satisfied then it would imply that in the image 
there is a horizontal bounce step weighted by $2$ without any vertical step 
weighted by 2. From the definition this is impossible. 

\subsection{The sandpile model on $K_n$, special parallelogram polyominoes, and Haglund's bounce statistic}
In this subsection we will show a connection (Theorem~\ref{Olson}) between a class of polyominoes, 
the sandpile model on the complete graph $K_n$ having one sink, and Haglund's bistatistic $(\area,\hbounce)$ on the set of all Dyck paths.
We first need to introduce some notation relevant to Haglund's statistics.

A Dyck path of semi-length $n$ is a path from $(n,n)$ to $(0,0)$ that does not go above the main diagonal and takes steps in $\{\sss=(0,-1), \ww=(-1,0)\}$.
Let $\Dyck_n$ be the set of all such Dyck paths of semi-length $n$.
A general $D \in \Dyck_n$ may be represented as a word $D=d_1 d_2\ldots d_n$ where $d_i \in \{\sss,\ww\}$.
Given $D \in \Dyck_n$, let $\area(D)$ be the number of complete unit squares contained between $D$ and the diagonal line $x=y$. 
(The shaded triangular regions adjacent to the diagonal are not counted.)

Haglund's bounce path for a Dyck path $D$ is the path from $(n,n)$ to the origin, 
starting with an initial south step, and turning from south to west, 
and vice versa every time the path meets the Dyck path $D$ or the line $x=y$. 
It is important to note a subtle difference between Haglund's bounce path and our (polyomino) bounce path.
Haglund's bounce path does {\it{not}} turn when simply hitting a vertex on the lower path (as the polyomino bounce path does).
It needs to hit a west step in order to make a turn to the west. 
The difference is easily seen in the diagram in Example~\ref{HBE}. 
If the path in that diagram had been a polyomino bounce path,
then it would have turned when reaching the vertex (3,2).

If Haglund's bounce path on $D$ is $\sss^{a(1)} \ww^{a(1)} \sss^{a(2)} \ww^{a(2)} \cdots \sss^{a(k)}\ww^{a^(k)}$ then we
will write $\hBounce(D)=(a(1),a(2),\ldots,a(k))$. 
Haglund's bounce statistic, in this setup, is $\hbounce(D) = a(2)+2a(3)+\cdots+(k-1)a(k)$.
Let $$C_n(q,t) = \sum_{D \in \Dyck_n} q^{\area(D)} t^{\hbounce(D)}.$$

\begin{example}\label{HBE}
Let $D=\sss\sss\ww\sss\ww\sss\ww\ww\sss\ww = \sss^2 \ww^1 \sss^1 \ww^1 \sss^1 \ww^2 \sss^1 \ww^1\in \Dyck_5$. Then $\area(D)= 3$, $\hBounce(D)=(2,2,1)$, and $\hbounce=2+2(1)=4$.

\centerline{\includegraphics{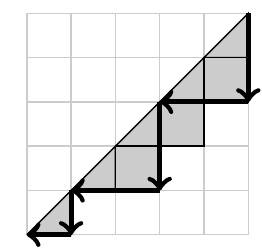}}
\end{example}

Recurrent configurations of the sandpile model on the graph $K_n$ were 
studied in Cori and Rossin~\citep{CoriRossin:2000} and are classified in terms of parking functions.
(See also Knuth~\citep{Knuth:2011}).
Let $D_n$ be the directed graph on the vertices $V=\{v_0,\ldots,v_{n-1}\}$ with single edges $E=\{(v_i,v_j)~:~ i>0 \mbox{ and } 0\leq j \leq n-1\}$.
(In other words vertex $v_0$ is a sink.)
The set of all stable configurations is
$$\Stable(D_n)=\{(x_1,\ldots,x_{n-1})~:~ 0\leq x_i \leq n-2 \mbox{ for all } 1\leq i\leq n-1\}.$$
An integer sequence $(t_1,\ldots,t_n)$ is a \emph{parking function} if there exists a permutation 
$\pi$ of $\{1,\ldots,n\}$ such that $i\geq t_{\pi(i)}$ for all $1\leq i \leq n$.
Cori and Rossin~\cite[Prop. 2.8]{CoriRossin:2000} proved that a stable configuration $(x_1,\ldots,x_{n-1})$ 
is recurrent iff $(n-1-x_1,\ldots,n-1-x_{n-1})$ is a parking function.
Let $\Parking_n$ be the set of all parking functions of length $n$.
The recurrent configurations are
$$\Rec(D_n) = \{x \in \Stable(D_n): (n-1-x_1,\ldots,n-1-x_{n-1}) \in \Parking_{n-1} \}.$$
Let us call a configuration $x \in \Stable(D_n)$ {\emph{sorted}} if it is weakly decreasing, 
and let $\RecDec(D_n)=\{ x\in \Rec(D_n)~:~ x\mbox{ is sorted}\}$.
For example, 
\begin{align*}
\Stable(D_3)&=\{(0,0),(0,1),(1,0),(1,1)\},\\
\Rec(D_3)&=\{(0,1),(1,0),(1,1)\}\\
\RecDec(D_3)&=\{(1,0),(1,1)\}.
\end{align*}
Using Cori and Rossin's~\citep{CoriRossin:2000} classification we have:
$$\RecDec(D_n) = \{(x_1,\ldots,x_{n-1})  ~:~ x_i \geq n-1-i \mbox{ for all } i \in \{1,\ldots,n-1\} \mbox{ and } x_1 \geq \cdots \geq x_{n-1}\}.$$
Section \ref{SEC:3} defined a canonical toppling process for every recurrent configuration.
This canonical toppling process was an ordered set partition of the vertices and recorded the order in which vertices toppled in parallel.
We now extend the same definition to recurrent configurations of $K_n$:
given $x \in \Rec(K_n)$, let $\CanonTop(x) = (Q_1,Q_2,\ldots,Q_k)$ be the ordered set partition of $\{1,\ldots,n-1\}$, whereby vertices in $Q_i$ topples at time $i$.

\begin{example}
Let $x=(4,4,3,2,0) \in \RecDec(K_6)$. The canonical toppling process of this configuration happens as follows:
$$x+(1,1,1,1,1) = (\dot{5},\dot{5},4,3,1) \to (1,1,\dot{6},\dot{5},3) \to (3,3,2,1,\dot{5}) \to (4,4,3,2,0)=x.$$
A dot above the number denotes an unstable vertex that will topple. 
Thus $\CanonTop(x)=(\{1,2\},\{3,4\},\{5\})$.
\end{example}

Given $x \in \Stable(K_n)$, define the diagram of $x$ as 
$$\diag(x)=\bigcup_{i=1}^{n-1} [n-1-i,2+x_i] \times [n-1-i,n-i] \subset \mathbb{R}^2.$$

Diagrams of sorted recurrent configurations in $K_n$ are precisely those diagrams which are polyominoes in $\Para_{n,n-1}$.
Let $$\BSP_{n,n-1} = \{ \diag(x) : x \in \RecDec(K_n)\}.$$
Every element $P \in \BSP_{n,n-1}$ is uniquely described by a Dyck path $\dyck(P)$ of 
semi-length $n-1$ which is the path of the boundary of $P$ from $(n,n-1)$ to $(1,0)$, since the upper path is fixed to $(\nn\ee)^n$.
See Figure~\ref{youngtodyck} for an example of this simple transformation.

\begin{theorem}
Let $u \in \RecDec(K_n)$, $P = \diag(u) \in \BSP_{n,n-1}$ and $D = \dyck(P) \in \Dyck_{n-1}$.
Then the following are equivalent:
\begin{enumerate}
\item[(i)] $\CanonTop(u)=(Q_1,\ldots,Q_k)$,
\item[(ii)] $\Bounce(P)=(|Q_1|,|Q_1|,\ldots,|Q_k|,|Q_k|)$, and
\item[(iii)] $\hBounce(D)=(|Q_1|,|Q_2|,\ldots, |Q_k|)$.
\end{enumerate}
Furthermore:
\begin{enumerate}
\item[(iv)] $\area(P) = (x_1+\cdots+x_n) - (n-1)(n-6)/2 = \area(D)+2(n-1)$.
\end{enumerate}
\end{theorem}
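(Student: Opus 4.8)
The plan is to prove the four equivalences/identities by unwinding the definitions of the three objects $u$, $P=\diag(u)$, and $D=\dyck(P)$ in parallel, and tracking how the canonical toppling on $K_n$ maps to the bounce paths. First I would record the explicit shape of $P$: since $u \in \RecDec(K_n)$ is sorted, $\diag(u) = \bigcup_{i=1}^{n-1}[n-1-i,\,2+x_i]\times[n-1-i,\,n-i]$ has fixed upper path $(\nn\ee)^n$ and its lower path is determined by the values $2+x_i$; the inequalities $x_i \geq n-1-i$ from Cori--Rossin's classification are exactly what guarantees $P$ lies in $\Para_{n,n-1}$. This makes $\dyck(P)$ well-defined as the lower boundary read from $(n,n-1)$ to $(1,0)$.

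Next I would establish (i) $\Leftrightarrow$ (ii). The key observation is structural: in $K_n$, every non-sink vertex is connected to every other vertex, so adding the sink's contribution $(1,\dots,1)$ and toppling the set $Q_1$ of initially-unstable vertices, then $Q_2$, etc., corresponds on the polyomino side to a bounce path that descends and turns in lockstep. Because $D_n$ is the "square" analogue $D_{n,n-1}$ via the diagram map, I would invoke (or mimic) the proof of Theorem~\ref{main:thm}(b): the sizes $|Q_1|,|Q_2|,\dots$ are precisely the run-lengths of the bounce path $\Bounce(P)$, and here the top/bottom alternation degenerates so that each $|Q_i|$ appears twice consecutively, giving $\Bounce(P)=(|Q_1|,|Q_1|,\ldots,|Q_k|,|Q_k|)$. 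Concretely, the first $|Q_1|$ south steps of $\Bounce(P)$ count the vertices toppling at time 1, the following $|Q_1|$ west steps reflect that all those toppled vertices each donated a grain, and then the path hits the part of the lower boundary recording the vertices that have now become unstable, i.e. $Q_2$.

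For (ii) $\Leftrightarrow$ (iii), I would compare the polyomino bounce path on $P$ with Haglund's bounce path on $D=\dyck(P)$. The paper has already flagged the subtle difference: Haglund's path turns only at west steps of $D$ or at the diagonal, whereas the polyomino bounce path turns at any lower-boundary vertex. Since here the upper path of $P$ is $(\nn\ee)^n$, the diagonal $x=y$ of the Dyck picture corresponds exactly to the upper path of $P$ (after the standard shift in Figure~\ref{youngtodyck}), and turning "from west back to south at the upper path" in $P$ matches turning at the diagonal in $D$; meanwhile the south-to-west turns happen at the lower path in both. The effect is that a single Haglund run $\sss^{a(i)}\ww^{a(i)}$ corresponds to the doubled pair $(|Q_i|,|Q_i|)$ in $\Bounce(P)$, so $a(i)=|Q_i|$ and $\hBounce(D)=(|Q_1|,\ldots,|Q_k|)$, as claimed. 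I expect this step — carefully matching the two different "turning conventions" through the coordinate shift — to be the main obstacle, and I would handle it by an explicit local analysis at each lattice point the bounce path visits.

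Finally, for (iv) I would compute $\area(P)$ directly from the explicit union defining $\diag(u)$: the $i$th horizontal strip $[n-1-i,\,2+x_i]\times[n-1-i,\,n-i]$ has width $2+x_i-(n-1-i) = x_i + 3 - n + i$, so
\begin{align*}
\area(P) &= \sum_{i=1}^{n-1}\bigl(x_i + 3 - n + i\bigr) = \sum_{i=1}^{n-1} x_i + (n-1)(3-n) + \binom{n}{2},
\end{align*}
and simplifying the constant $(n-1)(3-n) + \binom{n}{2} = (n-1)(3-n) + \tfrac{n(n-1)}{2} = \tfrac{(n-1)(6-2n+n)}{2} = -\tfrac{(n-1)(n-6)}{2}$ gives $\area(P) = (x_1+\cdots+x_{n-1}) - (n-1)(n-6)/2$ (noting $x_n$ does not exist, or is $0$ under the paper's indexing convention). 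For the second equality $\area(P)=\area(D)+2(n-1)$, I would use the standard fact relating the area of a parallelogram polyomino with fixed staircase upper path to the area of the associated Dyck path below the diagonal: the difference is the collection of $n-1$ shaded triangles plus the $n-1$ extra unit cells introduced by the coordinate shift between the $\Para_{n,n-1}$ picture and the $\Dyck_{n-1}$ picture, contributing exactly $2(n-1)$. This last identity is a routine bookkeeping check against Figure~\ref{youngtodyck}, so I would state it and leave the pixel-counting to the reader.
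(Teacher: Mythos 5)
Your proposal is correct and follows essentially the same route as the paper: the paper's own proof declares (i)$\Leftrightarrow$(ii) and (iv) routine and devotes itself only to (ii)$\Leftrightarrow$(iii), which it resolves exactly as you do, by shifting the Dyck path one unit to the right so that Haglund's turning convention and the polyomino bounce convention coincide for this class of staircase-topped polyominoes. Your additional details — the strip-by-strip area computation (including the observation that the paper's $x_n$ should be read as $x_{n-1}$ or $0$) and the explanation of why the staircase upper path forces each run length $|Q_i|$ to appear twice — are consistent with, and more explicit than, what the paper provides.
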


\begin{proof}
The only non-trivial proof is showing the equivalence of (ii) and (iii).
The Haglund bounce path of a Dyck path has a slightly different definition than the bounce path in a parallelogram polyomino. 
However, if one shifts the Dyck path one unit to the right, then the definitions coincide for this particular class thereby giving equivalence.
This equivalence is illustrated in the following diagram:
\begin{center}
\begin{tabular}{c}
\includegraphics{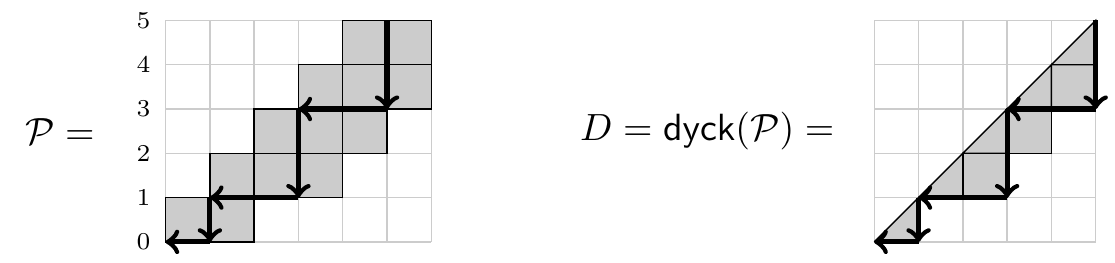}
\end{tabular}
\end{center}
The arrowed path in the diagram on the left shows that $\Bounce(P) = (2,2,2,2,1,1)$. 
The arrowed path in the diagram on the right is Haglund's bounce path (as it is defined on Dyck paths), and so $\hBounce(D)=(2,2,1)$.
\end{proof}

\begin{corollary}
Let $u \in \RecDec(K_n)$, $P = \diag(u) \in \BSP_{n,n-1}$ and $D = \dyck(P) \in \Dyck_{n-1}$.
Then $\bounceWeight(P) = 2(\hbounce(D)+n-1)$.
\end{corollary}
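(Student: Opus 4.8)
The plan is to reduce the statement to the identities established in the theorem immediately preceding this corollary. Write $\CanonTop(u)=(Q_1,\dots,Q_k)$. By part (ii) of that theorem, $\bounce(P)=(|Q_1|,|Q_1|,|Q_2|,|Q_2|,\dots,|Q_k|,|Q_k|)$, and by part (iii), $\hBounce(D)=(|Q_1|,\dots,|Q_k|)$. With these in hand, the corollary becomes a short computation comparing the two weighting conventions, so I do not anticipate a genuine obstacle.

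First I would expand $\bounceWeight(P)=\sum_{i}\lceil i/2\rceil c_i$, where $\bounce(P)=(c_1,\dots,c_{2k})$. Since $c_{2j-1}=c_{2j}=|Q_j|$ and $\lceil(2j-1)/2\rceil=\lceil 2j/2\rceil=j$, each of the two occurrences of $|Q_j|$ contributes weight $j$, so $\bounceWeight(P)=2\sum_{j=1}^{k}j\,|Q_j|$.

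Next I would expand Haglund's statistic $\hbounce(D)=a(2)+2a(3)+\cdots+(k-1)a(k)$ with $a(j)=|Q_j|$, giving $\hbounce(D)=\sum_{j=1}^{k}(j-1)|Q_j|$. Because $(Q_1,\dots,Q_k)$ is an ordered set partition of $\{1,\dots,n-1\}$, we have $\sum_{j=1}^{k}|Q_j|=n-1$; adding this to $\hbounce(D)$ promotes each coefficient $j-1$ to $j$, so that $\hbounce(D)+n-1=\sum_{j=1}^{k}j\,|Q_j|$. Comparing with the formula for $\bounceWeight(P)$ gives $\bounceWeight(P)=2(\hbounce(D)+n-1)$, as claimed.

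The only subtlety worth flagging is the off-by-one mismatch between the two conventions: Haglund weights the first south--west pair of his bounce path by $0$, whereas in $\bounceWeight$ the first pair already carries weight $1$. This discrepancy is exactly absorbed by the additive term $n-1=\sum_j|Q_j|$, which is why that constant appears. A reader could alternatively derive the identity from part (iv) of the preceding theorem, but the direct computation above is the most transparent route.
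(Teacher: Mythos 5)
Your computation is correct and is exactly the intended derivation: the paper states this corollary without proof as an immediate consequence of parts (ii) and (iii) of the preceding theorem, and your expansion of $\bounceWeight(P)=2\sum_j j\,|Q_j|$ versus $\hbounce(D)=\sum_j (j-1)|Q_j|$ with $\sum_j|Q_j|=n-1$ fills in precisely that gap.
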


\begin{figure}
\begin{center} \includegraphics{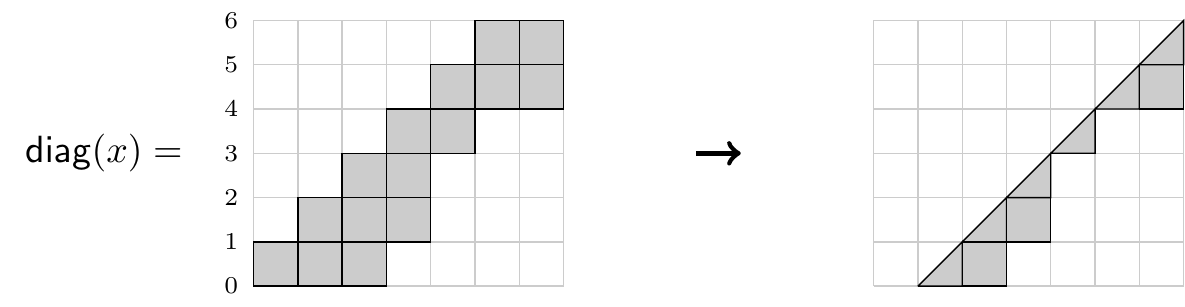} \end{center}
\caption{\label{youngtodyck} On the left is shown $P=\diag(x)$ where $x=(5,5,3,2,2,1)\in \RecDec(K_7)$. 
The Dyck path $\dyck(P)=\sss^2 \ww^2 \sss \ww \sss^2 \ww \sss \ww^2$
is shown on the right.}
\end{figure}

Define the polynomial 
$$\sgdistributionSticked_{n}(q,t) =\displaystyle \sum_{ \PP \in \BSP_{n,n-1}} q^{\area(\PP)} t^{\bounceWeight(\PP)}.$$

\begin{theorem} \label{Olson}
$\sgdistributionSticked_n(q,t) = (qt)^{2(n-1)}\sgdistributionHaglund_{n-1}(q,t^2).$
\end{theorem}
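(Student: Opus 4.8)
The plan is to derive this identity as an immediate consequence of the bijection $\dyck : \BSP_{n,n-1} \to \Dyck_{n-1}$ together with the two translation formulas already established just above: the area identity $\area(\PP) = \area(D) + 2(n-1)$ from part (iv) of the preceding theorem, and the bounce-weight identity $\bounceWeight(\PP) = 2(\hbounce(D) + n-1)$ from the corollary immediately preceding this statement. So the proof is essentially a change of summation index.

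Concretely, I would argue as follows. Since $\PP \mapsto \dyck(\PP)$ is a bijection from $\BSP_{n,n-1}$ onto $\Dyck_{n-1}$, we may reindex the defining sum of $\sgdistributionSticked_n$ over $D \in \Dyck_{n-1}$, writing $\PP = \dyck^{-1}(D)$. Substituting the two formulas gives
\begin{align*}
\sgdistributionSticked_n(q,t) &= \sum_{\PP \in \BSP_{n,n-1}} q^{\area(\PP)} t^{\bounceWeight(\PP)} = \sum_{D \in \Dyck_{n-1}} q^{\area(D)+2(n-1)}\, t^{2(\hbounce(D)+n-1)} \\
&= q^{2(n-1)} t^{2(n-1)} \sum_{D \in \Dyck_{n-1}} q^{\area(D)} (t^2)^{\hbounce(D)} = (qt)^{2(n-1)}\, \sgdistributionHaglund_{n-1}(q,t^2),
\end{align*}
where the last equality is just the definition $\sgdistributionHaglund_{n-1}(q,t) = C_{n-1}(q,t) = \sum_{D \in \Dyck_{n-1}} q^{\area(D)} t^{\hbounce(D)}$ with $t$ replaced by $t^2$.

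There is no real obstacle here; the content of the theorem has already been front-loaded into the bijection $\dyck$ and the two identities, so the only thing to check is that the exponents factor out uniformly in $n$, which they do. If anything, the one point worth a sentence of care is that the constant shifts $2(n-1)$ in both the area and the bounce-weight formulas are independent of the particular path $D$, which is exactly what lets the powers of $q$ and $t$ be pulled outside the sum; this in turn traces back to the fact that in $\BSP_{n,n-1}$ the upper path is fixed to $(\nn\ee)^n$ and the bounce path has the rigid form $(|Q_1|,|Q_1|,\ldots,|Q_k|,|Q_k|)$, so each run of the polyomino bounce path is counted with exactly the paired weight that doubles Haglund's statistic and contributes the extra $n-1$.
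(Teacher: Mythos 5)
Your proof is correct and is essentially identical to the paper's own argument: both reindex the sum over $\BSP_{n,n-1}$ via the bijection $\dyck$ onto $\Dyck_{n-1}$ and substitute the area and bounce-weight translation formulas to factor out $(qt)^{2(n-1)}$. No issues.
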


\begin{proof}
We have 
\begin{align*}
\sgdistributionSticked_n(q,t) 
&= \sum_{ \PP \in \BSP_{n,n-1}} q^{\area(\PP)} t^{\bounceWeight(\PP))} \\
&= \sum_{D \in \Dyck_{n-1}} q^{\area(D)+2(n-1)} t^{2(\hbounce(D)+n-1)} \\
&= (qt)^{2(n-1)} \sgdistributionHaglund_{n-1}(q,t^2). \qedhere
\end{align*}
\end{proof}

One is led to Conjecture~\ref{first:sym} by considering two configuration statistics pertaining to (sorted) recurrent configurations on $K_n$.
For the sandpile model on $D_n$, the area of a Dyck path $D$ is related to the number of grains in the mapped sorted recurrent configuration $u$ since
$$ \area(D) + \sum_{i=0}^{n-2} i = \sum_{i=1}^{n-1} u_i.$$
Also, the Haglund bounce statistic of a Dyck path $D$ may be interpreted 
as the sum over all vertices of the number of times a vertex is observed stable before it topples during the parallel toppling process $\CanonTop(u)$.
More precisely, all vertices in $Q_i$ are observed $i$ times as stable: during the initial toppling of the sink and the toppling of each set $Q_1,\ldots Q_{i-1}$.
Conjecture~\ref{first:sym} is the result of translating these configuration statistics to the sandpile model on the graph $D_{m,n}$.

\appendix
\section{Appendix}
\subsection{Two characterisations of parallelogram polyominoes}
The bounce path may be used to give the following characterisation of parallelogram polyominoes.

\begin{characterization}\label{bpcharact}
Let $0\leq a_1 \leq \cdots a_{m-1} <n$ and $0\leq b_1 \leq \cdots \leq b_n<m$ be two weakly increasing sequences of integers. 
Let $P$ be the intersection of the Young diagram whose corner is at $(m,0)$ having heights $(n,1+a_{m-1},\ldots,1+a_1)$ from right to left,
and the Young diagram whose corner is at $(0,n)$ having widths $(1+b_n,\ldots,1+b_1)$ from top to bottom.
Then $P \in \Para_{m,n}$ iff there exist sequences $(x_0,\ldots,x_{k+1})$ and $(y_0,\ldots,y_{k+1})$ with
\begin{enumerate}
\item[(i)] $0=x_0<x_1<\cdots<x_{k}=m-1 <x_{k+1}=m$,
\item[(ii)] $0=y_0\leq y_1<y_2<\cdots<y_{k+1}=n$,
\item[(iii)] $1+a_i \in \{1+y_{j+1},\ldots,y_{j+2}\}$ for all $1+x_j\leq i \leq x_{j+1}$ and $0 \leq j <k$,
\item[(iv)] $1+b_i \in \{1+x_j,\ldots,x_{j+1}\}$ for all $1+y_{j}\leq i \leq y_{j+1}$ and $0\leq j \leq k$.
\end{enumerate}
\end{characterization}

\begin{example} The following polyomino in $\Para_{9,7}$ is formed from the sequences $(a_1,\ldots,a_8)=(2,3,4,4,5,6,6,6)$
and $(b_1,\ldots,b_7)=(0,0,2,4,4,6,8)$. From this one has the sequences: $(x_0,\ldots,x_5)=(0,1,4,5,8,9)$ and $(y_0,\ldots,y_5)=(0,2,3,5,6,7)$ as illustrated in the diagram:\\[1em]
\centerline{\includegraphics{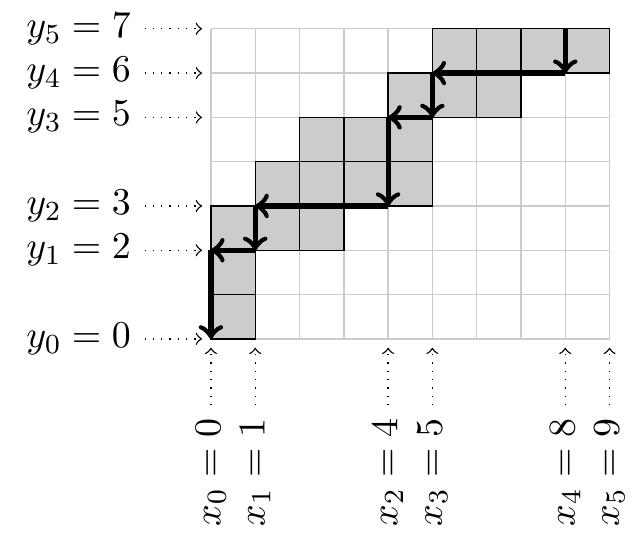}}

The next polyomino is $\PP_2$ from Example 1.2, and is formed from the sequences $(a_1,\ldots,a_6)=(0,0,1,2,2,2)$ and $(b_1,b_2,b_3)=(3,3,6)$.
The $x$ and $y$ sequences are $(x_0,x_1,x_2,x_3)=(0,3,6,7)$ and $(y_0,y_1,y_2)=(0,2,3)$.\\[1em]
\centerline{\includegraphics{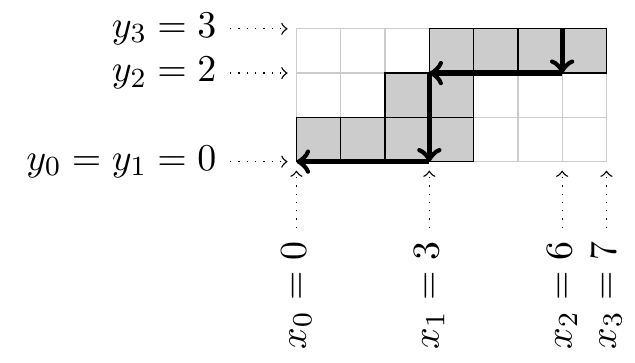}}
\end{example}

We may give an alternative non-graphical characterisation in terms of properties of the integer sequence instead. 

Let $m$ and $n$ be fixed integers.  Given two weakly decreasing sequences $\lambda=(\lambda_1,\ldots,\lambda_m)$ and $\mu=(\mu_1,\ldots,\mu_n)$ let $\Myshape{\lambda}{\mu}{m}{n}$ be the collection of $m \times n$ cells in the plane, with $\lambda_i$ cells missing from the top of column $i$, and $\mu_j$ cells missing from the right of row $j$.  We are interested in pairs of partitions $\lambda$ and $\mu$ which do not `touch' in $\Myshape{\lambda}{\mu}{m}{n}$, as illustrated in the following diagram:\\[1em]
\centerline{\includegraphics{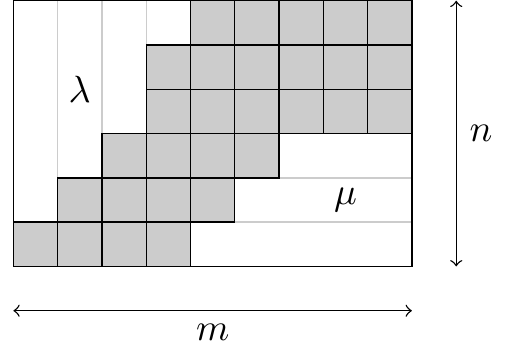}}

\begin{example}
If $\lambda=(3,2,2,1,1,0,0)$ and $\mu=(4,3,1,0)$ then $\Myshape{\lambda}{\mu}{7}{4}$ is $\PP_1$ in Example~\ref{exampleoneone}.
If $\lambda=(2,2,2,1,1,1,0)$ and $\mu=(6,3,0,0)$ then $\Myshape{\lambda}{\mu}{7}{4}$ is $\PP_3$ in Example~\ref{exampleoneone}.
\end{example}

Given $\lambda=(\lambda_1,\ldots,\lambda_m)$ and $\mu=(\mu_1,\ldots,\mu_n)$ contained in the  $m \times n$ rectangle as mentioned above.
Define
\begin{align*}
\lambda_i' &= \# \{ 1\leq k \leq n ~:~ \mu_k > m-i\}\\ 
\mu_j' &= \#\{ 1\leq k \leq m ~:~ \lambda_k > n-j\}.
\end{align*}
for all $i \in \{1,\ldots,m\}$ and $j \in \{1,\ldots,n\}$.
Define $\lambda'=(\lambda_1',\ldots,\lambda_m')$ and $\mu'=(\mu_1',\ldots,\mu_n')$.
Notice that $\lambda_i'$ is the number of missing cells between the bottom of the $m\times n$ bounding rectangle and the lowest cell in column $i$ of $\Myshape{(0,\ldots,0)}{\mu}{m}{n}$.
Similarly $\mu_j'$ is the number of missing cells between the right border of the bounding rectangle and the rightmost cell in row $j$ of $\Myshape{\lambda}{(0,\ldots,0)}{m}{n}$.

\begin{theorem} \label{appendixthm}
Let $\lambda=(\lambda_1,\ldots,\lambda_m)$ and $\mu=(\mu_1,\ldots,\mu_n)$ be two weakly decreasing sequences of non-negative integers where 
$\lambda_i \in \{0,\ldots,n\}$ and $\mu_i \in \{0,\ldots,m\}$.  Then $\Myshape{\lambda}{\mu}{m}{n}\in \Para_{m,n}$ if and only if 
\begin{enumerate}
\item[(i)]  $\mu_{n-\lambda_i}<m-i$ for all $1\leq i < m$,
\item[(ii)] $\lambda_{m-\mu_i}<n-i$ for all $1\leq i < n$, and
\item[(iii)] $\lambda_m = \mu_n = 0$.
\end{enumerate}
\end{theorem}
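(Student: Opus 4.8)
The plan is to read off the combinatorial content of $\Myshape{\lambda}{\mu}{m}{n}$ cell by cell and then reduce the problem to elementary inequalities among the four sequences $\lambda,\mu,\lambda',\mu'$. First I would note that the cell $[i-1,i]\times[j-1,j]$ lies in $\Myshape{\lambda}{\mu}{m}{n}$ exactly when $\lambda_i\le n-j$ and $\mu_j\le m-i$. Since $\lambda$ and $\mu$ are weakly decreasing, this shows that the $j$th row of $\Myshape{\lambda}{\mu}{m}{n}$ is precisely the horizontal strip $[\mu'_j,\,m-\mu_j]\times[j-1,j]$ and that the $i$th column is precisely the vertical strip $[i-1,i]\times[\lambda'_i,\,n-\lambda_i]$, with $\lambda',\mu'$ the conjugate sequences defined just before the theorem. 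Because the four boundary sequences $(\mu'_j)_j$, $(m-\mu_j)_j$, $(\lambda'_i)_i$ and $(n-\lambda_i)_i$ are all weakly increasing, $\Myshape{\lambda}{\mu}{m}{n}$ is always a \emph{double staircase}: a stack of horizontal strips whose left and right ends both move weakly rightward as one goes up, equivalently a stack of vertical strips whose lower and upper ends both move weakly upward as one goes right.

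The heart of the argument is the following criterion: a double staircase of this form lies in $\Para_{m,n}$ if and only if
\begin{enumerate}
\item[(a)] it reaches all four sides of the bounding box, i.e.\ $\mu'_1=0$, $\lambda'_1=0$, $\lambda_m=0$ and $\mu_n=0$, and
\item[(b)] consecutive rows meet along an edge of positive length, i.e.\ $\mu'_{j+1}<m-\mu_j$ for all $1\le j<n$.
\end{enumerate}
The ``only if'' direction is easy: a member of $\Para_{m,n}$ must touch each side of $[0,m]\times[0,n]$, and for a double staircase only column $1$ can touch $x=0$, only row $1$ can touch $y=0$, while touching $y=n$ and $x=m$ forces $\lambda_m=0$ and $\mu_n=0$; and since a parallelogram polyomino with full bounding box is connected and row-convex with every row non-empty, consecutive rows are edge-adjacent, which with the monotonicity of the boundaries is exactly (b). For the ``if'' direction I would check that under (a) and (b) every row and every column is a non-empty interval, the left-and-top frontier assembles into a single monotone lattice path $U$ from $(0,0)$ to $(m,n)$, the bottom-and-right frontier into a single monotone lattice path $L$ from $(0,0)$ to $(m,n)$, and (b) keeps $U$ and $L$ disjoint away from their endpoints, so the region between $U$ and $L$, which is $\Myshape{\lambda}{\mu}{m}{n}$, is a parallelogram polyomino with bounding box $[0,m]\times[0,n]$. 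It will be convenient to record here that, under (a), condition (b) is equivalent to its ``conjugate'' form (b$'$): $\lambda'_{i+1}<n-\lambda_i$ for all $1\le i<m$ (consecutive columns meeting along an edge of positive length). Proving this equivalence is the one place where something must be said: if (b$'$) fails at some $i$, then from $\lambda'_{i+1}=\#\{k:\mu_k\ge m-i\}\ge n-\lambda_i$ together with $\mu_n=0$ one exhibits two consecutive rows whose intervals are disjoint, contradicting (b), and symmetrically.

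It then remains to match (a), (b) and (b$'$) with conditions (i), (ii), (iii). Condition (iii) is literally two of the four equalities in (a). For the remaining two, observe that $\mu'_1=\#\{k:\lambda_k=n\}$, so $\mu'_1=0\iff\lambda_1\le n-1$, which is exactly the requirement that the index $n-\lambda_1$ appearing in (i) for $i=1$ be admissible; likewise $\lambda'_1=0\iff\mu_1\le m-1$, the admissibility of the index in (ii) for $j=1$. (Since the hypotheses allow $\lambda_1=n$ or $\mu_1=m$, I read (i) and (ii) with the natural convention that an out-of-range index makes the inequality false; the discussion just given shows this is what is forced.) Finally, using $\lambda'_{i+1}=\#\{k:\mu_k\ge m-i\}$ and that $\mu$ is weakly decreasing,
\[
\lambda'_{i+1}<n-\lambda_i\iff n-\lambda_i\ge\lambda'_{i+1}+1\iff\mu_{\,n-\lambda_i}<m-i,
\]
so (b$'$) is exactly condition (i); and symmetrically, using $\mu'_{j+1}=\#\{k:\lambda_k\ge n-j\}$, condition (b) is exactly $\lambda_{\,m-\mu_j}<n-j$, i.e.\ condition (ii). Chaining the equivalences — membership $\Myshape{\lambda}{\mu}{m}{n}\in\Para_{m,n}$ is equivalent to ``(a) and (b)'', hence (by the self-duality) to ``(a) and (b$'$)'', hence (by the translations above) to ``(i), (ii) and (iii)'' — then finishes the proof.

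I expect the main obstacle to be the second step: turning the intuitively clear picture of a double staircase into a rigorous verification that (a) and (b) really are what is needed to produce two monotone paths meeting only at their endpoints (so that the region is genuinely a parallelogram polyomino and not merely row- and column-convex), together with the self-duality (b)$\iff$(b$'$) under (a). Steps one and three are bookkeeping with weakly monotone integer sequences. An alternative would be to bypass step two altogether: substituting $a_i=n-1-\lambda_i$ for $1\le i<m$ and $b_j=m-1-\mu_j$ for $1\le j\le n$ identifies $\Myshape{\lambda}{\mu}{m}{n}$ with the polyomino of Characterization~\ref{bpcharact}, reducing the claim to deciding when the sequences $(x_\bullet)$ and $(y_\bullet)$ demanded there exist; but that merely relocates the same combinatorial content into the analysis of those sequences.
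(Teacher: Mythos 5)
Your proposal is correct and follows essentially the same route as the paper: both reduce membership in $\Para_{m,n}$ to the conditions that consecutive rows/columns overlap in a segment of positive length together with the boundary conditions, and then translate these via the conjugate sequences $\lambda',\mu'$ and monotonicity into (i), (ii), (iii). The only difference is one of care rather than substance — the paper dismisses the geometric step as ``a simple examination of adjacent columns and rows'' and lists both overlap families outright, whereas you derive it via the two boundary paths, observe that the row and column overlap conditions are equivalent given the boundary conditions, and make explicit the out-of-range index convention needed when $\lambda_1=n$ or $\mu_1=m$.
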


\begin{proof}
A simple examination of adjacent columns and rows of cells tells us that $\Myshape{\lambda}{\mu}{m}{n} \in \Para_{m,n}$ if and only if
\begin{enumerate}
\item[(i)] $n> \lambda_i +\lambda_{i+1}'$ for all $1\leq i < m$, 
\item[(ii)] $m > \mu_i + \mu_{i+1}'$ for all $1\leq i <n$, and 
\item[(iii)] $\lambda_m = \mu_n = 0$.
\end{enumerate}
Since $\lambda_i'= \#\{\mu_1,\ldots,\mu_n > m-i\}$ we have that condition (i) is equivalent to 
(i') $\#\{ 1\leq k \leq n :  \mu_k < m-i\} > \lambda_i$ for all $1\leq i < m$. 
Similarly, condition (ii) is equivalent to 
(ii') $\#\{ 1\leq k \leq m : \lambda_k < n-i\} > \mu_i$ for all $1\leq i < n$.

Conditions (i') and (ii') may be further simplified by taking advantage of the fact that the sequences are weakly decreasing.
They are equivalent to: (i'') $\mu_{n-\lambda_i}<m-i$ for all $1\leq i < m$, and (ii'') $\lambda_{m-\mu_i}<n-i$ for all $1\leq i < n$.
\end{proof}

It follows from Theorem~\ref{appendixthm} by setting $\lambda_i=n-(1+a_i)$ for all $i \in \{1,\ldots,m-1\}$ 
and $\mu_i = m-(1+b_i)$ for all $i \in \{1,\ldots,n\}$ that we have an alternative characterisation:

\begin{characterization} \label{seqcharact}
Let $0\leq a_1 \leq \cdots a_{m-1} <n$ and $0\leq b_1 \leq \cdots \leq b_n<m$ be two weakly increasing sequences of integers.
Let $P$ be the intersection of the Young diagram whose corner is at $(m,0)$ having heights $(n,1+a_{m-1},\ldots,1+a_1)$ from right to left,
and the Young diagram whose corner is at $(0,n)$ having widths $(1+b_n,\ldots,1+b_1)$ from top to bottom.
Then $P \in \Para_{m,n}$ iff  
\begin{enumerate}
\item[(i)] $i \leq b_{1+a_i}$ for all $1\leq i \leq m-1$,
\item[(ii)] $i \leq a_{1+b_i}$ for all $1\leq i \leq n-1$, and
\item[(iii)] $b_n=m-1$.
\end{enumerate}
\end{characterization}

\subsection{More $F_{\star,\star}(q,t)$ polynomials}
\label{more:polys}
Some more polynomials calculated for Section \ref{SEC:5}. One can check by hand that they are symmetric in $q$ and $t$.
\begin{align*}
{ F_{4,\star}(q,t;z) } 
&= {\dfrac {z{t}^{4}{q}^{4} \left( 
1 
+{q}^{3}{t}^{2}z+{q}^{2}{t}^{3}z
-{q}^{5}{t}^{4}{z}^{2}-{q}^{4}{t}^{5}{z}^{2}
-{q}^{7}{t}^{7}{z}^{3}
\right) }{ 
\left( 1-qtz \right) 
\left( 1-{q}^{2}{t}^{2}z \right) 
\displaystyle\prod_{(i,j) \in \{(1,2),(1,3),(1,4)\}} \left( 1-q^i t^j z  \right) \left( 1-q^j t^i z  \right)
}
}\\
\noalign{}\\
{F_{5,\star}(q,t;z) }
&={\dfrac { \left( 
\begin{array}{l}
1 
+{q}^{2}{t}^{3}z
+{q}^{3}{t}^{2}z
+{q}^{2}{t}^{4}z
+{q}^{4} {t}^{2}z
+{q}^{3}{t}^{3}z
-{q}^{7}{t}^{9}{z}^{3} \\
-{q}^{9}{t}^{7}{z}^{3}
-{q}^{8}{t}^{8}{z}^{3}
-{q}^{8}{t}^{9}{z}^{3}
-{q}^{9}{t}^{8}{z}^{3}
-{q}^{11}{t}^{11}{z}^{4}
\end{array}
\right) {q}^{5}{t}^{5}z}{ 
\left(1- qtz \right)  
\left( 1-{q}^{2}{t}^{2}z \right) 
\displaystyle\prod_{(i,j) \in \{
(1,2),
(1,3),
(1,4),
(1,5)
\}} (1-q^i t^j z)(1-q^jt^i z)
}}\\
\noalign{}\\
{F_{6,\star}(q,t;z) }
&= {\dfrac { \left( 
\begin{array}{l}
1
+{q}^{3}{t}^{3}z
+{q}^{2}{t}^{4}z
+{q}^{4}{t}^{2}z
+{q}^{2}{t}^{5}z
+{q}^{3}{t}^{4}z
+{q}^{4}{t}^{3}z\\
+{q}^{5}{t}^{2}z
-{q}^{6}{t}^{6}{z}^{2}
-{q}^{7}{t}^{4}{z}^{2}
-2\,{q}^{10}{t}^{9}{z}^{3}
-{q}^{11}{t}^{9}{z}^{3}
+{q}^{14}{t}^{11}{z}^{4}\\
+{q}^{13}{t}^{12}{z}^{4}
+{q}^{17}{t}^{14}{z}^{5}
-{q}^{10}{t}^{10}{z} ^{3}
-{q}^{11}{t}^{8}{z}^{3}
+{q}^{15}{t}^{10}{z}^{4}\\
+{q}^{16}{t}^{14}{z}^{5}
-{q}^{18}{t}^{18}{z}^{6}
+{q}^{10}{t}^{15}{z}^{4}
-2\,{q}^{9}{t}^{10}{z}^{3}
+{q}^{11} {t}^{14}{z}^{4}\\
+{q}^{12}{t}^{13}{z}^{4}
-{q}^{17}{t}^{19}{z}^{6}
+{q}^{14}{t}^{17}{z}^{5}
-{q}^{8}{t}^{11}{z}^{3}
-{q}^{9}{t}^{11}{z}^{3}\\
+{q}^{15}{t}^{16}{z}^{5}
+{q}^{16}{t}^{15}{z}^{5}
-{q}^{19}{t}^{17}{z}^{6}
-{q}^{13}{t}^{13}{z}^{4}
-{q}^{21}{t}^{21}{z}^{7}\\
-{q}^{7}{t}^{10}{z}^{3}
-{q}^{5}{t}^{6}{z}^{2}
-{q}^{5}{t}^{7}{z}^{2}
+2\,{q}^{11}{t}^{12}{z}^{4}
-{q}^{9}{t}^{8}{z}^{3}
-{q}^{7}{t}^{11}{z}^{3}\\
+{q}^{10}{t}^{12}{z}^{4}
+{q}^{8}{t}^{8}{z}^{3}
-{q}^{11}{t}^{7}{z}^{3}
+2\,{q}^{12}{t}^{11}{z}^{4}
-{q}^{10}{t}^{8}{z}^{3}\\
+3\,{q}^{12}{t}^{12}{z}^{4}
+{q}^{12}{t}^{10}{z}^{4}
-{q}^{8}{t}^{10}{z}^{3}
+{q}^{11}{t}^{11}{z}^{4}
+{q}^{14}{t}^{16}{z}^{5}\\
-{q}^{17}{t}^{18}{z}^{6}
+{q}^{10}{t}^{14}{z}^{4}
-{q}^{18}{t}^{17}{z}^{6}
-{q}^{4}{t}^{7}{z}^{2}
-{q}^{8}{t}^{9}{z}^{3}\\
+{q}^{10}{t}^{13}{z}^{4}
+{q}^{13}{t}^{10} {z}^{4}
-{q}^{6}{t}^{11}{z}^{3}
+{q}^{11}{t}^{13}{z}^{4}
-3\,{q}^{9}{t}^{9}{z}^{3}\\
+{q}^{13}{t}^{11}{z}^{4}
-{q}^{19}{t}^{16}{z}^{6}
+{q}^{15}{t}^{15}{z}^{5}
+{q}^{14}{t}^{10}{z}^{4}
-{q}^{6}{t}^{5}{z}^{2}\\
-{q}^{10}{t}^{7}{z}^{3}
-{q}^{11}{t}^{6}{z}^{3}
-{q}^{7}{t}^{5}{z}^{2}
-{q}^{16}{t}^{19}{z}^{6} 
\end{array}
\right) {q}^{6}{t}^{6}z}{
\left( 1-qtz \right)  
\left( 1-{q}^{2}{t}^{2}z \right)  
\displaystyle\prod_{(i,j) \in \left\{(1,2),(1,3),(1,4),\atop (1,5),(1,6),(2,3)\right\}} (1-q^it^j z)(1-q^j t^i z)
}}.
\end{align*}

Here we list the polynomials $F_{m,n}(q,t)$ for small values of $m$ and $n$.
We use $ F_{m,n}(q,t) = (qt)^k(m_{i,j})_{0\leq i,j \leq K}$ to abbreviate
$$ F_{m,n}(q,t) = \sum_{0\leq i,j \leq K} m_{i,j}q^{k+i}t^{k+j}.$$

\newcommand{\nextone}{\noindent\hspace*{0cm}}
\nextone
$
\begin{array}{rclrclrcl}
F_{2,2}(q,t) &=& (qt)^3 \Mat {1&1\\ 1&0}; &
F_{3,3}(q,t) &=&  (qt)^5 \Mat{1&1&2&1&1\\ 1&2&2&1&0\\ 2&2&1&0&0\\ 1&1&0&0&0\\ 1&0&0&0&0}; &
F_{2,3}(q,t) &=&  (qt)^4 \Mat{ 1&1&1\\ 1&1&0\\ 1&0&0} ; \\
\noalign{}\\
F_{2, 4}(q,t) &=& (qt)^6 \Mat{1&1&1&1\\ 1&1&1&0\\ 1&1&0&0\\ 1&0&0&0}; & 
F_{3, 4}(q,t) &=& (qt)^6
\Mat{
1&1&2&2&2&1&1\\ 
1&2&3&3&2&1&0\\ 
2&3&4&2&1&0&0\\ 
2&3&2&1&0&0&0 \\ 
2&2&1&0&0&0&0\\ 
1&1&0&0&0&0&0\\ 
1&0&0&0&0&0&0}; &
F_{2, 5}(q,t) &=& (qt)^6
\Mat{
1&1&1&1&1\\ 
1&1&1&1&0\\ 
1&1&1&0&0\\ 
1&1&0&0&0\\ 
1&0&0&0&0};
\end{array}
$
\ \\[1em]

\nextone
$
\begin{array}{rclrclrcl}
F_{4, 4}(q,t) &=& (qt)^7
\Mat{
1&1&2&3&3&3&3&2&1&1\\ 
1&2&4&5&6&5&4&2&1&0\\ 
2&4&7&8&7&5&2&1&0&0\\ 
3&5&8&7&5&2&1&0&0&0\\ 
3&6&7&5&2&1&0&0&0&0\\ 
3&5&5&2&1&0&0&0&0&0\\ 
3&4&2&1&0&0&0&0&0&0\\ 
2&2&1&0&0&0&0&0&0&0\\ 
1&1&0&0&0&0&0&0&0&0\\ 
1&0&0&0&0&0&0&0&0&0};&
F_{3, 5}(q,t) &=& (qt)^7
\Mat{
1&1&2&2&3&2&2&1&1\\ 
1&2&3&4&4&3&2&1&0\\ 
2&3&5&5&4&2&1&0&0\\ 
2&4&5&4&2&1&0&0&0\\ 
3&4&4&2&1&0&0&0&0\\ 
2&3&2&1&0&0&0&0&0\\ 
2&2&1&0&0&0&0&0&0\\ 
1&1&0&0&0&0&0&0&0 \\ 
1&0&0&0&0&0&0&0&0 };\\
\noalign{} \\
F_{2, 6}(q,t) &=& (qt)^7
\Mat{1&1&1&1&1&1\\ 
1&1&1&1&1&0\\ 
1&1&1&1&0&0\\ 
1&1&1&0&0&0\\ 
1&1&0&0&0&0\\ 
1&0&0&0&0&0}.
\end{array}
$

\section*{Acknowledgments}
MD was supported by grant no. 090038013 from the Icelandic Research Fund.
YLB would like to thank the Laboratoire d'informatique Gaspard-Monge, Marne-la-Vall\'ee for their kind hospitality where part of this work was carried out.
The authors would also like to thank the organisers of the `Statistical physics, combinatorics and probability: from discrete to continuous models' trimester at the Centre \'Emile Borel, Institut Henri Poincar\'e, where part of this work was carried out.


\begin{thebibliography}{0}
\bibitem{Bernardi:2008}
	Bernardi, O.
	``Tutte polynomial, subgraphs, orientations and sandpile model: new connections via embeddings.''
	\textit{Electronic Journal of Combinatorics}  15, no. 1 (2008): P109.
\bibitem{BousquetMelou:1996} 
	Bousquet-M\'elou, M. 
	``A method for the enumeration of various classes of column-convex polygons.''
	\textit{Discrete Mathematics} 154 (1996): 1--25. 
\bibitem{BousquetMelouUs:2010}
	Bousquet-M\'elou, M., A. Claesson, M. Dukes,  and S. Kitaev.
	``(2+2)-free posets, ascent sequences and pattern avoiding permutations.''
	\textit{Journal of Combinatorial Theory Series A} 117, no. 7 (2010): 884--909.
\bibitem{ChebikinPylyavskyy:2005}
	Chebikin, D., and P. Pylyavskyy.
	``A family of bijections between $G$-parking functions and spanning trees.'' 
	\textit{Journal of Combinatorial Theory Series A} 110, no. 1 (2005): 31--41. 
\bibitem{ClaessonDukesKubitzke:2011} 
	Claesson, A., M. Dukes, and M. Kubitzke.  
	``Partition and composition matrices.''  
	\textit{Journal of Combinatorial Theory Series A} 118, no. 5 (2011): 1624--1637.     
\bibitem{CoriLeBorgne:2003}
        Cori, R., and Y. Le~Borgne.
        ``The sandpile model and Tutte polynomials.''
        \textit{Advances in Applied Mathematics} 30, no. 1-2 (2003): 44--52.
\bibitem{CoriPoulalhon:2002}
	Cori, R., and D. Poulalhon. 
	``Enumeration of $(p,q)$-parking functions.''
	\textit{Discrete Mathematics} 256 (2002): 609--623.
\bibitem{CoriRossin:2000} 
	Cori, R., and D. Rossin. 
	``On the sandpile group of dual graphs.'' 
	\textit{European Journal of Combinatorics} 21 (2000): 447--459.
\bibitem{Dhar:2006}
        Dhar, D.
        ``Theoretical studies of self-organized criticality.''
        \textit{Physica A. Statistical Mechanics and its Applications} 369, no. 1 (2006): 29--70.
\bibitem{DukesJelinekKubitzke:2011} 
	Dukes, M., V. Jel\'inek, and M. Kubitzke. 
	``Composition matrices, (2+2)-free posets and their specializations.'' 
	\textit{Electronic Journal of Combinatorics} 18, no. 1 (2011): P44.  
\bibitem{DukesParviainen:2010}
	Dukes, M., and  R. Parviainen.
	``Ascent sequences and upper triangular matrices containing non-negative integers.''
	\textit{Electronic Journal of Combinatorics} 17, no. 1 (2010): R53. 
\bibitem{EggeHaglundKremerKillpatrick:2003} 
	Egge, E., J. Haglund, D. Kremer, and K. Killpatrick.
	``A Schr\"oder generalization of Haglund's statistic on Catalan paths.''
	\textit{Electronic Journal of Combinatorics} 10 (2003): P16.
\bibitem{GarsiaHaglund:2001} 
	Garsia, A. M., and J. Haglund. 
	``A positivity result in the theory of Macdonald polynomials.'' 
	\textit{Proceedings of the National Academy of Sciences} 98, no. 8 (2001): 4313--4316.
\bibitem{GarsiaHaglund:2002} 
	Garsia, A. M., and J. Haglund. 
	``A proof of the $q,t$-Catalan positivity conjecture.'' 
	\textit{Discrete Mathematics} 256, no. 3 (2002): 677--717.
\bibitem{GarsiaHaiman:1996} 
	Garsia, A. M., and M. Haiman. 
	``A remarkable $q,t$-Catalan sequence and $q$-Lagrange inversion.'' 
	\textit{Journal of Algebraic Combinatorics} 5, no. 3 (1996): 191--244.
\bibitem{GioanLasVergnas:2005}
        Gioan, E., and M. Las Vergnas.
        ``Activity preserving bijections between spanning trees and orientations in graphs.''
        \textit{Discrete Mathematics} 298, no. 1-3 (2005): 169--188. 
\bibitem{Haglund:2008} 
	Haglund, J. 
	\textit{The $q,t$-Catalan numbers and the space of diagonal harmonics.}
	With an appendix on the combinatorics of Macdonald polynomials.  University Lecture Series, 41. 
	American Mathematical Society, Providence, RI, 2008.
\bibitem{Haglund:2003} 
	Haglund, J. 
	``Conjectured statistics for the $(q,t)$-Catalan numbers.'' 
	\textit{Advances in Mathematics} 175 (2003): 319--334.
\bibitem{Knuth:2011} 
	Knuth, D. E.
	\textit{The Art of Computer Programming Volume 4A: Combinatorial Algorithms Part 1.}
	Addison-Wesley, 2011. 
	(Section 7.2.1.6 Exercise 103.)
\bibitem{LerouxRassart:2001} 
	Leroux, P., and \'E. Rassart.  
	``Enumeration of symmetry classes of parallelogram polyominoes.'' 
	\textit{Annales des sciences math\'ematiques du Qu\'ebec} 25, no. 1 (2001): 71--90. 
\bibitem{Merino:1997}
         Merino, C.
         ``Chip Firing and the Tutte polynomial.''
         \textit{Annals of Combinatorics}  1, no. 3 (1997): 253--259.
\bibitem{Narayana:1955} 
	Narayana, V. T. 
	``Sur les treillis form\'es par les partitions d'un entier et leurs applications \`a la th\'eorie des probabilit\'es.''  
	\textit{Comptes rendus de l'Acad\'emie des sciences} 240 (1955): 1188--1189.  
\bibitem{Sloane:2011} 
	Sloane, N. J. A. 
	\textit{The On-Line Encyclopedia of Integer Sequences.}
	Published electronically at http://www.research.att.com/$\sim$njas/sequences/ (2011).
\end{thebibliography}
\end{document}